\newcommand{\cB}{{\mathcal B}}
\newcommand{\cD}{{\mathcal D}}
\newcommand{\cF}{{\mathcal F}}
\newcommand{\cH}{{\mathcal H}}
\newcommand{\cI}{{\mathcal I}}
\newcommand{\cK}{{\mathcal K}}
\newcommand{\cL}{{\mathcal L}}
\newcommand{\cM}{{\mathcal M}}
\newcommand{\cO}{{\mathcal O}}
\newcommand{\cQ}{{\mathcal Q}}
\newcommand{\cR}{{\mathcal R}}
\newcommand{\cS}{{\mathcal S}}
\newcommand{\cU}{{\mathcal U}}
\newcommand{\cZ}{{\mathcal Z}}
\newcommand{\bH}{{\mathbb{H}}}
\newcommand{\sbm}[1]{\left[\begin{smallmatrix} #1
		\end{smallmatrix}\right]}
\newcommand{\biota}{{\boldsymbol \iota}}
\newcommand{\bTheta}{{\boldsymbol{\Theta}}}
\theoremstyle{plain}
\newtheorem{thm}{Theorem}[section]
\newtheorem{corollary}[thm]{Corollary}
\newtheorem{lemma}[thm]{Lemma}
\newtheorem{obs}[thm]{Observation}
\newtheorem{proposition}[thm]{Proposition}
\newtheorem{definition}[thm]{Definition}
\newtheorem{remark}[thm]{Remark}
\newtheorem{example}[thm]{Example}
\numberwithin{equation}{section}
\def\textmatrix#1&#2\\#3&#4\\{\bigl({#1 \atop #3}\ {#2 \atop #4}\bigr)}
\def\dispmatrix#1&#2\\#3&#4\\{\left({#1 \atop #3}\ {#2 \atop #4}\right)}
\numberwithin{equation}{section}
\def\textmatrix#1&#2\\#3&#4\\{\bigl({#1 \atop #3}\ {#2 \atop #4}\bigr)}
\def\dispmatrix#1&#2\\#3&#4\\{\left({#1 \atop #3}\ {#2 \atop #4}\right)}
\begin{document}
\title[Pairs of Commuting Contractions]{Functional Models and Invariant Subspaces for Pairs of Commuting Contractions}
\author[J. A. Ball]{Joseph A. Ball}
\address{Department of Mathematics, Virginia Tech, Blacksburg, VA 24061-0123, USA\\ joball@math.vt.edu}
\author[H. Sau]{Haripada Sau}
\address{Department of Mathematics, Virginia Tech, Blacksburg, VA 24061-0123, USA\\ sau@vt.edu, haripadasau215@gmail.com}
\subjclass[2010]{Primary: 47A13. Secondary: 47A20, 47A25, 47A56, 47A68, 30H10}
\keywords{Sch\"affer dilation, Douglas dilation, And\^o dilation, characteristic function, functional model, invariant subspace}
\thanks{The research of the second author is supported by SERB Indo-US Postdoctoral Research Fellowship, 2017.}

\begin{abstract}  A seminal result in operator theory is the Sz.-Nagy--Foias model theory for a completely nonunitary
Hilbert-space contraction operator $T$:  in short, any completely nonunitary contraction operator $T$ is unitarily
equivalent to its functional model $T(\Theta)$ on a certain functional Hilbert space (just a vectorial Hardy space
over the unit disk in the simplest case) which can be constructed explicitly from the so-called characteristic
function  $\Theta = \Theta_T$ of $T$.  The constructions also incorporate explicit geometric constructions
for the minimal isometric and unitary dilation of the operator $T$.  The goal of the present paper is to push
this theory to the case of a commuting pair of contraction operators $(T_1, T_2)$ having product $T  = T_1 T_2$
which is completely nonunitary.   The idea is to use the Sz.-Nagy-Foias functional model  for $T$ as the model space
also for the commutative tuple ($T_1, T_2)$ with $T = T_1 T_2$ equal to the usual Sz.-Nagy--Foias
model operator, and identify what added structure is required to classify such commutative
contractive factorizations $T = T_1 T_2$ up to unitary equivalence.
In addition to the characteristic function $\Theta_T$, we identify additional invariants
$({\mathbb G}, {\mathbb W})$ which can be used to construct a functional model for the commuting pair
$(T_1, T_2)$ and which have good uniqueness properties:  if two commutative contractive pairs $(T_1, T_2)$
and $(T'_1, T'_2)$ are unitarily equivalent, then their characteristic triples $(\Theta, {\mathbb G}, {\mathbb W})_T$
and $(\Theta, {\mathbb G}, {\mathbb W})_{T'}$ coincide in a natural sense.   We illustrate the theory with
several simple cases where the characteristic triples can be explicitly computed.  This work extends earlier
results of Berger-Coburn-Lebow \cite{B-C-L} for the case where $(T_1, T_2)$ is a pair of commuting isometries,
and of Das-Sarkar \cite{D-S}, Das-Sarkar-Sarkar \cite{D-S-S} and the second author \cite{sauAndo} for the case where $T = T_1T_2$ is pure (the operator sequence $T^{*n}$ tends strongly to $0$).  Finally we use the model to study
the structure of joint invariant subspaces for a commutative, contractive operator pair, extending results of
Sz.-Nagy--Foias for the single-operator case.
 \end{abstract}
\maketitle

\section{Introduction} \label{S:intro}
The starting point for many future developments in nonselfadjoint operator theory was the Sz.-Nagy dilation theorem
from 1953 \cite{sz-nagy}:   {\em if $T$ is a contraction operator on a Hilbert space $\cH$, then there is a
unitary operator $\cU$ on a  larger Hilbert space  $\widetilde \cK \supset \cH$ such that $T^n = P_\cH \cU^n |_\cH$
for all $n=0,1,2,\dots$.}
While the original proofs were more existential than constructive, there followed more concrete constructive proofs
(e.g., the Sch\"affer-matrix construction to be discussed below)  which evolved into a detailed geometric picture
of the dilation space (see \cite[Chapter II]{Nagy-Foias}).  Analysis of how the original Hilbert space $\cH$ fit into
the dilation space $\widetilde \cK$ and the discovery of appropriate transforms to convert the abstract spaces to
spaces of functions (holomorphic or measurable as the case may be) led to the discovery of the
characteristic function $\Theta_T$ of any c.n.u.\ contraction operator $T$ and how the c.n.u.\  contraction
operator $T$ can be represented  (up to unitary equivalence) as a compressed multiplication operator on a
functional-model Hilbert space constructed directly from $\Theta_T$.

The And\^o dilation theorem \cite{ando}, coming ten years later, provides a 2-variable analogue of the
Sz.-Nagy dilation theorem:  {\em given a commuting pair of contraction operators $(T_1, T_2)$ on a
Hilbert space $\cH$, there is a commutative pair of unitary operators $(\cU_1, \cU_2)$ on a larger Hilbert
space $\widetilde \cK \supset \cH$ so that, for all $n,m \ge 0$, $T_1^n T_2^m = P_\cH \cU_1^n \cU_2^m |_\cH$.}
The proof was an ad hoc expanded extension of the Sch\"affer-matrix construction for the single-operator case
which shed no light on the geometry of the dilation space (a consequence of the lack of uniqueness up to a
notion of unitary equivalence for And\^o dilations).  Consequently there has been essentially no follow-up
to the And\^o result in the direction of a Sz.-Nagy--Foias-type model theory for a commuting pair of contraction
operators as there was in the single-operator setting, although there have been some preliminary results
(see \cite{A-M-Dist-Var, BV}).

In an independent development, Berger-Coburn-Lebow \cite{B-C-L} obtained a model for a commutative-tuple
of isometries $(V_1, \cdots, V_d)$ by considering the Wold decomposition for the product $V= V_1 \cdots V_d$ and
understanding what form the factors $V_1, \dots, V_d$ must take in the shift-part of $V$ so as (i) to be themselves
commuting isometries, and (ii) to have product equal to the shift operator $V$.

Much of this paper can be seen as an effort to extend the Berger-Coburn-Lebow results for the case $d=2$
to the setting where the commutative pair of  isometries $(V_1, V_2)$ is replaced by a commutative pair of
contractions $(T_1, T_2)$ such that the product $T = T_1 T_2$ is c.n.u.  We can then place $T$ into its
Sz.-Nagy--Foias functional model determined by the characteristic function $\Theta_T$ of $T$, and look for
the form a pair of operators $(T_1, T_2)$ also defined on the functional model space must have so that (i)
each of $T_1$ and $T_2$ is a contraction operator, and (ii) $T_1 T_2 = T_2 T_1 = T$.  By using recent progress
on  construction of isometric And\^o dilations on a non-minimal extended version of the Sz.-Nagy--Foias
functional model space for the minimal isometric lift of $T$, we are able to identify the invariant
(the characteristic triple $({\mathbb G}, {\mathbb W}, \Theta_T)$---an expanded version of the Sz.-Nagy--Foias
invariant $\Theta_T$) which converts the study of the operator triple $(T_1, T_2, T= T_1 T_2)$ to
function theory on a Sz.-Nagy-Foias model space $(H^2(\cD_{T^*}) \oplus \overline{\Delta_T L^2(\cD_T) })
\ominus \{ \Theta f \oplus \Delta_T f \colon f \in H^2(\cD_T) \} $.
The ingredient ${\mathbb G}$ in the characteristic triple is closely connected to the {\em fundamental operators}
coming up recently in the study of $\Gamma$-contractions (commutative operator pairs having the
symmetrized bidisk as a spectral set \cite{B-P-SR}) as well as in the study of tetrablock contractions
(commutative triples of operators having the tetrablock ${\mathbb E}$ as a spectral set \cite{sir's tetrablock paper}).
Let us mention that earlier of Das-Sarkar-Sarkar \cite{D-S-S} found such a  model for a pair of commuting contractions
and earlier work of the second author \cite{sauAndo} worked out the invariant $({\mathbb G}, \Theta_T)$, but
only for the case where $T = T_1 \cdot T_2$ is {\em pure}  (i.e., SOT-$\lim_{n \to \infty} T^{*n} = 0$), in which case the second component ${\mathbb W}$ of the characteristic triple is vacuous.

The construction of the piece ${\mathbb G}$ for the characteristic triple $({\mathbb G}, {\mathbb W}, \Theta_T)$
for a given commutative contractive pair $(T_1, T_2)$ depends crucially on a collection of spaces and operators
$(\cF,  \Lambda, P, U)$ constructed from $(T_1, T_2)$ which we shall call an {\em And\^o tuple} for $(T_1, T_2)$
(also playing a key role in \cite{sauAndo}).
Let us note that this structure of {\em And\^o tuple} appears at least implicitly already in the original construction by
And\^o of a joint isometric lift for a commutative pair of contractions \cite{ando}.
Let us now discuss the notion of And\^o tuple more precisely.

We first recall the notion of {\em regular factorization} for a general (not necessarily square or commutative)
factorization $T  = T_1 T_2$ where $T \colon \cH \to \cH^{\prime \prime}$, $T_1 \colon \cH' \to \cH^{\prime \prime}$
and $T_2 \colon \cH \to \cH'$ are all contraction operators
with $\cH$, $\cH'$, $\cH^{\prime \prime}$ all possibly different Hilbert spaces.  We let
$$
  D_T = (I - T^* T)^{\frac{1}{2}},  \quad D_{T_1}  = (I - T_1^* T_1)^{\frac{1}{2}}, \quad
  D_{T_2}  = (I - T_2^* T_2)^{\frac{1}{2}}
 $$
 denote the associated defect operators with ranges denoted as
$$
 \cD_T = \overline{\operatorname{Ran}}\, D_T  \subset \cH, \quad
 \cD_{T_1} = \overline{\operatorname{Ran}}\, D_{T_1} \subset \cH',  \quad
  \cD_{T_2} = \overline{\operatorname{Ran}}\, D_{T_2} \subset \cH.
$$
Then the identity
\begin{equation}   \label{id1}
D_T^2 = I - T_2^* T_2 +T_2^* T_2 - T_2^* T_1^* T_1 T_2 = D_{T_2}^2 + T_2^* D_{T_1}^2 T_2
\end{equation}
shows that the operator $\Lambda \colon \cD_T \to \cD_{T_1} \oplus \cD_{T_2}$ defined densely by
\begin{equation}   \label{defLambda}
  \Lambda \colon D_T h = D_{T_1} T_2 h \oplus D_{T_2} h \text{ for all } h \in \cH
\end{equation}
is an isometry from $\cD_T$ into $\cD_{T_1} \oplus \cD_{T_2}$.  In case this isometry is onto
(so $\Lambda$ is in fact a unitary transformation from $\cD_T$ onto $\cD_{T_1} \oplus \cD_{T_2}$), we say that
the contractive factorization $T = T_1 \cdot T_2$ is {\em regular}.

\begin{remark}  \label{R:regfact}
{\rm In addition to the context of And\^o tuples, isometries of the form \eqref{defLambda}
play a key role in characterizing invariant subspaces of a contraction operator $T$ in terms of its
Sz.-Nagy--Foias model.  Indeed, invariant subspaces correspond to factorizations of the characteristic
function $\Theta = \Theta_1 \Theta_2$ (where $\Theta_1$ and $\Theta_2$ are also contractive
analytic operator-valued functions) which are {\em pointwise regular on the unit circle}, i.e., the operator $Z$
defined by
\begin{equation}   \label{Z}
Z \colon D_{\Theta}(\zeta)u \mapsto D_{\Theta_1}(\zeta) \Theta_2(\zeta) h \oplus D_{\Theta_2}(\zeta) h
\end{equation}
which is always an isometry from $\cD_{\Theta(\zeta)} $ into $\cD_{\Theta_1(\zeta)} \oplus \cD_{\Theta_2(\zeta)}$
is actually onto and hence unitary for almost all $\zeta \in{\mathbb T}$ (see \cite[Chapter VII]{Nagy-Foias}).
Whenever this happens, following \cite[Chapter VII]{Nagy-Foias}, we shall say that $\Theta =
\Theta_1 \cdot \Theta_2$ is a {\em regular factorization} of the contractive operator function $\Theta$.
We shall present an extension of Sz.-Nagy--Foias characterization of invariant subspaces to the context of
a commutative pair $(T_1, T_2)$ of contractions in Section \ref{S:invsub} below.
}\end{remark}

We now restrict to the square case where $\cH = \cH' = \cH^{\prime \prime}$ and $T$ has a commutative
contractive factorization $T = T_1 \cdot T_2  = T_2 \cdot T_1$ with $T_1$ and $T_2$ also contraction
operators on $\cH$.   Let us introduce the notation
\begin{align}
& \cD_{U_0} : =\operatorname{clos.}  \{ D_{T_1} T_2 h \oplus D_{T_2} h \colon h \in \cH\} \subset \cD_{T_1} \oplus \cD_{T_2}, \notag \\
& \cR_{U_0}:=\operatorname{clos.}  \{ D_{T_1} h \oplus D_{T_2} T_1 h \colon h \in \cH\} \subset \cD_{T_1} \oplus \cD_{T_2}.
\label{U0dom-codom}
\end{align}
Making use of the assumption that now $T_1$ and $T_2$ commute,  in addition to the identity \eqref{id1}
one can verify the following additional identity:
\begin{equation}   \label{id2}
D_{T_2}^2 + T_2^* D_{T_1}^2 T_2 = D_{T_1}^2 + T_1^* D_{T_2}^2 T_1.
\end{equation}
We conclude that the operator $U_0 \colon \cD_{U_0} \to \cR_{U_0}$
defined densely by
\begin{equation}  \label{U0}
U_0 \colon D_{T_1} T_2 h \oplus D_{T_2} h \mapsto  D_{T_1} h \oplus D_{T_2} T_1 h \text{ for all } h \in \cH
\end{equation}
is unitary.  If
\begin{equation}   \label{equal-codim}
\dim (\cD_{U_0})^\perp =  \dim  (\cR_{U_0})^\perp
\end{equation}
(orthogonal complements with respect to the ambient space $\cD_{T_1} \oplus \cD_{T_2}$),  we can extend
$U_0$ to a unitary operator  $U$
on all of $ \cF:= \cD_{T_1} \oplus \cD_{T_2}$.  In case condition \eqref{equal-codim} fails, we may enlarge the
ambient space $\cD_{T_1} \oplus \cD_{T_2}$
to the space $\cF: = \cD_{T_1} \oplus \cD_{T_2} \oplus \ell^2$ (here $\ell^2$ can be taken to be any separable
infinite-dimensional Hilbert space) with respect to which condition  \eqref{equal-codim} does hold (with value
$\infty$ equal to the common co-dimension), and thereby come up with a unitary extension $U$ of $U_0$
on the larger space $\cF$.  Finally, given $\cF$, $\Lambda$, $U$ constructed
in this way, we let $P$ be the orthogonal projection onto the first component as an operator on $\cF$:
\begin{align*}
 &  P \colon f \oplus g \mapsto f \oplus 0 \text{ in case } \cF = \cD_{T_1} \oplus \cD_{T_2}, \\
 & P \colon f \oplus g \oplus h \mapsto f \oplus 0 \oplus 0 \text{ in case } \cF = \cD_{T_1} \oplus \cD_{T_2}  \oplus \ell^2.
 \end{align*}

\begin{definition} \label{D:Ando-tuple}  {\rm Given a commutative contractive factorization
$T = T_1 \cdot T_2 = T_2\cdot T_1$
for a contraction operator $T$, we say that the collection of spaces and operators $(\cF, \Lambda, P , U)$
is an {\em And\^o tuple} for the pair $(T_1, T_2)$ if it arises via the construction given in the preceding paragraph.
} \end{definition}

\begin{remark} \label{R:Ando-tuple-unique} {\rm Let us note that in general the And\^o tuple depends
nontrivially on the choice of unitary extension $U$ of the partially defined isometry $U_0$.  An And\^o
tuple is uniquely determined from $(T_1, T_2)$ exactly when both $\cD_{U_0}$ and $\cR_{U_0}$
are equal to all of  $\cD_{T_1} \oplus \cD_{T_2}$.  Note that $\cD_{U_0} =  \cD_{T_1} \oplus \cD_{T_2}$
corresponds to $T = T_1 \cdot T_2$
being a regular factorization while the $\cR_{U_0} = \cD_{T_1} \oplus \cD_{T_2}$ corresponds to
$T = T_2 \cdot T_1$ being a regular factorization.  We conclude that {\em an And\^o tuple of the
commutative pair of contractions $(T_1, T_2)$ is uniquely determined exactly when both
$T = T_1 \cdot T_2$ and $T = T_2 \cdot T_1$ are regular factorizations}.   We shall see that in the finite-dimensional
setting it is enough to assume that one of these factorizations is regular while in the infinite-dimensional setting
this is not the case (see Theorem \ref{T:left-right} below.
}\end{remark}

We shall actually need a couple of variations of the notion of And\^o tuple ({\em coordinate-free And\^o tuple
for $(T_1, T_2)$} and {\em coordinate-free And\^o tuple for $(T_1^*, T_2^*)$})---see Definitions \ref{D:Ando-c}
and \ref{D:Ando-c*} below.

The paper is organized as follows.  Following this Introduction,
in Section \ref{S:1D} we review the geometric structure associated with a unitary dilation $\cU$ or isometric lift
$V$ of a contraction operator $T$ from \cite[Chapter II]{Nagy-Foias}, with special additional attention
to the case where $\cU$ or $V$ may not be minimal.  We then explain how particular choices of coordinates
in this structure lead to three distinct functional models for the minimal unitary-dilation or isometric-lift
space associated with the names of Sch\"affer, Douglas, and Sz.-Nagy--Foias; the term {\em functional} is
somewhat loose and strictly applies only to the Sz.-Nagy--Foias case (see  Section \ref{S:epilogue}).
In any case this analysis enables us to find an explicit identification between the Douglas isometric-lift space
and the Sz.-Nagy-Foias isometric-lift space for a given c.n.u.\ contraction operator; this in turn is crucial
for defining the second component ${\mathbb W}$ in a characteristic triple $({\mathbb G}, {\mathbb W}, \Theta_T)$
for a given pair of commuting contraction operators  ($T_1, T_2)$ to come in Section \ref{S:NFmodel}.

Section \ref{S:c} is an attempt to mimic Chapter II of \cite{Nagy-Foias} for the case of a contractive commutative
operator pair $(T_1, T_2)$ in place of a single contraction operator $T$.  We use the existence of
a And\^o isometric lift $(V_1, V_2)$ for a commutative contractive pair $(T_1, T_2)$ together with the
ingredients from modified "coordinate-free" And\^o tuples  to arrive at three functional-model
forms (of Sch\"affer, Douglas, and Sz.-Nagy--Foias type) for an And\^o dilation.
Unlike the single-operator case,
the resulting And\^o  dilations need not have much to do with the original assumed (coordinate-free) dilation,
nor with each other, as the construction depends on a choice of And\^o tuple which in turns depends (except in
the nongeneric case where both the factorization $T = T_1 \cdot T_2 = T_2 \cdot T_1$ are regular)  on
an arbitrary choice of unitary extension $U$ of the partially defined isometry $U_0$.
 While it appears that
the Sch\"affer And\^o isometric lift does not have much to do with the Douglas or Sz.-Nagy--Foias isometric lift
(even when one tries to match the respective unitary operators $U$), it does appear that the Douglas and
Sz.-Nagy--Foias isometric lifts can be arranged to be unitarily equivalent with appropriate matching choices of unitary
extensions $U$ in the respective constructions of an And\^o tuple.
Strictly speaking, the construction here
(based on so-called coordinate-free And\^o tuple for $(T_1, T_2)$ and coordinate-free And\^o tuple for
$(T_1^*, T_2^*)$ and assumed coordinate-free And\^o isometric lift $(V_1, V_2)$ of $(T_1, T_2)$) does
not prove the existence of an And\^o isometric lift; however, the proof can be rearranged, based on
the ``coordinate-dependent" definition of And\^o tuple (Definition \ref{D:Ando-tuple} above) to prove from scratch
the existence of a Sch\"affer-type and Douglas-type And\^o isometric lift; this is done in \cite{sauAndo}.

In Section \ref{S:NFmodel} we introduce the characteristic triple $({\mathbb G}, {\mathbb W}, \Theta_T)$
for a commutative, contractive pair ${\mathbf T} = (T_1, T_2)$ with $T = T_1 \cdot T_2 = T_2 \cdot T_1$,
and show that this has all the invariance properties for the
commutative, contractive pair $(T_1, T_2)$ as the Sz.-Nagy--Foias characteristic function $\Theta_T$
has for a single c.n.u.\ contraction operator $T$.  In particular, there is a functional-model pair of
commutative contractions $({\mathbf T}_1, {\mathbf T}_2)$ acting on the Sz.-Nagy--Foias functional
model space $\cH_{NF}$ such that the original abstract commutative, contractive pair $(T_1, T_2)$
in unitarily equivalent to the concrete functional-model commutative, contractive pair
$({\mathbf T}_1, {\mathbf T}_2)$, and the characteristic triple for $T$ serves as a complete unitary
invariant for $T$.  We include some simple examples of characteristic triples to illustrate the ideas.

The final Section \ref{S:invsub} extends the Sz.-Nagy--Foias analysis of invariant subspaces in the functional
model to the commutative-contractive-pair setting.

Finally, let us mention that we are in the process of extending the framework of this paper to the setting of
commutative $d$-tuples $(T_1, \dots, T_d)$ of contraction operators on a Hilbert space $\cH$
\cite{BSprep}.

\section{Models for a unitary and isometric dilation of a contraction operator}  \label{S:1D}

\subsection{The coordinate-free version}
Let $T$ be a contraction operator on a Hilbert space $\cH$.  We say that the pair $(\widetilde \Pi, \cU)$ is a {\em unitary dilation} of $T$
if (i) $\widetilde \Pi$ is  an isometric embedding of $\cH$ into a Hilbert space $\widetilde \cK$, and (ii) $\cU$ is a unitary operator on
$\widetilde \cK$ such that
$$\widetilde \Pi^*  \cU^n \widetilde \Pi  =  \begin{cases}   T^n &\text{if } n \ge 0, \\
   T^{*n} &\text{if } n < 0, \end{cases}
$$
or, equivalently in local form,  for all $h, h' \in \cH$ and integers $n,m$ we have
$$
\langle \cU^n \widetilde  \Pi h, \, \cU^m  \widetilde \Pi h' \rangle =
\begin{cases} \langle  T^{n-m}  h, \, h' \rangle &\text{if } n \ge m,\\
  \langle h, T^{m-n} h' \rangle &\text{if }  n < m.  \end{cases}
$$
We say that $(\widetilde \Pi, \cU)$ is a {\em minimal unitary dilation} of $T$ if in addition
the smallest $\cU$-reducing subspace of $\widetilde \cK$ containing $\widetilde \Pi \cH$ is all of $\widetilde \cK$.

We say that the pair $(\Pi, V)$ is an {\em isometric dilation} of the contraction operator $T$ if
(i) $\Pi$ is an isometric embedding of $\cH$ into  the Hilbert space $\cK$, and (ii) $V$ is an isometric operator on $\cK$ such that
the dilation property holds:
$$
\Pi^* V^n \Pi = T^n \text{ for } n=0,1,2,\dots,
$$
or, in local form, for all $h,h' \in \cH$ and $n=1,2,3,\dots$ we have
$$
\langle V^n \Pi h,  \Pi h' \rangle = \langle T^n h, h' \rangle.
$$
We say that $(\Pi, V)$ is a {\em minimal isometric dilation} if the smallest invariant subspace if $\cK$ containing $\cH$ is all of $\cK$

In case $(\Pi, V)$ is a minimal
isometric dilation of $T$, then in fact $(\Pi, V)$ is a {\em isometric lift} (after isometric embedding) of $T$, meaning that
$V^*$ is a  {\em coisometric extension} (after isometric embedding) of $T^*$, i.e., $V^*\Pi = \Pi T^*$.
When we speak about isometric dilations, we are usually interested in minimal isometric dilations, and therefore, as is usually done,
we assume at the outset that the isometry is actually a lift.  Thus we abuse the terminology slightly and say that
$(\Pi, V)$ is an
{\em isometric dilation}  of $T$ if (i)     $\Pi$ is an isometric embedding of $\cH$ into a Hilbert space $\cK$, and (ii) $V$ is an isometric
operator on $\cK$ such that $\Pi T^*= V^* \Pi$.  We say that $(\Pi, V)$ is a {\em minimal isometric dilation} of $T$ if in addition
$\cK$ is the smallest invariant subspace of $V$ containing $\Pi \cH$.
We say that two isometric dilations $(\Pi, V)$ and $(\Pi', V')$ are {\em unitarily equivalent} if there is a unitary operator $U$ from $\cK$ to $\cK'$
so that $\Pi' = U \Pi$ and $U V = V' U$.

The classical formulation of unitary/isometric dilation is the case where one takes $\cH \subset \cK  \subset \widetilde \cK$ and
$\Pi = \iota_{\cH \to \cK} \colon \cH \to \cK$ and $\widetilde \Pi = \iota_{\cH \to \widetilde \cK} \colon \cH \to \widetilde \cK$ are the inclusion maps. We can construct
unitary/isometric  dilations of $T$ via e.g.\  the Sch\"affer-matrix construction, so existence of
unitary/isometric dilations is not an issue.  Let us assume that $\cU$ on $\widetilde \cK$ and $V$ on $\cK$ is a
unitary (respectively, isometric) dilation  of $T$ in the classical sense ($\cH \subset \cK \subset \widetilde \cK$ with $\Pi \colon \cH \to \cK$ and
$\widetilde \Pi \colon \cH \to \widetilde \cK$ equal to the respective inclusion maps).
For this case there is a nice coordinate-free description of the geometry behind any minimal unitary/isometric dilation as follows (see \cite[Chapter II]{Nagy-Foias}).

The space $\widetilde \cK$ has two internal orthogonal direct-sum decompositions
\begin{align}
\widetilde \cK & = M(\cL_*) \oplus \cR   \label{decom1} \\
& = M_-(\cL_*) \oplus  \cH \oplus M_+(\cL)  \oplus \cR_0 \label{decom2}
\end{align}
where
\begin{align*}
& \cL_* = \overline{\cU(\cU^* - T^*) \cH} = \overline{(I - \cU T^*) \cH}, \quad  \cL = \overline{(\cU- T) \cH},   \\
& M(\cL_*) = \bigoplus_{n=-\infty}^\infty \cU^n \cL =
M_-(\cL_*) \oplus M_+(\cL_*)
\end{align*}
where we set
\begin{align*}
& M_-(\cL_*) = \bigoplus_{n=-\infty}^{-1} \cU^n \cL_*, \quad  M_+(\cL_*) = \bigoplus_{n=0}^\infty \cU^n \cL_*,  \\
& M_-(\cL) = \bigoplus_{n=-\infty}^{-1} \cU^n \cL, \quad    M_+(\cL) = \bigoplus_{n=0}^\infty \cU^n \cL
\end{align*}
and where
\begin{equation}   \label{cR}
  \cR = \widetilde \cK \ominus M(\cL_*), \quad \cR_0 =  [M_+(\cL_*) \oplus \cR] \ominus [ \cH \oplus M_+(\cL)].
\end{equation}

We note that $\cU|_{M(\cL_*)}$ and $\cU|_{M(\cL)}$ are bilateral shifts while $\cU|_{M_+(\cL_*)}$ and $\cU|_{M_+(\cL)}$
are unilateral shift operators, with respective wandering subspaces equal to $\cL_*$ and $\cL$.

We next collect a few additional properties concerning the geometry of the minimal isometric lift of $T$ of the form
$(\iota_{\cH \to \cK_+}, V)$.

\begin{proposition}  \label{P:dil}
Let $T$ be a contraction operator on a Hilbert space with unitary dilation $\cU$ on $\widetilde \cK$ and isometric dilation $V$ on $\cK$ as described above.
Then:
\begin{enumerate}
\item  The maps
$ \iota_* \colon \cL_* \to \cD_{T^*}$ and $\iota \colon \cL \to \cD_T$ given densely by
$$
  \iota_* \colon (I - V T^*) h \mapsto D_{T^*} h, \quad
  \iota \colon (V - T) h \mapsto D_T h
 $$
 for $h \in \cH$ extend to define unitary identification maps from $\cL_*$ onto $\cD_{T^*}$ and from $\cL$ onto $\cD_T$ respectively.

\item  The projection $P_{\cL_*}$ onto the wandering subspace $\cL_*$  restricted to $\cH$ is given by
\begin{equation}   \label{ortho-proj}
P_{\cL_*}|_\cH=   (I - V T^*)|_\cH.
\end{equation}

\item For $h \in \cH$, the orthogonal projection $P_{M_+(\cL_*)} h$ of $h$ onto $M_+(\cL_*)$ is given by
\begin{equation}   \label{PM+L}
  P_{M_+(\cL_*)} h = \sum_{n=0}^\infty V^n (I - V T^* ) T^{*n} h.
\end{equation}
with
\begin{equation}   \label{proj-norm1}
\| P_{M_+(\cL_*)} h \|^2 = \sum_{n=0}^\infty \| D_{T^*} T^{*n} h \|^2.
\end{equation}

\item For $h \in \cH$ we have
\begin{equation}  \label{proj-norm2}
  \| P_{\cR} h \|^2 =   \langle Q^2 h, h \rangle
\end{equation}
where we set $Q^2 = \operatorname{SOT-lim}_{n \to \infty} T^n T^{*n}$, i.e.,
for all $h \in \cH$,  $\| (T^n T^{*n} - Q^2)h \| \to 0$ as $n \to \infty$.

 \item $\overline{P_\cR \cH}$ is invariant under $V^*$ and  $V^*|_{\overline{P_\cR \cH}}$ is isometric.

\item Either of the following conditions is necessary and sufficient for $\cU$ on $\widetilde \cK$ to be a minimal unitary
dilation of $T$, or
equivalently for $V$ on $\cK$ to be minimal isometric dilation of $T$:

\smallskip

\begin{enumerate}

\item  $\widetilde \cK = M_-(\cL_*) \oplus \cH \oplus M_+(\cL)$ (so the subspace $\cR_0$ in \eqref{cR} is zero), or
$\cK = \overline{ \cup_{n=0}^\infty V^n \cH} = \cH \oplus M_+(\cL)$;

\smallskip

\item  the linear manifold $\bigcup_{n=0,1,2,\dots} V^n P_\cR \cH$ is dense in $\cR$.

\end{enumerate}

\smallskip

\noindent
Furthermore the condition

\begin{enumerate}
\item[(c)] $M(\cL_*) + M(\cL)$ is dense in $\widetilde \cK$
\end{enumerate}

\smallskip

\noindent
is sufficient for the minimality of $(\iota_{\cH \to \widetilde \cK}, \cU)$ as a unitary dilation of $T$ (respectively,  minimality of
$(\iota_{\cH \to \cK}, V)$ as an isometric dilation of $T$).

\smallskip

\item  Suppose that $T$ is completely nonunitary.  Then condition (c) above is both necessary and sufficient for minimality
of $(\iota_{\cH \to \widetilde \cK}, \cU)$ as a unitary dilation of $T$ (respectively, the minimality of
$(\iota_{\cH \to \cK}, V)$ as an isometric dilation of $T$).

In this case the decompositions \eqref{decom1}--\eqref{decom2} simplify to
\begin{equation}  \label{decom3}
\widetilde \cK = M(\cL_*) \oplus \cR = M_-(\cL_*) \oplus \cH \oplus M_+(\cL)
\end{equation}
with corresponding decompositions for the space $\cK: = \widetilde \cK \ominus M_-(\cL_*)$
\begin{equation}   \label{decom4}
\cK = M_+(\cL_*) \oplus \cR = \cH \oplus M_+(\cL),
\end{equation}
and  we have the following density condition:
\begin{equation} \label{cRDeltaTheta}
\overline{ (I - P_{M(\cL_*)}) M(\cL)}  = \overline{ P_\cR M(\cL) }= \cR.
\end{equation}

\end{enumerate}
\end{proposition}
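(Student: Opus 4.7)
I will treat items (1)--(5) as direct defect-operator calculations based on the single identity $V^*|_\cH = T^*$ (plus the unitarity of $\cU$), dispatch (6) via the standard identification of the minimal invariant subspace, and reserve the substantive work for the necessity of condition (c) in (7).

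For (1), expanding $\|(I-VT^*)h\|^2 = \|h\|^2 - 2\,\mathrm{Re}\,\la T^*h, V^*h\ra + \|T^*h\|^2$ and substituting $V^*h = T^*h$ yields $\|D_{T^*}h\|^2$; the symmetric calculation $\|(V-T)h\|^2 = \|D_T h\|^2$ uses $\la Vh, Th\ra = \la V^*Th, h\ra = \|Th\|^2$. Denseness of the ranges is automatic. For (2), the inclusion $(I-VT^*)h\in\cL_*$ is by definition, and $VT^*h \perp \cL_*$ follows from the one-line identity $\la VT^*h, (I-VT^*)g\ra = \la T^*h, T^*g\ra - \la T^*h, T^*g\ra = 0$. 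For (3), orthogonality of the wandering pieces $V^n\cL_*$ combined with (2) applied to $V^{*n}h = T^{*n}h$ gives the formula for $P_{M_+(\cL_*)}h$ and its norm via $\|V^n(I-VT^*)T^{*n}h\|^2 = \|D_{T^*}T^{*n}h\|^2$. Part (4) then follows from $\cH\subset\cK = M_+(\cL_*)\oplus\cR$ via the telescoping identity $\sum_{n\geq 0}\|D_{T^*}T^{*n}h\|^2 = \|h\|^2 - \lim_n\la T^nT^{*n}h, h\ra$. For (5), I would compute
\[
V^*P_{M_+(\cL_*)}h = \sum_{n\geq 1} V^{n-1}(I-VT^*)T^{*n}h = P_{M_+(\cL_*)}T^*h,
\]
the $n=0$ term vanishing because $V^*(I-VT^*) = V^* - T^* = 0$ on $\cH$; this gives $V^*P_\cR h = P_\cR T^*h$, and isometry on $\overline{P_\cR\cH}$ then follows from (4) together with $TQ^2T^* = Q^2$ (immediate from the SOT-limit definition).

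The content of (6) is the identification $\overline{\mathrm{span}}\{V^n\cH : n\geq 0\} = \cH\oplus M_+(\cL)$, which yields (a) $\Leftrightarrow$ minimality: the inclusion $\subset$ comes from iterating $V\cH\subset\cH+\cL$ together with $\cH\perp V^n\cL$ for $n\geq 0$ (both verified by one-line inner-product computations using $V^{*n}|_\cH = T^{*n}$), and the reverse inclusion is immediate since $\cL\subset\overline{\mathrm{span}}\{V^n\cH\}$. For (a) $\Leftrightarrow$ (b), I would use the splitting $\cR = \cR_{\min}\oplus \cR_0$ with $\cR_0\perp \cH$ (since $\cR_0\perp\cK_{\min}\supset\cH$): then $P_\cR\cH = P_{\cR_{\min}}\cH$, and the automatic identity $\overline{\bigcup_n V^n P_{\cR_{\min}}\cH} = \cR_{\min}$ inside the minimal space turns (b) into the condition $\cR_0 = 0$. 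Sufficiency of (c) is analogous at the unitary-dilation level: $\overline{M(\cL_*) + M(\cL)}$ is $\cU$-reducing and contains both $\cL_*$ and $\cL$, hence contains (and by (c) equals) the minimal unitary-dilation subspace.

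The main obstacle is the necessity of (c) in (7). My approach is to set $\cN := \widetilde\cK\ominus \overline{M(\cL_*) + M(\cL)}$ and show $\cN = \{0\}$ by identifying $\cN$ as the unitary part of $T$. Since $M(\cL_*)$ and $M(\cL)$ are both $\cU$-reducing, so is $\cN$. The orthogonality $\cN\perp M(\cL_*)$ places $\cN\subset\cR\subset\cK$, and $\cN\perp M_+(\cL)$ combined with the minimal decomposition $\cK = \cH\oplus M_+(\cL)$ (from (a)) forces $\cN\subset\cH$. For $k\in\cN$: orthogonality to $\cL_*$ and part (2) give $(I-VT^*)k = 0$, so $k = VT^*k\in V\cK$, whence $V^*k = \cU^*k = T^*k\in\cN\subset\cH$; iterating yields $\cU^{-n}k = T^{*n}k$ for all $n\geq 0$. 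Symmetrically, $\cU k\in\cN\subset\cH$, and since $\cU k = Tk + (V-T)k$ with $Tk\in\cH$ and $\cH\perp\cL$, we deduce $(V-T)k = 0$, hence $\cU k = Tk$ and inductively $\cU^n k = T^n k$ for $n\geq 0$. Thus $\cN$ is a $T$-reducing subspace of $\cH$ on which $T$ acts unitarily, and the c.n.u.\ assumption forces $\cN = \{0\}$. The simplified decompositions \eqref{decom3}--\eqref{decom4} and the density \eqref{cRDeltaTheta} then drop out of \eqref{decom1}--\eqref{decom2} together with $\cR_0 = 0$ and $\overline{M(\cL_*) + M(\cL)} = \widetilde\cK$.
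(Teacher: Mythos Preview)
Your argument is essentially correct and tracks the paper's proof closely, with one slip and one genuine variation worth noting.

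The slip is in your sufficiency argument for (c) in part (6): the containment goes the wrong way as written. From ``$\cU$-reducing and contains $\cL_*,\cL$'' you cannot conclude that $\overline{M(\cL_*)+M(\cL)}$ \emph{contains} $\widetilde\cK_{\min}$ (this fails whenever $T$ has a unitary part). The correct observation is that $\cL_*,\cL\subset\cH+\cU\cH\subset\widetilde\cK_{\min}$, so $\overline{M(\cL_*)+M(\cL)}\subset\widetilde\cK_{\min}$ \emph{always}; then (c) forces $\widetilde\cK\subset\widetilde\cK_{\min}$. Equivalently (and this is the paper's phrasing), $\cR_0\perp M(\cL_*)$ and $\cR_0\perp M(\cL)$, so (c) gives $\cR_0=\{0\}$.

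The variation is in part (7). The paper places $k\in\cN$ into $\cH$ exactly as you do, but then computes $\la k,\cU^{-n}(\cU-T)h'\ra=0$ to deduce $(I-T^*T)T^{n-1}k=0$ for all $n\ge1$, putting $k$ in the isometric subspace $\cH_i$, and symmetrically uses $k\perp M_+(\cL_*)$ to put $k$ in the coisometric subspace $\cH_{ci}$, finishing via $\cH_u=\cH_i\cap\cH_{ci}=\{0\}$. Your route instead exploits that $\cN$ is $\cU$-reducing to read off $\cU^nk=T^nk$ and $\cU^{-n}k=T^{*n}k$ directly, exhibiting $\cN$ as a $T$-reducing subspace on which $T=\cU|_\cN$ is unitary. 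Both arguments unpack the same orthogonalities; yours avoids the detour through $\cH_i,\cH_{ci}$ at the cost of needing the (easy) fact that $V^*=\cU^*$ on $\cR$. The paper also cites \cite[Theorem~II.2.1]{Nagy-Foias} for \eqref{cRDeltaTheta}, but as you note it follows immediately from (c) by applying $P_\cR$.
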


\begin{proof}
(1) and (2) follow from elementary computations as in \cite[Chapter II Section 1]{Nagy-Foias}.

As $\cL_*$ is the wandering subspace for $M_+(\cL_*)$,  we have in general that
$P_{M_+(\cL_*)} = \sum_{n=0}^\infty V^n P_\cL V^{*n}$.
Given $h \in \cH$, by making use of the fact that $T^* = V^*|_\cH$ together with the formula \eqref{ortho-proj} for
$P_{\cL_*} |\cH$, the formula \eqref{PM+L} for   $P_{M_+(\cL_*)} h$ follows immediately.
It then follows that
\begin{align*}
   \| P_{M_+(\cL_*)} h \|^2 &  = \sum_{n=0}^\infty \| (I - V T^*) T^{*n} h \|^2 \\
  &  = \sum_{n=0}^\infty \| D_{T^*} T^{*n} h \|^2 \text{ by part (1) above}
\end{align*}
verifying  \eqref{proj-norm1} and  part (3) follows.

As for (4), note that
$\operatorname{SOT-lim}_{n \to \infty} T^n T^{*n}$ exists since $T^n T^{*n}$ is a monotonically decreasing sequence of positive-semidefinite operators
since $T^*$ is a contraction operator and hence we may define a positive-semidefinite operator $Q$ on $\cH$ by
\begin{equation}   \label{Q}
 Q^2 = \operatorname{SOT-lim}_{n \to \infty} T^n T^{*n}.
 \end{equation}
 From \eqref{decom1}--\eqref{decom2} we see that $\cH \subset M_+(\cL_*) \oplus \cR$, and hence, for $h \in \cH$,
 $$
  \| P_\cR h \|^2 = \| h \|^2 - \| P_{M_+(\cL_*)} \|^2.
 $$
 Thus
\begin{align*}
& \| P_\cR h \|^2 = \| h \|^2 - \| P_{M_+(\cL_*)} h \|^2   = \| h \|^2 - \sum_{n=0}^\infty \| D_{T^*} T^{*n} h \|^2  \\
 & \quad \quad  = \| h \|^2 - \lim_{N\to \infty}  \sum_{n=0}^N \langle T^n (I - T T^*) T^{*n} h, h \rangle \\
 & \quad \quad = \| h \|^2 - \left( \| h \|^2 - \lim_{N \to \infty} \langle T^{N+1} T^{*(N+1)} h, h \rangle \right) \\
 & \quad \quad = \lim_{N \to \infty} \langle T^{N+1} T^{*(N+1)} h, h \rangle = \langle Q^2 h, h \rangle
\end{align*}
(4) follows.

\smallskip

As for (5), note that
$$
V^* P_\cR h = P_\cR V^* h = P_\cR T^* h,
$$
we conclude that $\overline{P_\cR \cH}$ is invariant for $V^*$.  Furthermore, since $\cR$ is reducing for $\cU$ we have
$$
 \| V^* P_\cR h \| = \| \cU^* P_\cR h \| = \| P_\cR h \|
$$
and we see that $V^*|_{\overline{P_\cR \cH}}$ is isometric, and (4) follows.

We now discuss the characterizations of minimality in (6).
Note first that any $\cU$-reducing subspace containing $\cH$ must contain $M_-(\cL_*)$, since $\cU^* \cL_* =
\overline{(\cU^* - T^*) \cH}$.  Similarly, any $\cU$-invariant subspace containing $\cH$ (i.e., any $V$-invariant subspace containing $\cH$)
must contain $M_+(\cL)$ since $\cL = \overline{( \cU - T) \cH}$.

From the decompositions
$$
(I - \cU T^*) h = (I - T T^*) h - (\cU - T) T^* h, \quad \cU h = Th + (\cU - T) h
$$
we see that
$$
\cL_* + \cU \cH \subset \cH + \cL.
$$
Similarly, from the decompositions
$$
  h = (I - \cU T^*) h + \cU T^* h, \quad (\cU - T) h = -(I - \cU T^*)T h + \cU (I - T^* T) h
$$
we see that we have the reverse containment
$$
 \cH + \cL \subset \cL_* + \cU \cH.
$$
From the decomposition \eqref{decom2} we see that these decompositions are in fact orthogonal, so we have the identity
of subspaces of $\cK$:
$$
 \cL_* \oplus \cU \cH = \cH \oplus \cL.
$$
We may use this identity to read off that the subspace
$$
  \widetilde \cK_{\rm min}: = M_-(\cL_*) \oplus \cH \oplus M_+(\cL)
$$
is reducing for $\cU$, and that
$$
 \cK_{\rm min}:= \cH \oplus M_+(\cL)
$$
is invariant for $V$.  From the preceding discussion  we know that any reducing subspace for $\cU$ containing $\cH$ must contain
$\widetilde \cK_{\rm min}$ and that any invariant subspace for $V$ containing $\cH$ must contain $\cK_{\rm min}$.  In this way
we see that condition  (a) is necessary and sufficient for minimality.

We have observed above that  $\widetilde \cK_{\rm min}: = M_-(\cL_*) \oplus \cH \oplus M_+(\cL)$
is $\cU$-reducing, and hence $\cR_0 = \widetilde \cK \ominus \widetilde \cK_{\rm min}$ is $\cU$-reducing as well.  From
\eqref{decom2} we see that $\cR_0 \perp M_-(\cL_*)$.  As $\cR_0$ is $\cU$-reducing this forces that in fact
$\cR_0 \perp M(\cL_*)$, i.e. (from \eqref{decom1}), $\cR_0 \subset \cR$.  Thus the characterization
of $\cR_0$ in \eqref{cR} can be rewritten as
$$
  \cR_0 = \{ k \in \cR \colon k \perp \cH \oplus \cM_+(\cL) \}.
$$
From the fact that $\cL = (V - T) \cH$, one can see that
$\cH \oplus M_+(\cL) = \overline{ \cup_{n=0}^\infty V^n \cH}$.  Thus we may rewrite our characterization of $\cR_0$ as
\begin{equation}  \label{cR0}
 \cR_0 = \{ k \in \cR \colon k \perp \bigcup_{n=0}^\infty V^n \cH \}.
\end{equation}

Suppose now that $k \in \cR$ is such that $k \perp \bigcup_{n=0}^\infty V^n P_\cR \cH$. Then $k \in \cR$
is such that for all $h \in \cH$ and  $n=0,1,2,\dots$ we have
$$
0 = \langle k, V^n P_\cR h \rangle = \langle  k, \cU^n P_\cR h \rangle = \langle k, P_\cR \cU^n h \rangle =
\langle k, V^n h \rangle,
$$
i.e.,  $k \in \cR$ is such that $k \perp \bigcup_{n=0}^\infty V^n \cH$, or $k \in \cR_0$ by \eqref{cR0}.
 The argument is reversible:  if
$k \in \cR_0$, then $k$ is orthogonal to
$\overline{\cup_{n=0}^\infty V^n P_\cR \cH}$.  We have thus verified
$$
  \cR_0 =  \{k \in \cR \colon k \perp \bigcup_{n=0}^\infty V^n P_\cR \cH\}.
 $$
In particular, $\cR_0 = \{0\}$ (i.e., $\cU$ is a minimal unitary dilation of $T$ or equivalently $V$ is a minimal isometric lift of $T$
by criterion (a) already proved) if and only if $\bigcup_{n=0}^\infty V^n P_\cR \cH$ is dense in $\cR$, and
it follows that (b) is also a criterion for minimality.

Recall that an argument in the penultimate paragraph above shows that $\cR_0 \subset \cR = \widetilde \cK \ominus M(\cL_*)$,
so we get $\cR_0 \perp M(\cL_*)$.  From the decomposition \eqref{decom2} we see that $\cR_0 \perp M_+(\cL)$; as
$\cR_0$ is $\cU$-reducing, this implies that furthermore $\cR_0 \perp M(\cL)$.  Thus we always have
$$
  \cR_0 \subset ( M(\cL_*) + M(\cL) )^\perp.
$$
In particular, if $M(\cL_*) + M(\cL)$ is dense in $\widetilde \cK$, it follows that $\cR_0 = \{0\}$ and $\cU$ is a minimal unitary dilation
(as well as $V$ is a minimal isometric lift) of the contraction operator $T$.  Thus criterion (c) is sufficient for minimality.
This completes the proof of part (6) of Proposition \ref{P:dil}.

Suppose next that $T$ is completely nonunitary contraction operator and that minimality holds, so
$\cR_0 = \{0 \}$.  Suppose that $k \in \widetilde \cK$ is orthogonal to $M(\cL_*) + M(\cL)$.  In particular, we see that
$k \perp M_-(\cL_*)$ and $k \perp M_+(\cL)$.  From the decomposition \eqref{decom2} with $\cR_0 = \{0\}$, we see that
$k \in \cH$.  As $k$ is also orthogonal to $M_-(\cL)$, we also have, for all $h' \in \cH$ and $n=1,2,3,\dots$,
\begin{align*}
0 & = \langle h, \cU^{*n} ( \cU - T) h' \rangle = \langle \cU^{n-1} h, h' \rangle - \langle \cU^n h, T h' \rangle  \\
& = \langle T^{n-1} h, h' \rangle - \langle T^n h, Th \rangle = \langle (I - T^* T) T^{n-1} h, h' \rangle
\end{align*}
from which we conclude that $(I - T^* T)T^{n-1} h = 0$ for $n=1,2,3,\dots$.  This in turn gives
$$
 \| h \|^2 = \| T h \|^2 = \| T^2 h \|^2 = \dots = \| T^n h \|^2 = \dots,
 $$
 or $h$ is in the isometric subspace for $T$ (the largest invariant subspace $\cH_i$ for $T$ such that
 $T|_{\cH_{\rm i}}$ is isometric.  Similarly $k \in \cH$ and $k \perp M_+(\cL_*)$ implies that
 $k \in \cH_{\rm ci}$ where $\cH_{\rm ci}$ is the largest $T^*$-invariant subspace such that $T^*|_{\cH_{\rm ci}}$ is
 isometric.  Putting all this together means that $h \in \cH_{\rm i} \cap \cH_{\rm ci} =: \cH_{\rm u}$, where $\cH_{\rm u}$
 is the largest $T$-reducing subspace such that $T|_{\cH_{\rm u}}$ is unitary.  The hypothesis that $T$ is
 completely nonunitary amounts to saying that $\cH_{\rm u} = \{0\}$.   Thus finally $k = 0$ and we conclude
 $M(\cL) + M(\cL_*)$ is dense in $\widetilde \cK$.  It follows that criterion (c) is also necessary for minimality
 in case $T$ is completely nonunitary.

 Finally the validity of \eqref{cRDeltaTheta} for the case where $M(\cL_*) + M(\cL)$ is dense in $\widetilde \cK$ is
 part of Theorem II.2.1 in \cite{Nagy-Foias}.  This concludes the proof of part (7) of Proposition \ref{P:dil}.
\end{proof}

We close this section with a useful uniqueness result  for minimal isometric lifts of a c.n.u.\ contraction operator $T$.

\begin{thm} \label{T:min-iso-lift}  Suppose that $T$ is a c.n.u.\ contraction operator on a Hilbert space $\cH$ and that
$(\cK, \Pi, V)$ and $(\cK', \Pi', V')$ are two minimal isometric lifts of $T$.  Then $(\cK, \Pi, V)$ and $(\cK', \Pi', V')$
are unitarily equivalent as isometric lifts of $T$,  i.e., there is a unitary operator $U \colon \cK \to \cK'$ such that
\begin{equation}  \label{define-equiv}
  U \Pi = \Pi', \quad  U V  = V' U,
\end{equation}
and furthermore $U$ so specified is unique.
\end{thm}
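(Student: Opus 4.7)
The plan is the standard uniqueness argument for minimal isometric lifts, adapted to the formulation above. First I would pin down the linear manifold on which $U$ is forced to be defined. Since both $(\cK,\Pi,V)$ and $(\cK',\Pi',V')$ are minimal isometric lifts, part (6)(a) of Proposition \ref{P:dil} gives $\cK = \overline{\bigcup_{n\ge 0} V^n\Pi\cH}$ and $\cK' = \overline{\bigcup_{n\ge 0} V'^n\Pi'\cH}$. (Note the c.n.u.\ hypothesis on $T$ is not actually required for this step; minimality alone suffices.) On these dense linear spans I would try to define
\begin{equation*}
U\colon \sum_i c_i V^{n_i}\Pi h_i \;\longmapsto\; \sum_i c_i V'^{n_i}\Pi' h_i.
\end{equation*}

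The crux of the argument is then to check that this prescription is well defined and isometric. The key observation is that the inner products among the vectors $V^n\Pi h$ depend only on $T$, not on the particular lift: from the lifting relation $V^*\Pi = \Pi T^*$ we obtain $\Pi^* V^n \Pi = T^n$ for $n\ge 0$, hence for $n\ge m$
\begin{equation*}
\langle V^n\Pi h,\,V^m\Pi h'\rangle_\cK = \langle \Pi^* V^{n-m}\Pi h, h'\rangle_\cH = \langle T^{n-m}h,h'\rangle_\cH,
\end{equation*}
with the analogous formula when $m\ge n$. The same identity holds for $(\Pi',V')$ in place of $(\Pi,V)$, so
\begin{equation*}
\Big\langle \sum_i c_i V^{n_i}\Pi h_i,\,\sum_j c_j V^{n_j}\Pi h_j\Big\rangle = \Big\langle \sum_i c_i V'^{n_i}\Pi' h_i,\,\sum_j c_j V'^{n_j}\Pi' h_j\Big\rangle.
\end{equation*}
This gives both the well-definedness (take the vector on the left of $U$ to be zero) and the isometry property.

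Once $U$ is defined and isometric on a dense subspace of $\cK$, it extends uniquely to an isometry $U\colon\cK\to\cK'$. Minimality of $(\cK',\Pi',V')$ shows that the range of $U$ contains the dense linear manifold $\bigcup_{n\ge 0}V'^n\Pi'\cH$, so $U$ is surjective and hence unitary. The intertwining relations in \eqref{define-equiv} then follow by inspection on the dense set: the choice $n=0$ gives $U\Pi h = \Pi' h$, and
\begin{equation*}
UV\cdot V^n\Pi h = U V^{n+1}\Pi h = V'^{n+1}\Pi' h = V' U\cdot V^n\Pi h,
\end{equation*}
which extends to $UV = V'U$ on $\cK$ by continuity.

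For uniqueness, suppose $\widetilde U$ is any unitary from $\cK$ to $\cK'$ satisfying \eqref{define-equiv}. Then on the spanning vectors $V^n\Pi h$ we are forced to have $\widetilde U V^n\Pi h = V'^n \widetilde U\Pi h = V'^n\Pi' h = UV^n\Pi h$, so $\widetilde U = U$ by density. The only potential obstacle is the well-definedness step, and that is disposed of by the universal inner-product identity above; the rest is purely formal.
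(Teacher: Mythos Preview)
Your proof is correct and follows essentially the same route as the paper's: use minimality to identify the dense spanning set $\{V^n\Pi h\}$, verify that the inner products $\langle V^n\Pi h, V^m\Pi h'\rangle$ depend only on $T$, and conclude that the densely defined map $V^n\Pi h \mapsto V'^n\Pi' h$ extends to the required (unique) unitary. Your write-up is in fact more detailed than the paper's, and your observation that the c.n.u.\ hypothesis is not actually used is correct.
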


\begin{proof}   This is an adaptation of Theorem I.4.1 in \cite{Nagy-Foias} where the classical case
is considered.
The minimality of $(\cK, \Pi, V)$  and $(\cK', \Pi', V')$ as isometric lifts of $T$ forces the spaces
 $\cK$ and $\cK'$ to be given by
$$
   \cK = \overline{\operatorname{span}} \{ V^n \Pi \cH \colon n=0,1,2,\dots\}, \quad
    \cK' = \overline{\operatorname{span}} \{ V^{\prime n} \Pi' \cH \colon n=0,1,2,\dots\}.
$$
If $(\cK, \Pi, V)$ is any minimal isometric lift, one can check that
$$
\langle V^n \Pi h, V^m \Pi \widetilde h \rangle_\cK =
\begin{cases}  \langle V^{n-m} \Pi h,  \Pi \widetilde h \rangle_\cK
  = \langle T^{n-m} h, \widetilde h \rangle_\cH & \text{if } n \ge m \ge 0, \\
  \langle h, V^{m-n} \widetilde h \rangle_\cK = \langle h, T^{m-n} \widetilde h \rangle_\cH &\text{if }
  m \ge n \ge 0,  \end{cases}
 $$
 and thus $\langle V^n h, V^m \widetilde h \rangle_\cK$ does not depend on the choice of minimal isometric
 lift.  Thus, if $(\cK, \Pi, V)$ and $(\cK', \Pi', V')$ are two minimal isometric lifts, the map defined densely by
 $$
  U \colon \sum_{n=0}^N V^n \Pi h \mapsto \sum_{n=0}^N V^{\prime n} \Pi' h
 $$
 extends by continuity to a unitary map from $\cK$ to $\cK'$ implementing a unitary equivalence between
 the minimal isometric lifts $(\cK, \Pi, V)$ and $(\cK', \Pi', V')$.  Furthermore from the defining intertwining conditions
 for unitary equivalence, we see that  any such unitary equivalence
 must be of this form.
\end{proof}

\subsection{Function-theoretic models}

We shall assume in the sequel that $T$ is a completely nonunitary contraction operator on $\cH$.

\subsubsection{The Sch\"affer model for the minimal isometric dilation}  \label{S:Schaffer}
The Sch\"affer model is based on the second of the decompositions \eqref{decom4} for the minimal isometric-dilation
space $\cK$ for $T$.  Note that we can extend the unitary identification $\iota \colon \cL \to \cD_{T}$ in part (1) of
Proposition \ref{P:dil} to a unitary
identification $\biota  \colon M_+(\cL) \to H^2(\cD_{T^*})$ according to the formula
\begin{equation}   \label{biota}
  \biota \colon \sum_{n=0}^\infty  V^n \ell_{n} \mapsto \sum_{n=0}^\infty  (\iota \ell_{n}) z^n.
\end{equation}
  Let us write $\cK_{S}$ for the {\em Sch\"affer isometric lift space}
  $$
   \cK_{S} = \begin{bmatrix} \cH \\ H^2(\cD_T) \end{bmatrix}
  $$
  and  $\Pi_S$ for the isometric embedding operator
  $$
  \Pi_S = \sbm{ I_\cH \\ 0 } \colon \cH \to \cK_{S}
  $$
  and let $V_S$ on $\cK_{S}$ be the operator given by
  $$
  V_S \colon \begin{bmatrix} h \\ f \end{bmatrix} \mapsto \begin{bmatrix} T & 0 \\ D_T & M_z \end{bmatrix}
   \begin{bmatrix} h \\ f \end{bmatrix} = \begin{bmatrix} T h \\ D_T h + z f(z) \end{bmatrix}
 $$
 where $D_T$ in the lower left corner of the block matrix $\sbm{T & 0 \\ D_T & M_z}$ is to be interpreted as the operator
 $D_T \colon \cH \to H^2(\cD_T)$ mapping the vector $h \in \cH$ to the constant function $D_T h$ in $H^2(\cD_T)$.
 Then one can check that $V_S$ is isometric on $\cK_{S}$ and that  $\Pi_S T^* = V_S^* \Pi_S$ (due to the block lower-triangular
 form in the matrix representation of $V_S$).  We conclude that
 $(\Pi_S, V_S)$ is an isometric dilation of $T$, called the {\em Sch\"affer-matrix isometric dilation of $T$}.

 Let us next write $U_S \colon \cK  \to \cK_{S}$ for the unitary identification map
  $$
  U_S = \begin{bmatrix} P_\cH \\  \iota P_{M_+(\cL)} \end{bmatrix}.
  $$
  Then one can check the intertwinings
  $$
   \Pi_S = U_S\,  \iota_{\cH \to \cK}.
  $$
   Furthermore, the calculation
 \begin{align*}
  &    U_S V (h + \sum_{n=0}^\infty V^n \ell_n )   = U_S \left( T h + (V-T) h + \sum_{n=0}^\infty V^{n+1} \ell_n \right) \\
      & \quad \quad  = \begin{bmatrix} T h \\ D_T h + M_z \cdot \sum_{n=0}^\infty (\iota \ell_n) z^n \end{bmatrix}
       = \quad \quad \begin{bmatrix}  T & 0 \\ D_T & M_z \end{bmatrix} U_S ( h + \sum_{n=0}^\infty V^n z^n )
\end{align*}
shows that
\begin{equation}   \label{Schaffer-dil}
U_S V = V_S U_S.
\end{equation}
We conclude that $U_S$ implements a unitary equivalence between the minimal coordinate-free isometric dilation
$(\iota_{\cH \to \cK}, V)$ and the Sch\"affer-matrix isometric dilation $(\Pi_S, V_S)$.  As the coordinate-free isometric dilation
$(\iota_{\cH \to \cK_+}, V)$ is minimal, we conclude that the Sch\"affer-matrix isometric dilation $(\Pi_S, V_S)$ is also minimal.

\subsubsection{The Douglas model for the minimal isometric dilation}   \label{S:Douglas}

  We now show how to use the first decomposition  in \eqref{decom4} for the minimal isometric-dilation space of $T$
to arrive at the Douglas model for the  minimal isometric dilation, as derived by Douglas from first principles in \cite{Doug-Dilation}.

Let us now introduce the  extension of the unitary identification  $\iota_* \colon \cL_* \to \cD_{T^*}$ in part (1) of Proposition \ref{P:dil}
 to the unitary identification
$\biota_* \colon M_+(\cL_*) \to H^2(\cL_*)$ given by
\begin{equation}   \label{biota*}
 \biota_* \colon \sum V^n \ell_{*n} \mapsto \sum_{n=0}^\infty (\iota_* \ell_{*n}) z^n.
\end{equation}
By  part (3) of Proposition \ref{P:dil}  (see formula \eqref{PM+L}, we know that, for $h \in \cH$,
$$
P_{M_+(\cL_*)} h  = \sum_{n=0}^\infty V^n (I - V T^*) T^{*n} h
$$
and hence
\begin{equation}   \label{biota*M+L}
\biota_* P_{M_+(\cL_*)} h = \sum_{n=0}^\infty \iota_* (I - V T^*) T^{*n} h =
\sum_{n=0}^\infty (D_{T^*} T^{*n} h) z^n = : \widehat \cO_{D_{T^*}, T^*}(z) h
\end{equation}
where $\widehat \cO_{D_{T^*}, T^*}$ is the frequency-domain observability operator associated with the state/output linear system
$$
\left\{ \begin{array}{rcl} x(t+1) & = & T^* x(t) \\ y(t) & = & D_{T^*} x(t), \end{array}  \right. t = 0, 1, 2, \dots.
$$
From the construction we see that
$$
   \widehat \cO_{D_{T^*}, T^*} T^* h = (M_z)^*  \widehat \cO_{D_{T^*}, T^*} h.
$$
and from formulas \eqref{proj-norm1} and \eqref{isom-id}  we see that
\begin{equation}  \label{isom-id}
  \| \widehat \cO_{D_{T^*}, T^*} h \|^2_{H^2 (\cD_{T^*})}  = \| P_{M_+(\cL_*)} h \|^2, \quad
  \| Q h \|^2 = \langle Q^2 h, h \rangle = \| P_\cR h \|^2
\end{equation}
where $Q^2  = \operatorname{SOT-lim}_{n \to \infty} T^n T^{*n}$.
Let us define the subspace $\cR_0$ of $\cR$ by
$$
\cR_0 =  \overline{P_\cR \cH}.
$$
Hence we can define an isometric map $\omega_D \colon \cR_0 \to \overline{\operatorname{Ran}}\, Q$ by
action on the dense subset $P_\cR \cH \subset \cR_0$ given by
\begin{equation}  \label{omega}
  \omega_D  \colon P_\cR h \mapsto Q h.
\end{equation}

We now note that $T Q^2 T^* = Q^2$; hence the
formula
\begin{equation}  \label{defX}
X^*Qh = Q T^* h
\end{equation}
defines an isometry on $\overline{\operatorname{Ran}} \, Q$ which (if not already unitary) has a
minimal unitary extension on a space $\cR_D \supset \overline{\operatorname{Ran}} \, Q$ which we denote by $(W^{D})^*$.
A dense subspace of $\cR_D$ is $\bigcup_{n=0}^\infty W_D^{n} \operatorname{Ran} Q$ and then the extension $W^*$ is given densely by
\begin{align}\label{IntofQ}
  W_D^* W_D^n Q h = W_D^{n-1} Q h \text{ for } n \ge 1,  W_D^* Q h = X^* Q h = Q T^* h.
\end{align}
As we are assuming that $(\iota_{\cH \to \cK}, V)$ is a minimal isometric lift of $T$, from part  (6b) of Proposition \ref{P:dil} we
see that the linear manifold $\bigcup_{n=0}^\infty V^n P_\cR \cH$ is dense in $\cR$.
Hence we can extend the map $\omega_D$ given by \eqref{omega} to a unitary map, still denoted as $\omega_D$, from all of
$\cR$ onto all of $\cR_D$ densely defined
according to the formula
\begin{equation}   \label{omega-ext}
  \omega_D \colon V^n P_\cR h \mapsto W_D^n Q h \text{ for } n = 0,1,2,\dots.
\end{equation}
One can check that $\omega_D$ satisfies the intertwining relation
$$
  \omega_D (V|_\cR) = W_D \omega_D.
$$
As $\omega_D$ is  unitary, this relation can equivalently be written as
$$
 \omega_D (V^*|_\cR) = W_D^* \omega_D.
$$

Let us introduce the Hilbert space $\cK_D = \sbm{ H^2(\cD_{T^*}) \\ \cR_D}$ and define an  isometric operator
$V_D$ on $\cK_{D}$ by
\begin{align}\label{Vd}
  V_D = \begin{bmatrix} M_z & 0 \\ 0 & W_D \end{bmatrix}.
\end{align}

There is a canonical isometric embedding operator $\Pi_D \colon \cH \to \cK_D$ given by
\begin{align}\label{PiD}
  \Pi_D \colon h \mapsto \begin{bmatrix} \cO_{D_{T^*}, T^*} h \\ Q h \end{bmatrix}.
\end{align}
Furthermore we have the intertwining relation
$$
    \Pi_D T^* = (V_D)^*  \, \Pi_D.
$$
Thus $(\Pi_D, V_D)$ is an isometric dilation of $T$.

We can now define a map $U_D \colon \cK \to H^2(\cD_{T^*}) \oplus \cR_D$ by
$$
  U_D \colon k  \mapsto \biota_* P_{M_+(\cL_*)} k  \oplus \omega_D P_\cR k
$$
or, in operator form with  column notation, $U_D \colon \cK \to \cK_{D}: = \begin{bmatrix} H^2(\cD_{T^*}) \\ \cR_D \end{bmatrix}$
is given by
\begin{align}\label{Ud}
  U_D = \begin{bmatrix} \biota_* P_{M_+(\cL_*)}  \\ \omega_D P_\cR \end{bmatrix}.
\end{align}
For $k \in \cK$, since $\cK = M_+(\cL_*) \oplus \cR$ by the first decomposition in \eqref{decom4}, we have
$\| k \|^2 = \| P_{M_+(\cL_*)} k \|^2 + \| P_\cR k \|^2$. Since $\biota_* \colon M_+(\cL_*)\to H^2(\cD_{T^*})$ and $\omega_D \colon \cR \to \cR_D$
are unitary maps, we then see that $\Pi_D \colon \cK \to \sbm{ H^2(\cD_{T^*}) \\ \cR_D }$  is unitary.
 Furthermore one can check the intertwinings:
\begin{align}\label{Dintwin}
   \Pi_D = U_D \,  \iota_{\cH \to \cK}, \quad    U_D V = V_D U_D.
\end{align}
Thus $(\Pi_D, V_D)$ and $(\iota_{\cH \to \cK}, V)$ are unitarily equivalent as isometric dilations of $T$.  As $(\iota_{\cH \to \cK}, V)$
is minimal, it follows that $(U_D, V_D)$ is minimal as well.

\subsubsection{The Sz.-Nagy--Foias functional model for the minimal isometric dilation} \label{S:SNFmodel}

The Sz.-Nagy--Foias isometric dilation for a completely nonunitary contraction operator can be derived as follows.
Let $(\iota_{\cH \to \cK}, V)$ be the coordinate-free minimal isometric dilation of the completely nonunitary contraction operator
as in Proposition \ref{P:dil}.   Define the operator $\bTheta \colon M(\cL) \to M(\cL_*)$ as the restricted projection
$$
    \bTheta = P_{M(\cL_*)}|_{M(\cL)}.
$$
Let $\biota_* \colon M(\cL_*) \to L^2(\cD_{T^*})$ and $\biota \colon M(\cL) \to L^2(\cD_T)$ be the Fourier representation operators
\begin{equation}   \label{biota-biota*-2sided}
\biota_* \colon \sum_{n=-\infty}^\infty \cU^n \ell_{*n} \mapsto \sum_{n=-\infty}^\infty \ell_{*n} e^{int}, \quad
\biota \colon \sum_{n=-\infty}^\infty \cU^n \ell_{n} \mapsto \sum_{n=-\infty}^\infty \ell_{n} e^{int},
\end{equation}
i.e., the operator  $\biota_*$ in \eqref{biota*} extended in the natural way to a unitary identification of the bilateral-shift space
 $M(\cL)$ to the $L^2$-space $L^2(\cD_{T^*})$ and similarly for the operator $\biota$ in \eqref{biota}).  Then it is easily checked that
 $$
   \bTheta (\cU|_{M(\cL)}) = (\cU|_{M(\cL_*)}) \bTheta.
 $$
 Let $\widehat \bTheta = \biota_* \bTheta \biota \colon L^2(\cD_T) \to L^2(\cD_{T^*})$.  Then the previous intertwining relation becomes
 the function-space intertwining
 $$
    \widehat \bTheta M_{\zeta} = M_{\zeta} \widehat \bTheta.
 $$
 By a standard result (see e.g.\ \cite{Nagy-Foias}, it follows that $\widehat \bTheta$  is a multiplication operator
 $$
    \widehat \bTheta \colon h(\zeta) \mapsto \Theta(\zeta) \cdot h(\zeta)
 $$
 for a measurable $\cL(\cD_T, \cD_{T^*})$-valued function $\zeta \mapsto \Theta(\zeta)$.  As $\bTheta$ is a restricted projection,
 it follows that $\| \bTheta \| \le 1$, and also $\| M_\Theta \| \le 1$ as an operator from $L^2(\cD_T)$ to $L^2(\cD_{T^*})$, from
 which it follows that $\| \Theta(\zeta) \| \le 1$ for almost all $\zeta$ in the unit circle.  Furthermore, from the second decomposition
 in \eqref{decom3}, we see that $M_\Theta$ maps $H^2(\cD_T)$ into $H^2(\cD_{T^*})$; thus in fact $\Theta$ is a contractive
 $H^\infty$-function with values in $\cL(\cD_T, \cD_{T^*})$---known as the {\em Sz.-Nagy-Foias characteristic function} of $T$, i.e.,
 \begin{align}\label{charcfunction}
 \Theta(z)=\Theta_T(z)=-T+zD_{T^*}(I_{\cH}-zT^*)^{-1}D_{T}.
 \end{align}

 Suppose next that $k \in \cR$ has the form $k = P_\cR \ell$ for some $\ell \in M(\cL)$.  Then
 $$
 \| k \|^2  = \| P_\cR \ell \|^2 = \| \ell \|^2 - \| \bTheta \ell \|^2 =
 \| \biota \ell \|^2 - \| \Theta_T \cdot  \biota \ell \|^2 = \| \Delta_{T} \cdot \biota \ell \|^2
 $$
 where we let $\Delta_{T}$ be the $\cD_T$-valued operator function on the unit circle ${\mathbb T}$ given by
 $$
   \Delta_{T}(\zeta) := (I - \Theta_T(\zeta)^* \Theta_T(\zeta))^{1/2}.
 $$
By part (7) of Proposition \ref{P:dil} we know that the space $(I - P_{M(\cL_*)}) M(\cL) = P_\cR M(\cL)$ is dense in $\cR$.
Hence we can define a unitary map $\omega_{NF}$ from $\cR$ to $\overline{ \Delta_{T}L^2(\cD_T) }$
densely defined on $P_\cR M(\cL)$ by
\begin{equation}   \label{omegaNF}
 \omega_{NF} \colon P_\cR \ell \mapsto \Delta_{T} \cdot \biota \ell.
\end{equation}
From this formula we can read off the validity of the intertwining relation
\begin{equation}   \label{intertwine1}
  \omega_{NF} (V|_\cR) = M_{\zeta}\, \omega_{NF}.
\end{equation}

We now can define a unitary identification map $U_{NF}$ from $\cK = M_+(\cL_*) \oplus \cR$ to $\cK_{NF} = \sbm{ H^2(\cD_{T^*}) \\
\overline{ \Delta_{T}L^2(\cD_T) } }$  by
\begin{align}\label{Unf}
  U_{NF} k = \begin{bmatrix} \biota_* P_{M_+(\cL_*)} k \\ \omega_{NF} P_\cR k \end{bmatrix}.
\end{align}
Since $\biota_*$ is unitary from $M_+(\cL_*)$ to $H^2(\cD_{T^*})$, $\omega_{NF}$ is unitary from $\cR$ to
$\overline{ \Delta_{T}L^2(\cD_T) }$, and $\cK$ has the internal orthogonal decomposition
$\cK = M_+(\cL_*) \oplus \cR$, we see that $U_{NF}$ so defined is unitary from $\cK$ onto
$\cK_{NF}$.  Observing the intertwining relation $M_z \biota_*|_{M_+((\cL_*)} = \biota V|_{M_+(\cL_*)}$
and recalling \eqref{intertwine1}, we arrive at the intertwining relation
\begin{equation}   \label{NFintwin}
  U_{NF} V = V_{NF} U_{NF}
\end{equation}
where we set $V_{NF}$ equal to the isometric operator on $\cK_{NF}$ given by
\begin{align}\label{Vnf}
  V_{NF} = \begin{bmatrix} M_z & 0 \\ 0 & M_{\zeta} \end{bmatrix}.
\end{align}
Define the isometric embedding $\Pi_{NF}$ of $\cH$ into $\cK_{NF}$ as
\begin{align}\label{Unf&Pinf}
  \Pi_{NF} = U_{NF} \iota_{\cH \to \cK}.
\end{align}
The space $U_{NF} M_+(\cL)$ can be identified explicitly as follows:
for $\ell \in M_+(\cL)$,
$$ U_{NF} \ell = \begin{bmatrix}  \Theta_T \cdot \biota \ell \\  \omega_{NF} P_\cR \ell \end{bmatrix} =
\begin{bmatrix} \Theta_T \cdot \biota \ell \\ \Delta_{T} \cdot \biota \ell \end{bmatrix}
$$
and hence the space $\cH_{NF} :=\Pi_{NF}\cH = U_{NF} \cH$ is given by
\begin{align}\label{HNF}
  \cH_{NF} = \begin{bmatrix} H^2(\cD_{T^*}) \\ \overline{ \Delta_{T}L^2(\cD_T)} \end{bmatrix}
  \ominus \begin{bmatrix} \Theta_T \\ \Delta_{T} \end{bmatrix} \cdot H^2(\cD_T).
\end{align}
Note that the subspace $U_{NF} M_+(\cL) = \sbm{ \Theta_T \\ \Delta_{T} } \cdot H^2(\cD_T)$ is invariant for
$V_{NF}$ and hence $\cH_{NF}$ is invariant for $V_{NF}^*$.
Rewrite \eqref{NFintwin} in the form
$$
  U_{NF} V^* = V_{NF}^* U_{NF}
$$
and restrict this identity to $\cH$ to arrive at
$$
   \Pi_{NF} T^* = V_{NF}^* \Pi_{NF}.
$$

We now have all the pieces needed to conclude that $(\Pi_{NF}, U_{NF})$ is an isometric lift of $T$ (the
{\em Sz.-Nagy--Foias functional-model isometric lift} of $T$).  Furthermore
the operator $U_{NF} \colon \cK \to \cK_{NF}$ implements a unitary equivalence of the Sz.-Nagy--Foias isometric lift
$(\Pi_{NF}, U_{NF})$ with the coordinate-free minimal isometric lift $( \iota_{\cH \to \cK}, V)$, and hence
the Sz.-Nagy--Foias functional-model isometric lift $(\Pi_{NF}, V_{NF})$ is also minimal.

It is known that any two minimal isometric dilations of a contraction operator are unitarily equivalent. It also follows that the unitary operator involved in the equivalence of two minimal dilations is unique. We conclude this section by finding the explicit unitary involved in the equivalence of the minimal isometric dilations $(\Pi_{D},V_{D})$ and $(\Pi_{NF},V_{NF})$. Let $\omega_D$ and $\omega_{NF}$ be the unitaries defined in (\ref{omega-ext}) and (\ref{omegaNF}), respectively. Define the unitary
\begin{align}\label{DNFintwin}
  U_{\text{min}}:=I_{H^2(\cD_{T^*})}\oplus\omega_{NF}\omega_D^*:H^2(\cD_{T^*})\oplus\cR_D\to H^2(\cD_{T^*})\oplus \overline{\Delta_{T}L^2(\cD_T)}.
\end{align}From equations (\ref{Dintwin}) and (\ref{NFintwin}), we see that the following diagram is commutative:
$$\begin{tikzcd}
(\iota_{\cH \to \cK}, V, M_+(\cL_*)\oplus \cR)\arrow{r}{\biota_*\oplus\omega_D }  \arrow{rd}{\biota_*\oplus\omega_{NF}}
  & (\Pi_D, V_D, H^2(\cD_{T^*})\oplus\cR_D) \arrow{d}{U_{\text{min}}} \\
    &  (\Pi_{NF},V_{NF}, H^2(\cD_{T^*})\oplus\overline{\Delta_{T}L^2(\cD_T)}).
\end{tikzcd}$$
Therefore the unitary $U_{\text{min}}$ intertwines the Douglas model and the Sz.-Nagy--Foias model of the minimal isometric dilation, i.e.,
\begin{align}\label{DNFint}
  U_{\text{min}} V_{D}=V_{NF}U_{\text{min}}
\end{align}where $V_D$ and $V_{NF}$ are as defined in (\ref{Vd}) and (\ref{Vnf}), respectively. It also follows from the first equation in (\ref{Dintwin}) and (\ref{Unf&Pinf}) that
\begin{align}\label{uniqDNF}
U_{\text{min}}\Pi_D=\Pi_{NF}.
\end{align}
This unitary identification will be used in what follows.

\subsubsection{Epilogue: the de Branges-Rovnyak functional model}  \label{S:epilogue}
The term {\em functional model} for the Sch\"affer model (Section \ref{S:Schaffer}) as well for the
Douglas model (Section \ref{S:Douglas}) is a bit loose since the Sch\"affer isometric-lift space
$\cK_S = \cH \oplus H^2(\cD_T)$ has only its second component $H^2(\cD_T)$ as a space of functions
while the first component remains an abstract Hilbert space, and similarly the Douglas isometric-lift space
$\cK_D = H^2(\cD_{T^*}) \oplus \cR_D$ has only its first component $H^2(\cD_{T^*})$ equal to a space of functions
while the second component $\cR_D$ is an abstract Hilbert space.  On the other hand the Sz.-Nagy--Foias
isometric lift space $\cK_{NF} = H^2(\cD_{T^*}) \oplus \overline{\Delta_T L^2(\cD_T)}$  in Section
 \ref{S:SNFmodel}  has both components equal to spaces of functions, the first  component consisting of
 holomorphic functions with the second component consisting of measurable functions.  Indeed, note that
 the transformation $\omega_{NF} \omega_D^{-1} \colon \cR_D \to \overline{\Delta_T L^2(\cD_T)}$ can be viewed
 as a type of Fourier transform implementing a concrete spectral theory for the unitary operator $W_D$
in the following sense:  $\omega_{NF} \omega_D^{-1}$ converts the abstract Hilbert space $\cR_D$
to the space of measurable operator-valued functions $ \overline{\Delta_T L^2(\cD_T)}$ on the unit circle
while at the same time converting the abstract unitary operator $W_D$
to the operator $M_{\zeta}$ of multiplication by the coordinate function on the function space
$ \overline{\Delta_T L^2(\cD_T)}$.
There is yet another functional model space for c.n.u.\ contraction operators, namely that of de Branges-Rovnyak
(see \cite{dBR1, dBR2} for the original work and \cite{BB, BC, BK, NV1, NV2} for later treatments) consisting
of two (in general) coupled components, each of which is a holomorphic function on the unit disk ${\mathbb D}$,
so the study of c.n.u.\ contraction operators is again reduced to pure holomorphic function theory rather than
a hybrid of holomorphic- and measurable-function theory.
We recommend the survey article \cite{BB} as an entrance to this rich topic.

\section{And\^o dilations: two coordinate-free constructions}  \label{S:c}

The goal of this section is to introduce coordinate-free and functional-model formulations for an And\^o isometric
lift for a pair of commuting contractions $(T_1, T_2)$.   However we first add some additional information concerning
An\^o tuples.

\subsection{More on And\^o tuples}
A piece of unfinished business from the Introduction is to complete the discussion of uniqueness of the And\^o tuple
for a given commutative contractive pair $(T_1, T_2)$ (see Remark \ref{R:Ando-tuple-unique}).  We shall also
need more coordinate-free versions of an An

There we saw that the And\^o tuple is unique if and only if both $T_1 \cdot T_2$ and $T_2 \cdot T_1$ is a regular factorization.
The question is whether it suffices to assume that one of these factorizations is regular.  This issue is resolved
by the following result.

\begin{thm}  \label{T:left-right}  Let $(T_1, T_2)$ be a commutative, contractive pair of contraction operators
on a Hilbert space $\cH$.
\begin{enumerate}
\item Assume that all defect spaces $\cD_{T}$, $\cD_{T_1}$,  $\cD_{T_2}$ are finite-dimensional. Then
$T_1 \cdot T_2$ is a regular factorization if and only if $T_2 \cdot T_1$ is a regular factorization.

\item In the infinite-dimensional setting, it is possible for one of the factorizations $T_1 \cdot T_2$ to be regular
while the other $T_2 \cdot T_1$ is not regular.
\end{enumerate}
\end{thm}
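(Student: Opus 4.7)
The plan is to reduce both parts to a surjectivity question for two canonical isometries. From \eqref{defLambda}, define
$$\Lambda \colon D_T h \mapsto D_{T_1} T_2 h \oplus D_{T_2} h \in \cD_{T_1} \oplus \cD_{T_2},$$
whose surjectivity is by definition regularity of $T = T_1 \cdot T_2$, and analogously its ``swapped'' companion
$$\Lambda' \colon D_T h \mapsto D_{T_2} T_1 h \oplus D_{T_1} h \in \cD_{T_2} \oplus \cD_{T_1},$$
whose surjectivity is regularity of $T = T_2 \cdot T_1$. That $\Lambda'$ is an isometry follows from \eqref{id2} in the same way that $\Lambda$'s isometricity follows from \eqref{id1}.

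For part (1), the crucial observation is that an isometry between two finite-dimensional Hilbert spaces is surjective if and only if its domain and codomain have the same dimension. The codomains of $\Lambda$ and $\Lambda'$ both have dimension $\dim \cD_{T_1} + \dim \cD_{T_2}$, and both isometries share the common domain $\cD_T$. Hence surjectivity of $\Lambda$ and surjectivity of $\Lambda'$ are each equivalent to the single scalar equation $\dim \cD_T = \dim \cD_{T_1} + \dim \cD_{T_2}$, which forces the claimed equivalence with no further work.

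For part (2), I propose the following concrete counterexample on $\cH = \ell^2$. Take $T_1 = S$ (the unilateral shift) and $T_2 = cI$ for any scalar with $0 < |c| < 1$; these commute trivially. Since $T_1$ is an isometry, $\cD_{T_1} = \{0\}$, while $D_{T_2} = \sqrt{1-|c|^2}\, I$ gives $\cD_{T_2} = \ell^2$; the product $T = cS$ satisfies $T^*T = |c|^2 I$, so $D_T = \sqrt{1-|c|^2}\, I$ and $\cD_T = \ell^2$. Then $\Lambda$ sends $D_T h \mapsto 0 \oplus D_{T_2} h$, surjecting onto $\{0\} \oplus \ell^2 = \cD_{T_1} \oplus \cD_{T_2}$, so $T = T_1 \cdot T_2$ is regular. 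By contrast $\Lambda'$ sends $D_T h \mapsto \sqrt{1-|c|^2}\, Sh \oplus 0$, whose range is $S\ell^2 \oplus \{0\}$, a proper codimension-one subspace of $\cD_{T_2} \oplus \cD_{T_1} = \ell^2 \oplus \{0\}$ since $S$ is not surjective. Hence $T = T_2 \cdot T_1$ is not regular.

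The only subtlety is isolating the right dichotomy: in finite dimensions the two regularity conditions collapse to a single dimension count, whereas in infinite dimensions an isometry's range may be a proper closed subspace of a codomain of the ``same dimension''. Once this point is seen, part (1) reduces to rank-nullity and part (2) reduces to the short calculation above, so I anticipate no real obstacle.
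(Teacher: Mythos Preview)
Your proof is correct. Part~(1) is the same dimension-count as in the paper, phrased a bit more directly: the paper routes the argument through the intermediate subspaces $\cD_{U_0} = \overline{\operatorname{Ran}}\,\Lambda$ and $\cR_{U_0} = \overline{\operatorname{Ran}}\,\Lambda_r$ of $\cD_{T_1}\oplus\cD_{T_2}$, but the content is identical.

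For part~(2) your counterexample is genuinely different from, and simpler than, the paper's. The paper works on $H^2\oplus H^2$ with
\[
T_1=\begin{bmatrix}0&0\\ I&0\end{bmatrix},\qquad T_2=\begin{bmatrix}T_z&0\\0&T_z\end{bmatrix},
\]
and checks by hand that $\operatorname{Ran}\Lambda$ is $\{0\}\oplus zH^2\subsetneq\{0\}\oplus H^2=\cD_{T_1}\oplus\cD_{T_2}$ while $\operatorname{Ran}\Lambda_r$ fills the whole target. Your choice $(T_1,T_2)=(S,\,cI)$ on $\ell^2$ kills $\cD_{T_1}$ outright, so the question collapses immediately to whether $S$ is onto; this makes the mechanism (an isometry with proper closed range inside a codomain of the same cardinal dimension) completely transparent. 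The paper's example has the minor advantage that neither factor is a scalar, so it shows the phenomenon is not an artifact of one operator being trivial, but for the bare statement of part~(2) your example is cleaner.
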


\begin{proof}[Proof of (1):]
Let us write $\Lambda_r$ for the counterpart of $\Lambda$ when the roles of $T_1$ and $T_2$ are interchanged:
$$
\Lambda_r \colon D_T h \mapsto D_{T_1} T_2 h \oplus D_{T_2} h.
$$
Then by the same computation \eqref{id1} with the role of the indices $(1,2)$ interchanged, we see that
$\Lambda_r$ is an isometry from $\cD_T$ onto $\cR(U_0)$.  Moreover,
 Ii $T_1 \cdot T_2$ is a regular factorization, then
\begin{align*}
 \dim \cD_T & =  \dim \cD(U_0) \text{ (since $\Lambda$ is an isometry from $\cD_T$ onto $\cD_{U_0}$)} \\
 & = \dim (\cD_{T_1} \oplus \cD_{T_2}) \text{ (by regularity of $T_1 \cdot T_2$)} \\
 & \ge \dim \cR_{U_0} \text{ (since $\cR_{U_0} \subset \cD_{T_1} \oplus \cD_{T_2}$) }\\
 & = \dim \cD_T \text{ (since $\Lambda_r$ is an isometry from $\cD_T$ onto $\cR_{U_0}$).}
  \end{align*}
 Thus we see that necessarily the inequality in line 3 must be an equality.  If we are in the finite-dimensional setting
 ($\cD_T$, $\cD_{T_1}$, $\cD_{T_2}$ all finite-dimensional), we necessarily then have
 $\cR_{U_0} = \cD_{T_1} \oplus \cD_{T_2}$ which is the statement that the factorization $T_2 \cdot T_1$
 is also regular.

 \smallskip

 \noindent
 {\em Proof of (2):}
Let $(T_1, T_2)$ be the following pair of contractions on
$\cH: = H^2 \oplus H^2$:
$$
  (T_1, T_2) =\left( \begin{bmatrix} 0 & 0 \\ I & 0 \end{bmatrix}, \begin{bmatrix}T_z & 0 \\ 0 & T_z \end{bmatrix} \right)
$$
where $T_z$ is the Toeplitz operator $f(z) \mapsto z f(z)$ on $H^2$.  Note that
$$
 T_1 T_2 = \begin{bmatrix} 0 & 0 \\ T_z & 0 \end{bmatrix} = T_2 T_1 := T.
$$
Then
\begin{align*}
& D_T^2 = \begin{bmatrix} I_{H^2} & 0 \\ 0 & I_{H^2} \end{bmatrix}  -
\begin{bmatrix} 0 & T_z^* \\ 0 & 0 \end{bmatrix} \begin{bmatrix} 0 & 0 \\ T_z & 0 \end{bmatrix}
 = \begin{bmatrix}  0 & 0 \\ 0 & I_{H^2} \end{bmatrix} = D_T, \quad \cD_T = \begin{bmatrix} 0 \\ H^2 \end{bmatrix} \\
& D_{T_1}^2 = \begin{bmatrix} I & 0 \\ 0 & I \end{bmatrix} - \begin{bmatrix} 0 & I \\ 0 & 0 \end{bmatrix}
\begin{bmatrix} 0 & 0 \\ I & 0 \end{bmatrix} = \begin{bmatrix} 0 & 0 \\ 0 & I \end{bmatrix} = D_{T_1}, \quad
\cD_{T_1} = \begin{bmatrix}  0 \\ H^2 \end{bmatrix}, \\
& D_{T_2}^2 = \begin{bmatrix} I & 0 \\ 0 & I \end{bmatrix} - \begin{bmatrix} T_z^* & 0 \\ 0 & T_z^* \end{bmatrix}
\begin{bmatrix} T_z & 0 \\ 0 & T_z \end{bmatrix} = \begin{bmatrix} 0 & 0 \\ 0 & 0 \end{bmatrix}, \quad
\cD_{T_2} = \{0 \},
\end{align*}
and  $\cD_{T_1} \oplus \cD_{T_2} = \sbm{ 0 \\ H^2}$.

We let $\Lambda$ be the map associated with the factorization $T_1 \cdot T_2$ given by \eqref{defLambda}
while $\Lambda_r$ is the same map associated with the factorization
$T_2 \cdot T_1$ (i.e., \eqref{defLambda} but with the indices and then the components interchanged).
For $h = \sbm{h_1 \\ h_2 } \in \cH = \sbm{ H^2 \\ H^2}$
we compute
$$
\Lambda \colon D_T h = \sbm{ 0 \\ h_2 } \mapsto
D_{T_1} T_2 h \oplus D_{T_2} h = \sbm{ 0 \\ T_z h}
$$
and we conclude that
$$
  \operatorname{Ran} \Lambda = \sbm{ 0 \\ zH^2} \underset{\ne}\subset \sbm{ 0 \\ H^2 } = \cD_{T_1} \oplus
  \cD_{T_2}
$$
implying that $T_1 \cdot T_2$ is not a regular factorization.  On the other hand,
$$
\Lambda_r \colon D_T h = \sbm{ 0 \\ h_2 } \mapsto
D_{T_1} h \oplus D_{T_2} T_1 h = \sbm{ 0 \\ h_2} \oplus 0
$$
from which we see that
$$
\operatorname{Ran} \Lambda_r  =  \sbm{ 0 \\ H^2} = \cD_{T_1} \oplus \cD_{T_2}
$$
implying that the factorization $T_2 \cdot T_1$ is regular.
\end{proof}

 We shall also need the following extension of parts of
Proposition \ref{P:dil} to the setting of a commutative contractive operator-pairs.

\begin{proposition}\label{P:LandL*}
Let $(T_1,T_2)$ be a pair of commuting contractions on a Hilbert space $\mathcal{H}$ and let $(V_1,V_2)$ be
an And\^o commutative  isometric dilation for $(T_1, T_2)$ acting on $\mathcal{K} \supset \cH$. Define the following
subspaces of $\cK$:
\begin{align}\label{L}
&\mathcal{L}_1=\overline{(V_1-T_1)\mathcal{H}},\;\mathcal{L}_2=\overline{(V_2-T_2)\mathcal{H}},\\\label{L*}
&\mathcal{L}_{1*}=\overline{(I-V_1T_1^*)\mathcal{H}},\;\mathcal{L}_{2*}=\overline{(I-V_2T_2^*)\mathcal{H}}.
\end{align}
Then for each $k=1,2$, the maps
$$ \iota_{k*} \colon \cL_{k*} \to \cD_{T_k^*}\text{ and }\iota_k \colon \cL_k \to \cD_{T_k}$$ given densely by
$$
  \iota_{k*} \colon (I - V_k T_k^*) h \mapsto D_{T_k^*} h, \text{ and }
  \iota_k \colon (V_k - T_k) h \mapsto D_{T_k} h
 $$
 for $h \in \cH$ extend to define unitary identification maps from $\cL_{k*}$ onto $\cD_{T_k^*}$ and from $\cL_k$ onto $\cD_{T_k}$ respectively.
\end{proposition}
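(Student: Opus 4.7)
The plan is to reduce the claim to Proposition \ref{P:dil}(1) applied twice, once for each index $k \in \{1,2\}$. The key observation is that, under the convention of Section \ref{S:1D} (where ``isometric dilation'' already means ``isometric lift''), the hypothesis that $(V_1, V_2)$ is an And\^o isometric dilation of $(T_1, T_2)$ forces each coordinate operator $V_k$ individually to be an isometric lift of $T_k$; in particular one has $V_k^*|_{\mathcal{H}} = T_k^*$ for $k=1,2$.

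With this identification in hand, the proof of Proposition \ref{P:dil}(1) carries over verbatim to the single-operator pair $(T_k, V_k)$. Concretely, for $h \in \mathcal{H}$ I would expand
\[
  \|(I - V_k T_k^*) h\|^2 = \|h\|^2 - 2\operatorname{Re} \langle V_k T_k^* h, h\rangle + \|V_k T_k^* h\|^2,
\]
use that $V_k$ is isometric to rewrite $\|V_k T_k^* h\|^2 = \|T_k^* h\|^2$, and use $V_k^*|_{\mathcal{H}} = T_k^*$ to obtain $\langle V_k T_k^* h, h\rangle = \langle T_k^* h, V_k^* h\rangle = \|T_k^* h\|^2$. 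The resulting expression collapses to $\|h\|^2 - \|T_k^* h\|^2 = \|D_{T_k^*} h\|^2$. A parallel computation, using $V_k$ isometric and $V_k^* T_k h = T_k^* T_k h$ for $h \in \mathcal{H}$, gives $\|(V_k - T_k) h\|^2 = \|D_{T_k} h\|^2$.

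These two identities show that $\iota_{k*}$ and $\iota_k$ are well-defined as isometries on the dense linear manifolds $(I - V_k T_k^*)\mathcal{H} \subset \mathcal{L}_{k*}$ and $(V_k - T_k)\mathcal{H} \subset \mathcal{L}_k$, and so extend by continuity to isometries on all of $\mathcal{L}_{k*}$ and $\mathcal{L}_k$ respectively. Surjectivity onto $\mathcal{D}_{T_k^*}$ and $\mathcal{D}_{T_k}$ is then automatic, since the (extended) ranges contain the dense subspaces $D_{T_k^*} \mathcal{H}$ and $D_{T_k} \mathcal{H}$ by construction. In this way each $\iota_{k*}$ and $\iota_k$ is upgraded to a unitary identification, as asserted.

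There is really no obstacle specific to the two-variable setting here: the commutativity relation $V_1 V_2 = V_2 V_1$ plays no role in the argument whatsoever, and everything reduces to invoking Proposition \ref{P:dil}(1) independently in each coordinate. The only point worth making explicit is the convention concerning ``dilation versus lift'' from Section \ref{S:1D}, which is what guarantees that each $V_k$ is itself an isometric lift of the corresponding $T_k$ rather than merely producing the right compressed moments jointly.
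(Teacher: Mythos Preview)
Your proposal is correct and follows essentially the same approach as the paper's own proof: the paper also observes that the result ``amounts to statement (1) of Proposition \ref{P:dil}'' applied coordinatewise and carries out the identical inner-product expansion of $\|(I - V_k T_k^*)h\|^2$ using $V_k^*|_{\cH} = T_k^*$. Your additional remarks on well-definedness, extension by continuity, and surjectivity make explicit what the paper leaves implicit, but there is no substantive difference in method.
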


\begin{proof}  This amounts to statement (1) of Proposition \ref{P:dil} with the single contraction operator
$T$ replaced by the pair of contraction operators $(T_1, T_2)$.
The proof comes down to the  simple inner product computation, for $k=1,2$ and $h\in \mathcal{H}$,
\begin{eqnarray*}
\|(I - V_k T_k^*) h\|^2 &=& \langle (I - V_k T_k^*) h, (I - V_k T_k^*) h \rangle \\
&=&\|h\|^2-\langle V_kT_k^*h,h \rangle -\langle h, V_kT_k^*h \rangle+\|T_k^*h\|^2\\
&=&\|h\|^2-\|T_k^*h\|^2=\|D_{T_k^*}h\|^2.
\end{eqnarray*}
A similar computation shows that $\|(V_k - T_k) h\|^2=\|D_{T_k}h\|^2$.
\end{proof}

We next discuss the following "coordinate-free" adjustment of the
notion of And\^o tuple discussed in the Introduction (see Definition \ref{D:Ando-tuple}).
Using part (1) of Proposition \ref{P:dil} and Proposition \ref{P:LandL*}, we observe that for $h\in\mathcal{H}$,
\begin{align}
\nonumber\|(V-T)h\|^2  =\|D_Th\|^2&=&\|D_{T_1}T_2h\|^2+\|D_{T_2}h\|^2=\|(V_1-T_1)T_2h\|^2+\|(V_2-T_2)h\|^2\\\label{IsoUniGen}
&=&\|D_{T_1}h\|^2+\|D_{T_2}T_1h\|^2=\|(V_1-T_1)h\|^2+\|(V_2-T_2)T_1h\|^2,
\end{align}
which implies that $\Lambda_c:\cL\to\cL_1\oplus\cL_2$ (here the subscript $c$ suggests {\em coordinate-free}) densely defined by
\begin{eqnarray}\label{Lambda-c}
\Lambda_c:(V-T)h\mapsto (V_1-T_1)T_2h\oplus (V_2-T_2)h
\end{eqnarray}
is an isometry.   Let us introduce the notation
\begin{align*}
&  \cD_{U_{c0}} = \operatorname{Ran} \Lambda_c = \operatorname{clos.}
\{ (V_1-T_1)T_2h\oplus (V_2-T_2)h \colon h \in \cH\},  \\
& \cR_{U_{c0}} = \operatorname{clos.} \{(V_1 - T_1) h \oplus (V_2 - T_2) T_1 h \colon
h \in \cH\}.
\end{align*}
Then a consequence of the equality between the fourth and sixth terms in the chain of equalities
\eqref{IsoUniGen} is there is a unitary map $U_{c0} \colon \cD_{U_{c0}} \to \cR{U_{c0}}$ densely defined by
\begin{eqnarray}\label{U-c}
U_{c0} \colon (V_1-T_1)T_2h\oplus (V_2-T_2)h\mapsto (V_1-T_1)h\oplus (V_2-T_2)T_1 h.
\end{eqnarray}
We add an infinite-dimensional summand  $\ell^2$ (if necessary) to the external direct sum
$\cL_1\oplus\cL_2$ to ensure that $U_{c0}$ can be extended to a unitary map $U_c$ on the
spaces $\cF_c: =\cL_1 \oplus \cL_2$ (in case the $\ell^2$-summand is not required) or $\cF_c: =
\cL_1 \oplus \cL_2 \oplus \ell^2$ (otherwise).  Let $P_c$ denote the orthogonal projection of $\cF_c$ onto $\cL_1$:
\begin{align}
& P_c \colon f \oplus g \mapsto f \oplus 0 \text{ if } \cF_c = \cL_1 \oplus \cL_2,  \notag \\
& P_c \colon f \oplus g \oplus h  \mapsto f \oplus 0 \oplus 0 \text{ if } \cF_c = \cL_1 \oplus \cL_2 \oplus \ell^2.
\label{Pc}
\end{align}

\begin{definition}  \label{D:Ando-c}
{\em{The tuple $(\cF_c,\Lambda_c,P_c,U_c)$ of Hilbert spaces and operators arising as above from a commutative
contractive pair $(T_1, T_2)$ having commutative isometric dilation $(V_1, V_2)$ as above will be called a{ \em{coordinate-free And\^o tuple}} for $(T_1,T_2)$.}}
\end{definition}

It will also be useful to spell out the notion of {\em coordinate-free And\^o tuple for $(T_1^*, T_2^*)$}.
Via a computation similar to what was done in \eqref{IsoUniGen} we see that
\begin{align}
\|(I-VT^*)h\|^2 &=\|(I-V_1T_1^*)T_2^*h\|^2+\|(I-V_2T_2^*)h\|^2 \notag  \\
&=\|(I-V_1T_1^*)h\|^2+\|(I-V_2T_2^*)T_1^*h\|^2.
 \label{IsoUniGen*}
\end{align}
We conclude in particular that there is an isometry $\Lambda_{c*}:\cL_*\to \cL_{1*}\oplus\cL_{2*}$
densely defined as
\begin{equation} \label{Gamma-c*}
\Lambda_{c*}:(I-VT^*)h\mapsto (I-V_1T_1^*)T_2^*h\oplus (I-V_2T_2^*) h.
\end{equation}
Introduce the notation
\begin{align*}
& \cD_{U_{c*0}}  = \operatorname{Ran} \Lambda_{c*} =  \operatorname{clos} \{ (I-V_1T_1^*)T_2^*h
\oplus (I-V_2T_2^*) h \colon h \in \cH\}, \\
& \cR_{U_{c*0}} = \operatorname{clos} \{ (I - V_1 T_1^*) h \oplus (I - V_2 T_2^*) T_1 h \colon h \in \cH\}.
\end{align*}
Then the last equality in the chain of equalities \eqref{IsoUniGen*} implies that
there is unitary $U_{c*0} \colon \cD_{U_{c*0}} \to \cR_{U_{c*0}}$ densely defined by
$$
U_{c*0} \colon (I-V_1T_1^*)T_2^*h \oplus (I-V_2T_2^*) h  \mapsto
(I - V_1 T_1^*) h \oplus (I - V_2 T_2^*) T_1 h.
$$
 Add an infinite dimensional summand to $\cL_{1*}\oplus\cL_{2*}$ if necessary to guarantee that the
isometry $U_{c0*}$ viewed as an operator from $\overline{\operatorname{Ran}}\, \Lambda_{c*}$ into $\cF_{c*}$
can be extended to a unitary operator, denotes as $U_{c*}$, on $\cF_{c*}$, where
$\cF_{c*}$ is either $\cL_{1*} \oplus \cL_{2*}$, in case $\dim (\cD_{U_{c*0}})^\perp =
\dim (\cR_{U_{c*0}})^\perp$ (viewed as subspaces of the ambient space $\cL_{1*} \oplus \cL_{2*}$,
 and $\cF_{c*} =
\cL_{1*} \oplus \cL_{2*} \oplus \ell_+^2$ otherwise.
We fix a choice of such a unitary extension  $U_{c*}$ of the isometry \eqref{Gamma-c*} and denote it also as $U_{c*}$.
We denote by
$P_{c*}$ the orthogonal projection of $\cF_{c*}$ onto its first component, given by
\begin{align*}
& P_{c*} \colon f \oplus g \mapsto f \oplus 0 \text{ in case } \cF_{c*} = \cL_{1*} \oplus \cL_{2*}, \\
& P_{c*} \colon f \oplus g \oplus h \mapsto f \oplus 0 \oplus 0 \text{ in case }
\cF_{c*} = \cL_{1*} \oplus \cL_{2*} \oplus \ell^2.
\end{align*}
We then make the following formal definition.

\begin{definition}  \label{D:Ando-c*}
{\rm Let $(T_1, T_2)$ be a commutative pair of contractions on $\cH$.
Any collection of spaces $(\cF_{c*}, \Lambda_{c*},   P_{c*}, U_{c*} )$ arising via the construction as described in the
previous paragraph will be called a {\em coordinate-free And\^o tuple for $(T_1^*, T_2^*)$}.}
\end{definition}

\begin{remark}   \label{R:Ando-tuple}
{\rm  We point out here that Remark \ref{R:Ando-tuple-unique}  applies equally well to the coordinate-free
And\^o tuples in Definitions \ref{D:Ando-c} and \ref{D:Ando-c*}.   Each construction of an And\^o tuple
 involves an arbitrary choice of unitary extension
of an in general only partially defined isometry ($U_{0}$, $U_{c0}$,and $U_{c0*}$ respectively).
Hence we cannot expect any result to the effect that
And\^o tuples are unique up to a unitary equivalence connecting the various operators and spaces except
in the case when  (or $\cD_{U_0}$ and $\cR_{U_0}$ is the whole space $\cD_{T_1} \oplus \cD_{T_2}$
(respectively, $\cD_{U_{c0}}$ and $\cR_{U_{c0}}$ is the whole space $\cL_1 \oplus \cL_2$,
respectively $\cD_{U_{c*0}}$ and $\cR_{U_{c*0}}$ is the whole space $\cL_{1*} \oplus \cL_{2*}$).
One can use the unitary identification maps $\iota_j$ and $\iota_{j*}$ ($j = 1, 2$) to see that this holds
for one of the three cases (i.e., Definitions \ref{D:Ando-tuple}, \ref{D:Ando-c}, and \ref{D:Ando-c*})
if and only if holds for the all the cases.  As observed in Remark \ref{R:Ando-tuple-unique}, this happens if and only if
both $T = T_1 \cdot T_2$ and $T = T_2 \cdot T_1$ are regular factorizations.
}\end{remark}

\subsection{The  Sch\"affer coordinate-free model for an And\^o dilation}
Let $(T_1,T_2)$ be a pair of commuting contractions on a Hilbert space $\mathcal{H}$ and $(V_1,V_2)$ be an isometric dilation of $(T_1,T_2)$ acting on $\mathcal{K}$. Note that it is highly unlikely for the lifting $V=V_1V_2$ of $T=T_1T_2$ to be minimal, because the space $\mathcal{K}$ contains
$$
\overline{\text {span}}\{V_1^mV_2^n h:h\in\mathcal{H}\text{ and }m,n\geq 0 \},
$$which, in general, is bigger than the resulting space when we choose $m=n$ above. Therefore by part (6) of Proposition \ref{P:dil},
the dilation space $\mathcal{K}$ should, in general, be of the form
\begin{eqnarray}\label{CFdilspace}
\cH \oplus M_+(\cL)  \oplus \cR_0,
\end{eqnarray}
where $\cL = \overline{(V- T) \cH}$ and for some non-zero $\mathcal{R}_0$. Unlike the
single-variable case, there is no canonical way of constructing a minimal isometric dilation of a pair $(T_1,T_2)$ of commuting contractions. As a consequence, construction of an And\^o dilation requires
a non-zero and noncanonical choice of  subspace $\mathcal{R}_0$.
We  fix a choice of coordinate-free And\^o tuple
$(\cF_c, \Lambda_c, P_c, U_c)$ for $(T_1, T_2)$ (see Definition \ref{D:Ando-c}) and then choose
\begin{eqnarray}\label{R0}
\cR_0=\ell^2_+(\cF_c\ominus\Lambda_c\cL).
\end{eqnarray}
Consider the following identification of the space in (\ref{CFdilspace}):
\begin{align}\label{CFLspace}
\cK_c:=\cH \oplus \ell^2_+(\Lambda_c \cL)  \oplus \cR_0=\cH \oplus \ell^2_+(\Lambda_c \cL)  \oplus  \ell^2_+(\cF_c\ominus\Lambda_c\cL)=\cH\oplus \ell^2_+(\cF_c).
\end{align}

Below we show that this choice of $\cR_0$ works, i.e., it is possible to find an And\^o dilation on a
space of the form (\ref{CFdilspace}) with $\cR_0$ as in (\ref{R0}).
Let $S$ denote the forward shift on $\ell^2$. Then $S\otimes I_{\cF_c}$ is the forward shift on $\ell^2_+(\cF_c)
\cong \ell^2_+ \otimes \cF_{c}$. Define $\tilde{V}$ on $\cK_c$ by the following $2\times2$ block operator matrix:
\begin{eqnarray}\label{tildeV}
\tilde{V}:=    \begin{bmatrix}
      T & 0 \\  \Lambda(V-T) & S\otimes I_{\cF_c}\\ \end{bmatrix} \colon
     \begin{bmatrix} \cH  \\ \ell^2_+(\cF_c) \end{bmatrix} \to  \begin{bmatrix} \cH \\ \ell^2_+(\cF_c) \end{bmatrix}.
\end{eqnarray}
A matrix computation verifies that $\tilde{V}$ is an isometry.

We next find a commuting pair $(\tilde{V}_1,\tilde{V}_2)$ of isometries on $\cK_c$ such that $(\tilde{V}_1, \tilde{V}_2)$ is a lift of
$(T_1,T_2)$ and $\tilde{V}_1\tilde{V}_2=\tilde{V}$. The requirement that  $(\tilde{V}_1,\tilde{V}_2)$ to be a lift of $(T_1,T_2)$ forces $(\tilde{V_1}, \tilde{V}_2)$ to have  the form
$$
\tilde{V}_1=\begin{bmatrix}
      T_1 & 0 \\
      C_1 & D_1 \\
     \end{bmatrix}
\text{ and }
\tilde{V}_2=  \begin{bmatrix}
      T_2 & 0 \\
      C_2 & D_2 \\    \end{bmatrix}.
$$
From the fact that the pair $(D_1,D_2)$ is the restriction of $(\tilde{V}_1,\tilde{V}_2)$ to ${\ell^2_+(\cF_c)}$, we see that $(D_1, D_2)$ is a commuting pair of isometries.
Moreover, the condition that  $\tilde{V}_1\tilde{V}_2=\tilde{V}$ forces $(D_1,D_2)$ to satisfy $D_1D_2=S\otimes I_{\cF_c}$. Hence by the result of Berger-Coburn-Lebow \cite{B-C-L}, there exists a projection $P$ and a unitary $U$ on $\cF_c$ such that
$$
(D_1,D_2)=(I_{\ell^2_+}\otimes P^\perp U+S\otimes PU,I_{\ell^2_+}\otimes U^*P+S\otimes U^*P^\perp ).$$
Since $\tilde{V}_1\tilde{V}_2=\tilde{V}$, we have $\tilde{V}_1=\tilde{V}_2^*\tilde{V}$ and $\tilde{V}_2=\tilde{V}_1^*\tilde{V}$, which then give
\begin{align}\label{1*2}
\begin{cases}
\text{(a) } C_1=(I_{\ell^2_+}\otimes U^*P+S\otimes U^*P^\perp)^*\Lambda_c(V-T)=PU\Lambda_c(V-T),  \\
\text{(b) } C_2=(I_{\ell^2_+}\otimes P^\perp U+S\otimes PU)^*\Lambda_c(V-T)=U^*P^\perp\Lambda_c(V-T), \\
\text{(c) } T_1-T_2^*T=(V-T)^*\Lambda_c^*P^\perp U\Lambda_c(V-T),  \text{ and} \\
\text{(d) } T_2-T_1^*T=(V-T)^*\Lambda_c^*U^*P\Lambda_c(V-T).
\end{cases}
\end{align}
We do not know if one can embed an arbitrary projection $P$ and unitary $U$ into an And\^o dilation,
but we show in the following theorem that if  any $(P,U)$ of the form $(P_c, U_c)$ with $P_c$ and $U_c$
as in the coordinate-free And\^o tuple can be embedded in an And\^o dilation for $(T_1, T_2)$.
%
%
%
%Let $(\cF, \Lambda, P, U)$ be an And\^o tuple for $(T_1, T_2)$ as in Definition \ref{D:Andotuple}. Then in particular $\cD_{T_1} \oplus \cD_{T_2}
%\subset \cF$ and
%$$
%  \Lambda DT h = D_{T_1} T_2 h \oplus D_{T_2} h.
%$$
%Using the unitary identification established in Proposition \ref{P:LandL*} again, we get
%\begin{align}\label{LambdaInt}
%\Lambda \iota=(\iota_1\oplus \iota_2)\Lambda_c.
%\end{align}Using this relation we get from (\ref{1*2})(c),
%\begin{eqnarray*}
%T_1-T_2^*T&=&(V-T)^*\Lambda_c^*P^\perp U\Lambda_c(V-T)\\
%&=&(V-T)^*\iota^*\iota\Lambda_c^*P^\perp U\Lambda_c\iota^*\iota(V-T)\\
%&=&D_T\iota\Lambda_c^*P^\perp U\Lambda_c\iota^*D_T\\
%&=&D_T\Lambda^*(\iota_1\oplus \iota_2)P^\perp U(\iota_1\oplus \iota_2)^*\Lambda D_T=D_T\Lambda^*P_c^\perp U_c\Lambda D_T,
%\end{eqnarray*}which implies that
%\begin{align}\label{PU1}
%\Lambda^*(\iota_1\oplus \iota_2)P^\perp U(\iota_1\oplus \iota_2)^*\Lambda=\Lambda^*P_c^\perp U_c\Lambda.
%\end{align} Similarly working with (\ref{1*2})(d) and proceeding in the same way we get
%\begin{align}\label{PU2}
%\Lambda^*(\iota_1\oplus \iota_2)U^*P(\iota_1\oplus \iota_2)^*\Lambda=\Lambda^*U_c^*P_c\Lambda.
%\end{align}

\begin{thm}
For a pair $(T_1,T_2)$ of commuting contractions, let $(\cF_c,\Lambda_c,P_c,U_c)$ be a coordinate-free And\^o
tuple for $(T_1,T_2)$. The the pair $(V_{1c},V_{2c})$ defined on $\cH\oplus \ell^2_+(\cF_c)$ by
\begin{align}\label{CFSchaf}
\begin{cases}
V_{1c}&= \begin{bmatrix}
      T_1 & 0 \\
      P_cU_c\Lambda_c(V-T) & I_{\ell^2_+}\otimes P_c^\perp U_c+S\otimes P_cU_c  \end{bmatrix}  \text{ and}\\
V_{2c}&= \begin{bmatrix}
      T_2 & 0 \\
      U_c^*P_c^\perp\Lambda_c(V-T) &I_{\ell^2_+}\otimes U_c^*P_c+S\otimes U_c^*P_c^\perp \end{bmatrix}
   \end{cases}
\end{align}
is an isometric dilation of $(T_1,T_2)$.
\end{thm}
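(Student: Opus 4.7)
The plan is to verify that $V_{1c}$ and $V_{2c}$ (a) are isometries on $\cK_c=\cH\oplus\ell^2_+(\cF_c)$, (b) commute, with product equal to $\tilde V$ in \eqref{tildeV}, and (c) form a lift of $(T_1,T_2)$, i.e., $V_{ic}^*|_\cH=T_i^*$. Property (c) is immediate from the block lower-triangular form $V_{ic}=\sbm{T_i & 0 \\ C_i & D_i}$, so the real work is (a) and (b). Throughout, the engine driving every nontrivial identity is the defining intertwining of $U_c$ from \eqref{U-c},
\begin{equation*}
U_c\bigl((V_1-T_1)T_2 h\oplus(V_2-T_2)h\bigr)=(V_1-T_1)h\oplus(V_2-T_2)T_1 h\quad(h\in\cH),
\end{equation*}
together with the unitarity identities $U_cU_c^*=U_c^*U_c=I$ and the resolution $I=P_c+P_c^\perp$.

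First I would verify (a). In the $2\times 2$ block computation of $V_{ic}^*V_{ic}$, the $(2,2)$ entry $D_i^*D_i=I$ is part of the Berger--Coburn--Lebow construction, so the task reduces to the defect identity $C_i^*C_i=D_{T_i}^2$ and the orthogonality $C_i^*D_i=0$. For $i=1$ the defining intertwining yields $C_1 h=P_c U_c \Lambda_c(V-T)h=(V_1-T_1)h\oplus 0\in\cL_1\subset\cF_c$ sitting in the zeroth slot, whence $\|C_1 h\|=\|(V_1-T_1)h\|=\|D_{T_1}h\|$ by Proposition \ref{P:LandL*}; the orthogonality $C_1^*D_1=0$ collapses, after extracting the zeroth coordinate of $D_1$, to $P_c P_c^\perp=0$. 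The case $i=2$ is parallel: unitarity of $U_c^*$ gives $\|C_2 h\|=\|P_c^\perp\Lambda_c(V-T)h\|=\|(V_2-T_2)h\|=\|D_{T_2}h\|$, and $C_2^*D_2=0$ collapses to $P_c^\perp P_c=0$.

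Next, for (b), I would verify $V_{1c}V_{2c}=V_{2c}V_{1c}=\tilde V$ block-by-block. The $(1,1)$ and $(1,2)$ entries are trivial, and the $(2,2)$ entries $D_1D_2=D_2D_1=S\otimes I_{\cF_c}$ follow by a direct BCL computation using $P_c+P_c^\perp=I$ and $U_cU_c^*=U_c^*U_c=I$. The only substantive identity is the $(2,1)$ entry. For $V_{1c}V_{2c}$ this reads $C_1T_2+D_1C_2=\Lambda_c(V-T)$: applying the defining intertwining of $U_c$ to $\Lambda_c(V-T)T_2 h$ gives $C_1T_2 h=(V_1-T_1)T_2 h\oplus 0$, while $U_cU_c^*=I$ gives $D_1C_2 h=0\oplus(V_2-T_2)h$, whose sum is $\Lambda_c(V-T)h$. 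For $V_{2c}V_{1c}$ this reads $C_2T_1+D_2C_1=\Lambda_c(V-T)$: the defining intertwining in the form $P_c^\perp\Lambda_c(V-T)T_1 h=P_c^\perp U_c\Lambda_c(V-T)h$ gives $C_2T_1 h=U_c^*P_c^\perp U_c\Lambda_c(V-T)h$, which together with $D_2C_1 h=U_c^*P_cU_c\Lambda_c(V-T)h$ and the identities $P_c^\perp+P_c=I$, $U_c^*U_c=I$ recovers $\Lambda_c(V-T)h$.

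The main obstacle is organizing the $(2,1)$ entries in (b). What makes them delicate is that $U_c$ is canonically determined only on $\cD_{U_{c0}}$, its values on the orthogonal complement being an arbitrary unitary extension; hence there is no closed-form expression for $U_c$ or $U_c^*$ on generic vectors such as $0\oplus(V_2-T_2)h$. The proof succeeds because one can always arrange intermediate quantities so that $U_c$ and $U_c^*$ appear either on vectors forced into $\cD_{U_{c0}}$ (where the defining intertwining is explicit) or as adjacent pairs $U_cU_c^*$ or $U_c^*U_c$ that cancel. The BCL formulas for $D_1$ and $D_2$ are arranged so that these cancellations align precisely with the decomposition $I=P_c+P_c^\perp$, which is the structural source both of the defect identities in (a) and of the commutativity identities in (b).
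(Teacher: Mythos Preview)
Your proposal is correct and follows essentially the same approach as the paper's proof: both reduce the isometry of $V_{ic}$ to the defect identity $\|C_i h\|=\|D_{T_i}h\|$ together with $C_i^*D_i=0$ (the latter via $P_cP_c^\perp=0$), and both establish commutativity by computing the $(2,1)$-entries of $V_{1c}V_{2c}$ and $V_{2c}V_{1c}$ using the intertwining \eqref{U-c} plus the cancellations $U_cU_c^*=U_c^*U_c=I$ and $P_c+P_c^\perp=I$. The only organizational difference is that the paper packages the two $(2,1)$-entry verifications into a single chain of equalities passing through $\Lambda_c(V-T)h$, whereas you treat them separately; the content is identical.
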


\begin{proof}
The pair $(V_{1c},V_{2c})$ is clearly a dilation of $(T_1,T_2)$ because it is a co-extension. We first show that the $(2,1)$-entry in the matrix representation of both $V_{1c}V_{2c}$ and $V_{2c}V_{1c}$ is
$\Lambda_c (V-T)$, i.e.,
\begin{align*}
&P_cU_c\Lambda_c (V-T)T_2+(I_{\ell^2_+}\otimes P_c^\perp U_c+S\otimes P_cU_c)U_c^*
P_c^\perp\Lambda_c (V-T)\\
=&U_c^*P_c^\perp\Lambda_c (V-T)T_1+(I_{\ell^2_+}\otimes U_c^*P_c+S\otimes U_c^*P_c^\perp)
P_cU_c\Lambda_c (V-T)=\Lambda_c (V-T).
\end{align*}
This is true because for all $h\in \mathcal H$ we see that
\begin{align*}
&P_cU_c\Lambda_c (V-T)T_2+(I_{\ell^2_+}\otimes P_c^\perp U_c+S\otimes P_cU_c)U_c^*P_c^\perp\Lambda_c (V-T)h\\
& \quad =P_cU_c\Lambda_c (V-T)T_2+P_c^\perp \Lambda_c (V-T)h\\
& \quad = \Lambda_c (V-T)h\\
&  \quad = U_c^*U_c((V_1-T_1)T_2h\oplus (V_2-T_2)h)\\
& \quad = U_c^*((V_1-T_1)h\oplus 0)+U_c^*(0\oplus (V_2-T_2)T_1h)\\
& \quad = U_c^*P_cU_c\Lambda_c (V-T)h+U_c^*P_c^\perp\Lambda_c (V-T)T_1h\\
& \quad = U_c^*P_c^\perp\Lambda_c (V-T)T_1h+(I_{\ell^2_+}\otimes U_c^*P_c+S\otimes U_c^*P_c^\perp) P_cU_c\Lambda_c (V-T)h.
\end{align*}
This proves the commutativity part because
\begin{align*}
&(I_{\ell^2_+}\otimes P_c^\perp U_c+S\otimes P_cU_c)(I_{\ell^2_+}\otimes U_c^*P_c+S\otimes U_c^*P_c^\perp)\\
=&(I_{\ell^2_+}\otimes U_c^*P_c+S\otimes U_c^*P_c^\perp)( I_{\ell^2_+}\otimes P_c^\perp U_c+S\otimes P_cU_c)=S\otimes I_{\cF_c}.
\end{align*}

It remains to show that $V_{1c}$ and $V_{2c}$ are isometries.
%$$
%V=\left(
%     \begin{array}{cc}
%      I & 0 \\
%      0 & I\otimes \Lambda \\
%     \end{array}
%   \right)V_S\left(
%     \begin{array}{cc}
%      I & 0 \\
%      0 & I\otimes \Lambda^* \\
%     \end{array}
%   \right).
%$$
A simple matrix computation shows that $V_{1c}$ is an isometry if and only if the following equalities hold:
\begin{align}
T_1^*T_1+(V-T)\Lambda_c^*(U_c^*P_c)(U_c^*P_c)^*\Lambda_c (V-T)=I_{\mathcal H}
\label{proof-2-1}\\
(V-T)\Lambda_c^*(U_c^*P_c)(I_{\ell^2_+}\otimes P_c^\perp U_c+S\otimes P_cU_c)=0.
\label{proof-2-2}
\end{align}
The first equality is true because for every $h,h'\in \mathcal H$,
\begin{eqnarray*}
\langle P_cU_c\Lambda_c (V-T)h,P_cU_c\Lambda_c (V-T)h' \rangle=\langle (V_1-T_1)h\oplus 0,(V_1-T_1)h'\oplus 0\rangle =\langle D_{T_1}^2h,h' \rangle.
\end{eqnarray*}
As for the second equality, note that our conventions are such that the operator  $U_c^*P_c$ appearing
in the second equality \eqref{proof-2-2} is short hand for the operator
$ P_{\{e_0\}} \otimes U_c^* P_c$ acting on the spaced $\ell^2_+(\cF_c) \cong  \ell^2_+ \otimes \cF_c$.
Let us compute
\begin{align*}
(U_c^* P_c) (I_{\ell^2_+} \otimes P_c^\perp U_c + S \otimes P_c U_c) := &
(P_{\{ e_0\}} \otimes U_c^* P_c) (I_{\ell^2_+} \otimes P_c^\perp U_c + S \otimes P_c U_c) \\
=& P_{\{e_0\}} \otimes U_c^* P_c P_c^\perp U_c + ( P_{\{e_0\}} S \otimes U_c^* P_c U_c)  \\
=& P_{\{e_0\}} \otimes 0 + 0 \otimes U_c^* P_c U_c = 0.
\end{align*}
Hence also
$$
(V-T) \Lambda_c^* \cdot (U_c^* P_c) (I_{\ell^2_+} \otimes P_c^\perp U_c + S \otimes P_c U_c) = 0.
$$
Thus \eqref{proof-2-2} follows as well and we conclude that $V_{1c}$ is an isometry.
 Similarly one can show that $V_{2c}$ is an isometry too. This completes the proof.
% if and only if the following equalities hold true:
%\begin{eqnarray}\label{proof-3}
%T_2^*T_2+(V-T)\Lambda_c^*(P_c^\perp U_c)(P_c^\perp U_c)^*\Lambda_c (V-T)=I_{\mathcal H} \text{ and } (V-T)\Lambda_c^*(P_c^\perp U_c)I_{\ell^2_+}\otimes U_c^*P_c+S\otimes U_c^*P_c^\perp=0.
%\end{eqnarray}
%Note that for every $h,h'\in \mathcal H$, we have
%\begin{eqnarray*}
%\langle (P_c^\perp U_c)^*\Lambda_c (V-T)h,(P_c^\perp U_c)^*\Lambda_c (V-T)h' \rangle=\langle 0\oplus D_{T_2}h,0\oplus D_{T_2}h'\rangle =\langle D_{T_2}^2h,h' \rangle,
%\end{eqnarray*} and for every $\zeta\in\mathcal F$, $n\geq 0$, we have
%\begin{eqnarray*}
%\langle (V-T)\Lambda_c^*(P_c^\perp U_c)I_{\ell^2_+}\otimes U_c^*P_c+S\otimes U_c^*P_c^\perp (z^n\otimes\zeta),h \rangle=\langle z^{n+1}\otimes (P_c^\perp U_c)(P_c^\perp U_c)^*(\zeta), \Lambda_c (V-T)h \rangle_{H^2\otimes \mathcal F}=0,
%\end{eqnarray*}proving that $V_{2c}$ is an isometry too. Hence the proof is complete.
\end{proof}

\subsection{The Douglas coordinate-free model for an And\^o dilation}
We now work with the first decomposition of the minimal isometric dilation space as in (\ref{decom4}). As before we first must find a non-zero $\cR_0$ such that the And\^o dilation space is given by
\begin{align}\label{CFDspace*}
\cK=M_+(\cL_*)\oplus \cR\oplus \cR_0,
\end{align}where $\cR$ is the minimal component, i.e., by part 6(b) of Proposition \ref{P:dil},
$$
\cR=\overline{\operatorname{span}}\{V^n P_\cR h: n\geq 0,\;h\in\cH\}.
$$
For this construction we make use of a coordinate-free And\^o tuple $(\cF_{c*}, \Lambda_{c*}, P_{c*}, U_{c*})$ for $(T_1^*, T_2^*)$,
 (see Definition \ref{D:Ando-c*}).
We then choose
$$
\cR_0 = \ell^2_+(\cF_{c*}\ominus\Lambda_{c*}\cL_{*})
$$
and consider the following identification of the space given in (\ref{CFDspace*})
$$
\cK = \ell^2_+(\Lambda_{c*} \cL_*) \oplus \cR \oplus \ell^2_+(\cF_{c*}\ominus\Lambda_{c*}\cL_{*})
=\ell^2_+(\cF_{c*}) \oplus \cR.
$$

Let the operators $Q$, $W_D$ be as in the discussion around \eqref{isom-id} --\eqref{IntofQ}.
Let $S_{\cF_{c*}}$ denote the forward shift on $\ell^2_+(\cF_{c*})$; as was the case
above where coefficient spaces
other than $\cF_{c*}$ came up, when we view $\ell^2_+(\cF_{c*})$ as $\ell^2_+ \otimes \cF_{c*}$, then
we may view $S_{\cF_{c*}}$ as equal to $S \otimes I_{\cF_{c*}}$.
Define $V^D_c$ on $\ell^2_+(\cF_{c*})\oplus\cR$ as
\begin{align}\label{VDc}
V^D_c=S_{\cF_{c*}}\oplus W_D
\end{align}and $\Pi_{D}^c:\cH\to \ell^2_+(\cF_{c*})\oplus\cR$ as
\begin{align}\label{PiDc}
  \Pi_D^ch=\sum_{n=0}^{\infty}S_{\cF_{c*}}^n E_0^* \Lambda_{c*}(I-VT^*)T^{* n}h \oplus Qh
\end{align}
where $E_0^* \colon \cF_{c*} \to \ell^2_+(\cF_{c*})$ is the embedding of $\cF_{c*}$ into the $0$-component
$\{f \oplus 0 \oplus 0 \oplus 0 \oplus \cdots \colon f \in \cF_{c*}\}$ of $\ell^2_+(\cF_{c*})$.

The following computation shows that $V^D_c$ is a lift of $T$, where we use the second equation in (\ref{IntofQ}):
\begin{align*}
\Pi_D^cT^*h & =\sum_{n=0}^{\infty} S_{\cF_{c*}}^n  E_0^* \Lambda_{c*} (I-VT^*)T^{* n+1}h \oplus QT^*h\\
& = S^*_{\cF_{c*}} \left( \sum_{n=0}^\infty S_{\cF_{c*}}^{n+1} E_0^* \Lambda_{c*} (I - V T^*) T^{* n+1} h \right)
\oplus Q T^* h \\
&=S_{\cF_{c*}}^* \left( \sum_{n=1}^{\infty} S_{\cF_{c*}}^n  E_0^* \Lambda_{c*} (I-VT^*)T^{* n}h \right)
   \oplus W_D^*Qh \\
  & = S_{\cF_{c*}}^* \left( \sum_{n=0}^\infty
  S_{\cF_c*}^n E_0^* \Lambda_{c*} (I - V T^*) T^{*n} h \right)  \oplus W_D^* Q h   = (V^{D}_c)^*\Pi_D^c h.
\end{align*}

We shall find an And\^o dilation $(V_{c1}^D,V_{c2}^D)$ of $(T_1,T_2)$ on $\ell^2_+(\cF_{c*})
\oplus\cR$ such
that $V_{c1}^DV_{c2}^D=V^D_c$. To that end, we do further analysis of the operator $Q$
defined as the positive square root of $Q^2 : = \operatorname{SOT-}
\lim_{n \to \infty} T^nT^{* n}$, where here $T=T_1T_2=T_2T_1$. We first note that $Q$
has the following additional properties
\begin{eqnarray}\label{prop1}
T_1Q^2T_1^* \preceq Q^2 \text{ and }T_2Q^2T_2^* \preceq Q^2,
\end{eqnarray}the first one of which follows easily from the following inner product computation for every $h\in\mathcal{H}$ (the proof for the second one is similar):
\begin{eqnarray*}
\langle T_1Q^2T_1^*h,h\rangle = \lim_n\langle T^n(T_1T_1^*){T^*}^nh,h \rangle \leq \lim_n\langle T^n{T^*}^nh,h\rangle = \langle Q^2h,h\rangle.
\end{eqnarray*}We now recall the following simple but telling result of Douglas.

\begin{lemma}[Douglas Lemma, \cite{Douglas}]\label{DougLem}
Let $A$ and $B$ be two bounded operators on a Hilbert space $\mathcal{H}$. Then there exists a contraction $C$ such that
$A=BC$ if and only if $$AA^*\preceq BB^*.$$
\end{lemma}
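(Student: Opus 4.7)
The plan is to prove the two implications separately, using the observation that range inclusion and norm domination are essentially the same condition for Hilbert space operators.

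For the forward (easy) direction, I would assume $A = BC$ with $\|C\| \leq 1$. Since $C$ is a contraction, $CC^* \preceq I_{\mathcal H}$, and conjugating this inequality by $B$ yields
\[
AA^* = B(CC^*)B^* \preceq BB^*,
\]
which is exactly what is wanted. No cleverness is required here.

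For the reverse direction, I would construct $C$ by first building its adjoint densely on $\operatorname{Ran} B^*$. Specifically, I would define an operator $D$ by
\[
D \colon B^*h \mapsto A^*h \qquad (h \in \mathcal H).
\]
The key point is that the hypothesis $AA^* \preceq BB^*$ delivers
\[
\|A^*h\|^2 = \langle AA^*h,h\rangle \le \langle BB^*h,h\rangle = \|B^*h\|^2,
\]
which simultaneously (i) makes $D$ well-defined on $\operatorname{Ran} B^*$ (if $B^*h = B^*h'$ then $B^*(h-h')=0$, forcing $A^*(h-h')=0$), and (ii) shows that $D$ is contractive on its domain. I would then extend $D$ by continuity to the closure $\overline{\operatorname{Ran}\, B^*}$ and by zero on the orthogonal complement $(\overline{\operatorname{Ran}\, B^*})^\perp = \ker B$, obtaining a contraction on all of $\mathcal H$. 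Setting $C := D^*$ and taking adjoints in the relation $D B^* = A^*$ gives $B C = B D^* = A$, while $\|C\| = \|D\| \le 1$.

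The proof has no genuine obstacle; the only thing to watch is the double duty done by the inequality $\|A^*h\| \le \|B^*h\|$, which simultaneously handles well-definedness and boundedness, together with the standard orthogonal-decomposition trick to extend from a dense subspace of $\overline{\operatorname{Ran}\, B^*}$ to all of $\mathcal H$ without inflating the norm.
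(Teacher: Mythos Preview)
Your proof is correct and follows the standard argument for this classical result. The paper itself does not supply a proof of this lemma; it simply states it and refers the reader to Douglas's original paper \cite{Douglas} for the proof and a more general version.
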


See the paper \cite{Douglas} for a general version of the above lemma. Inequalities in (\ref{prop1}) and Lemma
 \ref{DougLem} imply that there exist two contractions say $X_1^*$ and $X_2^*$ such that
\begin{eqnarray}\label{X1X2}
X_1^*Q=QT_1^* \text{ and } X_2^*Q=QT_2^*.
\end{eqnarray}
From the equalities in (\ref{X1X2}) and (\ref{defX}) it is clear that $X_1$ and $X_2$ commute and that
\begin{eqnarray}
X^*=X_1^*X_2^*.
\end{eqnarray}
Since $X^*$ is an isometry, both $X_1^*$ and $X_2^*$ are isometries, as a consequence of the general fact that,
whenever $T$ is an isometry with factorization $T = T_1 T_2$ for some commuting contractions $T_1$ and $T_2$,
then in fact $T_1$ and $T_2$ are also isometries; indeed
consider the following norm equalities which we have already seen in
the discussion of And\^o tuples (Definition \ref{D:Ando-tuple}):
$$
\|D_{T_1}T_2h\|^2+\|D_{T_2}h\|^2=\|D_Th\|^2=\|D_{T_1}h\|^2+\|D_{T_2}T_1h\|^2 \text{ for all }h \in \mathcal{H}.
$$
Also, note that the same is true if the word `isometry' is replaced by `unitary' because the above equalities hold for every contraction, in particular, for $T_1^*$ and $T_2^*$ also. We now recall the following result from \cite{sauAndo}.

\begin{lemma}\label{special-ext}
Let $\underline{V}=(V_1,V_2,\dots, V_n,V)$ be a commuting tuple of isometries on a Hilbert space $\mathcal{H}$ such that $V=V_1V_2\cdots V_n$, then $\underline{V}$ has a commuting unitary extension $\underline{Y}=(Y_1,Y_2,\dots, Y_n,Y)$ such that $Y=Y_1Y_2\cdots Y_n$ is the minimal unitary extension of $V$.
\end{lemma}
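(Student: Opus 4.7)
The plan is to construct $Y$ first as the minimal unitary extension of the isometry $V=V_1\cdots V_n$ on a space $\cK\supset\cH$, and then to define each $Y_j$ on a dense subspace of $\cK$ by forcing commutativity with $Y^{-1}$. Recall that the minimal unitary extension space is
$$
  \cK=\overline{\bigcup_{k\ge 0} Y^{-k}\cH},
$$
with the inner product determined by $\la Y^{-k}h_1,Y^{-k'}h_2\ra = \la V^{k'-k}h_1,h_2\ra$ whenever $k'\ge k$. On the linear manifold $\cM:=\bigcup_{k\ge 0} Y^{-k}\cH$, define
$$
  Y_j(Y^{-k}h):=Y^{-k}V_jh,\qquad h\in\cH,\ k\ge 0,\ j=1,\dots,n.
$$

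The first step is to verify that each $Y_j$ is well-defined and isometric on $\cM$. For $h_1,h_2\in\cH$ and $k\le k'$, since $V_j$ commutes with $V$ and is an isometry,
$$
  \la Y^{-k}V_jh_1,Y^{-k'}V_jh_2\ra=\la V^{k'-k}V_jh_1,V_jh_2\ra=\la V_j^*V_jV^{k'-k}h_1,h_2\ra=\la V^{k'-k}h_1,h_2\ra,
$$
which equals $\la Y^{-k}h_1,Y^{-k'}h_2\ra$. Hence $Y_j$ extends by continuity to an isometry on all of $\cK$, still denoted $Y_j$. Moreover, $Y_j|_\cH=V_j$ by taking $k=0$.

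Next I would verify the algebraic relations on the dense subspace $\cM$. The commutativity of the $V_j$'s gives immediately that $Y_iY_j(Y^{-k}h)=Y^{-k}V_iV_jh=Y_jY_i(Y^{-k}h)$; similarly $Y_jY(Y^{-k}h)=Y_j(Y^{-k+1}h)=Y^{-k+1}V_jh=YY_j(Y^{-k}h)$. Finally,
$$
  Y_1Y_2\cdots Y_n(Y^{-k}h)=Y^{-k}V_1V_2\cdots V_nh=Y^{-k}Vh=Y\cdot Y^{-k}h,
$$
so $Y_1\cdots Y_n=Y$ on $\cM$ and hence on $\cK$.

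The last step is to upgrade each $Y_j$ from an isometry to a unitary. Because $Y=Y_1\cdots Y_n$ is unitary, its adjoint $Y^*=Y_n^*\cdots Y_1^*$ is also a unitary, hence in particular an isometry, expressed as a product of commuting contractions $Y_j^*$. Applying the general fact recalled just before Lemma \ref{special-ext} (that if a product of commuting contractions is an isometry, then each factor is an isometry), each $Y_j^*$ must be an isometry, so each $Y_j$ is in fact unitary. The main technical point, and the place where care is needed, is the well-definedness and isometric nature of $Y_j$ on $\cM$; once this is in place, the algebraic identities and the passage from isometry to unitary are essentially automatic via the already established fact about factorizations of isometries.
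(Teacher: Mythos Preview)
The paper does not actually prove Lemma~\ref{special-ext}; it is quoted from \cite{sauAndo} without argument. Your proof is correct and is essentially the standard construction one would expect: realize the minimal unitary extension $Y$ of $V$ on $\cK=\overline{\bigcup_{k\ge0}Y^{-k}\cH}$, define $Y_j$ on the dense increasing union $\bigcup_k Y^{-k}\cH$ by forcing commutation with $Y^{-1}$, verify via the inner-product computation that this is well defined and isometric (your computation indeed suffices, since preservation of pairwise inner products extends by sesquilinearity to linear combinations), and then upgrade the resulting commuting isometries to unitaries using the factorization fact recorded just before the lemma. One small simplification of your last step: the paper also notes that the ``isometry $\Rightarrow$ isometric factors'' fact holds with \emph{unitary} in place of \emph{isometry}; applying this directly to $Y=Y_1\cdots Y_n$ avoids the detour through adjoints.
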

%
%\begin{proof}
%The proof is merely an application of Berger-Coburn-Lebow \cite{B-C-L} model theory for commuting isometries. One first considers
%the Wold decomposition (\cite{vonN-Wold}, \cite{Wold}) of the isometry $V=V_1V_2$, i.e., $$\mathcal{H}=H^2(\mathcal{D}_{V^*})
%\oplus \mathcal{H}_u, \text{ where } \mathcal{H}_u=\bigcap_{n=0}^\infty V^n\mathcal{H},$$ and with respect to this decomposition,
%$V=M_z\oplus W$, where $M_z$ is the forward shift on $H^2(\mathcal{D}_{V^*})$ and $W=V|_{\mathcal{H}_u}$ is unitary.
%Berger-Coburn-Lebow proved that with respect to this decomposition of the space $\mathcal{H}$, one obtains
%$$
%(V_1,V_2,V_1V_2)=(M_{P^\perp U+zPU}\oplus W_1,M_{U^*P+zU^*P^\perp} \oplus W_2 ,M_z\oplus W)
%$$
%for some projection $P$ and unitary $U$ in $\mathcal{B}(\mathcal{D}_{V^*})$.
%Now clearly the following pair of unitary operators on $L^2(\mathcal{D}_{V^*})\oplus\mathcal{H}_u$
%$$
%(Y_1,Y_2):=(M_{P^\perp U+\zetaPU}\oplus W_1,M_{U^*P+\zetaU^*P^\perp}\oplus W_2)
%$$
%is a commuting unitary extension of $\underline{V}=(V_1,V_2)$. Moreover, $Y=Y_1Y_2=M_{\zeta}\oplus W$ on $L^2(\mathcal{D}_{V^*})\oplus\mathcal{H}_u$ is easily seen to be the minimal unitary extension of $V=M_z\oplus W$.
%%  \begin{eqnarray*}
%%\overline{\text{span}}\{M_{\zeta}^nf\oplus W^ng: f\in \mathcal{D}_{V^*},g\in\mathcal{H}_u,n\in\mathbb{Z}\}&=&L^2(\mathcal{D}_{V^*})\oplus\mathcal{H}_u.
%%  \end{eqnarray*}
%  %$$
%%  \underline{U}:=(M_{U_1P_1^\perp+\zetaU_1P_1}\oplus W_1,M_{U_2P_2^\perp+\zetaU_2P_2}\oplus W_2,\dots,M_{U_nP_n^\perp+\zetaU_nP_n}\oplus W_n,M_{\zeta}\oplus W)
%%  $$
%\end{proof}

By this lemma we get a commuting unitary extension $(W_{\partial1}^*,W_{\partial2}^*)$ on $\mathcal{R}_D$ of $(X_1^*,X_2^*)$ such that $W_D^*=W_{\partial1}^*W_{\partial2}^*$ is the minimal unitary extension of $X^*=X_1^*X_2^*$, equivalently, $W_D$ is the minimal isometric dilation of $X$, hence
\begin{eqnarray}\label{TheSpace-R}
\mathcal{R}_D=\overline{\text{span}}\{W_D^{ n}x: x\in \overline{\operatorname{Ran}}\,Q\text{ and }n\geq0\}.
\end{eqnarray}
The lemma below shows that the pair $(W_1,W_2)$ depends uniquely on $(T_1,T_2)$.

\begin{lemma}
Let $(T_1,T_2)$ on $\mathcal{H}$ and $(T_1',T_2')$ on $\mathcal{H'}$ be two pairs of commuting contractions and $(W_{\partial1},W_{\partial2})$ on $\mathcal{R}_D$ and $(W_{\partial1}',W_{\partial2}')$ on $\mathcal{R}_D'$ be the respective pairs of commuting unitaries obtained from them as above. If $(T_1,T_2)$ is unitarily equivalent to $(T_1',T_2')$ via the unitary similarity $\phi \colon \mathcal{H} \to \mathcal{H'}$, then $(W_{\partial1},W_{\partial2})$ and $(W_{\partial1}',W_{\partial2}')$ are unitarily equivalent via the induced unitary transformation $\tau_\phi \colon \mathcal{R}_D\to \mathcal{R}_D'$ determined by $\tau_\phi \colon W_D^nQh \to W_D'^n Q'\phi h$.  In particular, if $(T_1, T_2) = (T_1', T_2')$, then
$(W_{\partial1},W_{\partial2}) = (W_{\partial1}',W_{\partial2}')$.
\end{lemma}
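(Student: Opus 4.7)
The plan is to transport every object in the construction from $(T_1,T_2)$ to $(T_1',T_2')$ along $\phi$ and verify compatibility at each stage: $Q \to Q'$, $X_i \to X_i'$, $W_D \to W_D'$, and finally $W_{\partial i} \to W_{\partial i}'$.

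First I would establish that $\phi$ intertwines $Q$ with $Q'$. Since $\phi T_i = T_i' \phi$ gives $\phi T = T' \phi$, we have $\phi T^n T^{*n} \phi^* = (T')^n (T')^{*n}$; passing to the SOT-limit yields $\phi Q^2 \phi^* = (Q')^2$, and hence $\phi Q = Q' \phi$ by uniqueness of positive square roots. In particular $\phi$ restricts to a unitary $\overline{\operatorname{Ran}}\, Q \to \overline{\operatorname{Ran}}\, Q'$, and the defining relation $X_i^* Q = Q T_i^*$ immediately yields $\phi X_i^* = (X_i')^* \phi$ on $\overline{\operatorname{Ran}}\, Q$.

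Next I would show that $\tau_\phi$ is a well-defined unitary intertwining $W_D$ with $W_D'$. For $n \ge m$ and $h,h' \in \cH$, iterating $W_D^* Q h = Q T^* h$ gives
\[
\langle W_D^n Qh,\, W_D^m Qh'\rangle_{\cR_D} = \langle Q h,\, (X^*)^{n-m} Q h'\rangle = \langle Q^2 h,\, T^{*(n-m)} h'\rangle_{\cH},
\]
and the same computation on the primed side produces the identical scalar after using $\phi Q^2 = (Q')^2 \phi$ and $\phi T^* = (T')^* \phi$. So $\tau_\phi$ extends by continuity to a unitary $\cR_D \to \cR_D'$, and the intertwining $\tau_\phi W_D = W_D' \tau_\phi$ is built into its defining formula.

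The main step, and the heart of the argument, is the intertwining $\tau_\phi W_{\partial i} = W_{\partial i}' \tau_\phi$ for $i=1,2$. The crucial observation is that because $W_{\partial 1}, W_{\partial 2}$ are commuting unitaries with product $W_D$, one has $W_{\partial 1} = W_D W_{\partial 2}^*$, and $W_{\partial 2}^*$ restricts to $X_2^*$ on $\overline{\operatorname{Ran}}\, Q$; hence
\[
W_{\partial 1} Qh = W_D X_2^* Qh = W_D Q T_2^* h,
\]
and symmetrically $W_{\partial 2} Qh = W_D Q T_1^* h$. Applying $\tau_\phi$ and chaining the preceding transports,
\[
\tau_\phi W_{\partial 1} Qh = \tau_\phi (W_D Q T_2^* h) = W_D' Q' (T_2')^* \phi h = W_{\partial 1}' Q' \phi h = W_{\partial 1}' \tau_\phi Qh,
\]
and similarly for $i=2$. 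Since each $W_{\partial i}$ commutes with $W_D$ (forced by $W_D = W_{\partial 1} W_{\partial 2}$) and $\tau_\phi W_D = W_D' \tau_\phi$, the identity propagates to the dense spanning set $\{W_D^n Q h\}$ and then by continuity to all of $\cR_D$. Specializing to $(T_1',T_2') = (T_1,T_2)$ and $\phi = I_\cH$ makes $\tau_\phi = I_{\cR_D}$, which forces $(W_{\partial 1}, W_{\partial 2}) = (W_{\partial 1}', W_{\partial 2}')$ and proves the uniqueness claim preceding the lemma. The only subtle point I anticipate is keeping the direction of $W_{\partial i}^* Q h = Q T_i^* h$ straight so as to invert it correctly into the explicit formula for $W_{\partial i} Q h$ via the factorization of $W_D$; once that formula is in hand, the rest is a routine chase.
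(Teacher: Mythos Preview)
Your proof is correct and follows essentially the same route as the paper's: transport $Q$, $X_i$, and $W_D$ along $\phi$, then use the identity $W_{\partial 1} = W_D W_{\partial 2}^*$ together with $W_{\partial 2}^*|_{\overline{\operatorname{Ran}}\,Q} = X_2^*$ to verify the intertwining on the dense spanning set $\{W_D^n Q h\}$. The only cosmetic difference is that the paper carries out the intertwining check directly on a generic vector $W_D^n x$ in one line, whereas you first handle the case $n=0$ and then propagate via commutativity of $W_{\partial i}$ with $W_D$; the content is identical.
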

\begin{proof}
Let $\phi:\mathcal{H}\to\mathcal{H'}$ be a unitary that intertwines $(T_1,T_2)$ and $(T_1',T_2')$. Let us denote $T=T_1T_2$ and $T'=T_1'T_2'$. Let $Q$ and $Q'$ be the limits of $T^nT^{* n}$ and $T'^nT'^{* n}$, respectively, in the strong operator topology. Clearly, $\phi$ intertwines $Q$ and $Q'$. Therefore $\phi$ takes $\mathcal{Q}\equiv\overline{\operatorname{Ran}}\,Q$ onto $\mathcal{Q'}\equiv\overline{\operatorname{Ran}}\,Q'$. We denote the restriction of $\phi$ to $\overline{\operatorname{Ran}}\,Q$ by $\phi$ itself. Let $(X_1,X_2)$ on $\overline{\operatorname{Ran}}\,Q$ and $(X_1',X_2')$ on $\overline{\operatorname{Ran}}\,Q'$ be the pairs of commuting co-isometries corresponding to the pairs $(T_1,T_2)$ and $(T_1',T_2')$ as in (\ref{X1X2}), respectively. It is easy to see from the definition that
$$
\phi(X_1,X_2)=(X_1',X_2')\phi.
$$Let $(W_{\partial1},W_{\partial2})$ on $\mathcal{R}_D$ and $(W_{\partial1}',W_{\partial2}')$ on $\mathcal{R}_D'$ be the pairs of commuting unitaries corresponding to $(T_1,T_2)$ and $(T_1',T_2')$, respectively. Remembering the formula (\ref{TheSpace-R}) for the spaces $\mathcal{R}_D$ and $\mathcal{R}_D'$, we define $\tau_\phi:\mathcal{R}_D\to\mathcal{R}_D'$ by
$$
\tau_\phi: W_D^{ n}x\mapsto W_D'^{ n}\phi x, \text{ for every } x\in \mathcal{Q} \text{ and } n\geq 0
$$and extend linearly and continuously. Trivially, $\tau_\phi$ is unitary and intertwines $W_D$ and $W_D'$. For a non-negative integer $n$ and $x$ in $\mathcal{Q}$, we have
\begin{eqnarray*}
\tau_\phi W_{\partial1}(W_D^{n}x)=\tau_\phi W_D^{n+1}(W_{\partial2}^*x)&=&W_D'^{ n+1}\phi(X_2^*x)\\
&=&W_D'^{n+1}W_{\partial2}'^*\phi x=W_{\partial1}'W_D'^{n}\phi x=W_{\partial1}'\tau_\phi(W_D^{ n}x).
\end{eqnarray*}A similar computation shows that $\tau_\phi$ intertwines $W_{\partial2}$ and $W_{\partial2}'$ too.
\end{proof}
Define the following two operators on $\ell^2_+(\cF_{c*})\oplus\cR$:
\begin{align}\label{CFDAndo}
\begin{cases}
V^D_{c1}=(I_{\ell^2_+} \otimes U_{c*}^* P_{c*}^\perp + S \otimes U_{c*}^* P_{c*}) \oplus W_{\partial1},\\
V^D_{c2} = (I_{\ell^2_+} \otimes P_{c*} U_{c*} + S \otimes P^\perp_{c*} U_{c*} ) \oplus W_{\partial2}.
\end{cases}
\end{align}

\begin{thm}\label{Thm:CFDAndo}
With the operators $V_{c1}^D$ and $V_{c2}^D$ as defined in (\ref{CFDAndo}), $(\Pi_D^c,V_{c1}^D,V_{c2}^D)$ is a commuting isometric lift of $(T_1,T_2)$.
\end{thm}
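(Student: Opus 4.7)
The plan is to verify three properties in turn: (i) $\Pi_D^c$ is isometric; (ii) each of $V_{c1}^D$ and $V_{c2}^D$ is an isometry and they commute; and (iii) the co-extension intertwinings $(V_{ci}^D)^* \Pi_D^c = \Pi_D^c T_i^*$ hold for $i=1,2$. Because each $V_{ci}^D$ is block-diagonal with respect to $\ell^2_+(\cF_{c*}) \oplus \cR$, every computation splits into a Berger-Coburn-Lebow piece on $\ell^2_+(\cF_{c*})$ and a unitary piece on $\cR$.

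For (i), since the vectors $\{S_{\cF_{c*}}^n E_0^* \Lambda_{c*}(I-VT^*)T^{*n}h\}_{n \ge 0}$ are mutually orthogonal,
\begin{equation*}
\|\Pi_D^c h\|^2 = \sum_{n \ge 0} \|\Lambda_{c*}(I-VT^*)T^{*n} h\|^2 + \|Qh\|^2.
\end{equation*}
Using that $\Lambda_{c*}$ is an isometry and $\|(I-VT^*)g\|^2 = \|D_{T^*}g\|^2$ by part (1) of Proposition \ref{P:dil} applied to $T$ and its lift $V$, the right side equals $\sum_n \|D_{T^*}T^{*n} h\|^2 + \|Qh\|^2$, which by parts (3) and (4) of Proposition \ref{P:dil} is $\|P_{M_+(\cL_*)}h\|^2 + \|P_\cR h\|^2 = \|h\|^2$. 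For (ii), on $\cR$ the operators $W_{\partial 1}, W_{\partial 2}$ are commuting unitaries by Lemma \ref{special-ext}. On $\ell^2_+(\cF_{c*}) \cong \ell^2_+ \otimes \cF_{c*}$ the standard Berger-Coburn-Lebow expansion (using $U_{c*}^* U_{c*} = I_{\cF_{c*}}$, $P_{c*}^2 = P_{c*}$, $P_{c*} P_{c*}^\perp = 0$, and $S^* S = I_{\ell^2_+}$) shows each factor is isometric and both orderings of their product collapse to $S \otimes I_{\cF_{c*}} = S_{\cF_{c*}}$. Hence $V_{c1}^D V_{c2}^D = V_{c2}^D V_{c1}^D = S_{\cF_{c*}} \oplus W_D = V_c^D$.

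The main step is (iii). On $\cR$, the identity $W_{\partial i}^* Q h = X_i^* Q h = Q T_i^* h$ is immediate from \eqref{X1X2} and the fact that $W_{\partial i}^*$ extends $X_i^*$. On $\ell^2_+(\cF_{c*})$, set $f_n := \Lambda_{c*}(I-VT^*)T^{*n}h$, so the first summand of $\Pi_D^c h$ is the sequence $(f_n)_{n \ge 0}$. The adjoint
\begin{equation*}
\bigl(V_{c1}^D|_{\ell^2_+(\cF_{c*})}\bigr)^* = I_{\ell^2_+} \otimes P_{c*}^\perp U_{c*} + S^* \otimes P_{c*} U_{c*}
\end{equation*}
sends $(f_n)_n$ to the sequence whose $k$-th slot is $P_{c*}^\perp U_{c*} f_k + P_{c*} U_{c*} f_{k+1}$. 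The defining identity for the coordinate-free And\^o tuple for $(T_1^*, T_2^*)$ (cf.\ \eqref{Gamma-c*} composed with $U_{c*}$) reads
\begin{equation*}
U_{c*} \Lambda_{c*}(I-VT^*) g = (I - V_1 T_1^*) g \oplus (I - V_2 T_2^*) T_1^* g,
\end{equation*}
so the two summands become $0 \oplus (I - V_2 T_2^*) T_1^* T^{*k} h$ and $(I - V_1 T_1^*) T^{*(k+1)} h \oplus 0$. Their sum coincides with $\Lambda_{c*}(I-VT^*)T^{*k} T_1^* h = (I - V_1 T_1^*) T_2^* T^{*k} T_1^* h \oplus (I - V_2 T_2^*) T^{*k} T_1^* h$ once one invokes $T^{*(k+1)} = T_2^* T^{*k} T_1^*$ and $T_1^* T^{*k} = T^{*k} T_1^*$, both consequences of the commutativity of $T_1$ and $T_2$. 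This is precisely the $k$-th slot of $\Pi_D^c T_1^* h$. The argument for $V_{c2}^D$ is symmetric, now reading the $U_{c*}$-identity the other way and swapping the roles of $P_{c*}$ and $P_{c*}^\perp$. The only substantive obstacle in the whole proof is the alignment of the off-diagonal cross term produced by $S^*$ with the shift-by-one of $(f_n)$; it is exactly at this point that the commutativity of $T_1, T_2$ and the partial-isometry structure defining $U_{c*}$ are used essentially, and everything else is direct-sum bookkeeping.
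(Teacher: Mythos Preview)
Your proof is correct and follows essentially the same approach as the paper: the heart of both arguments is the co-extension intertwining (your step (iii)), carried out via the same side computations $P_{c*}^\perp U_{c*}\Lambda_{c*}(I-VT^*)T^{*k}h = 0 \oplus (I-V_2T_2^*)T_1^*T^{*k}h$ and $P_{c*}U_{c*}\Lambda_{c*}(I-VT^*)T^{*(k+1)}h = (I-V_1T_1^*)T^{*(k+1)}h \oplus 0$, then recombined using commutativity. You add explicit verifications that $\Pi_D^c$ is isometric and that the BCL pair is a commuting pair of isometries, which the paper dispatches more briefly by citing the Berger--Coburn--Lebow form, but otherwise the two proofs are the same.
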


\begin{proof}
Note that $(V_{c1}^D, V_{c2}^D)$ is a commutative pair of isometries since it has the form of  a
Berger-Coburn-Lebow model for a commuting pair of isometries.
 It remains to show that, for $j=1,2,$
$V_{cj}^D$ is a lift of $T_j$.  We  do in detail only the case $j=1$ as the other case is similar.
All that remains to show is that
\begin{equation}   \label{toshow}
V_{c1}^{D*} \Pi_D^c h = \Pi_D^c T_1^* h \text{ for all } h \in \cH.
\end{equation}
We use the definitions to compute
\begin{align}\label{step1}
& V_{c1}^{D*} \Pi_D^c h  \\
=& \left( (I_{\ell^2_+} \oplus P^\perp_{c*} U_{c*}) + (S^* \otimes P_{c*} U_{c*}) \right)
\sum_{n=0}^\infty S_{\cF_{c*}}^n E_0^* \Lambda_{c*} (I - V T^*)T^{*n} h \oplus W_{\partial1}^* Q h   \notag \\
=&  \left( \sum_{n=0}^\infty S_{\cF_{c*}}^n E_0^* P^\perp_{c*} U_{c*} E_0^* \Lambda_{c*}
(I - VT^*) T^{*n} h + \sum_{n=0}^\infty S_{\cF_c}^n E_0^* U_{c*} \Lambda_{c*} (I - VT^*) T^{*n+1} h \right) \oplus
QT_1^* h.
\notag
\end{align}
As a side computation let us note that
\begin{align} \label{side1}
& P_{c*}^\perp U_{c*} \Lambda_{c*} (I - VT^*) T^{*n} h
= P_{c*}^\perp U_{c*} \left( (I - V_1 T_1^*) T_2^* T^{*n} h \oplus ( I - V_2 T_2^*) T^{*n} h \right)  \\
& \quad = P_{c*}^\perp \left( (I - V_1 T_1^*) T^{*n} h \oplus (I - V_2 T_2^*) T_1^* T^{*n} h \right) \notag \\
& \quad  = 0 \oplus (I - V_2 T_2^*) T_1^* T^{*n} h  \notag
\end{align}
while
\begin{align}
\nonumber& P_{c*} U_{c*} \Lambda_{c*} (I - V T^*) T^{*n+1} h\\
&=P_{c*} U_{c*} \left( (I - V_1 T_1^*) T_2^* T^{* n+1} h \oplus (I - V_2 T_2^*) T^{*n+1} h \right)  \label{side2} \\
&=  P_{c*} \left( (I - V_1 T_1^*) T^{*n+1} h \oplus (I - V_2 T_2^*) T_1^* T^{*n+1} h \right) \notag  \\
&= ( I - V_1 T_1^*) T^{*n+1} h \oplus 0.  \notag
\end{align}
Plugging \eqref{side1} and \eqref{side2} back into \eqref{step1} then leads us to
\begin{align*}
V_{c1}^{D*} \Pi_D^c h & = \sum_{n=0}^\infty S_{\cF_{c*}}^n E_0^* \left( (I - V_1 T_1^*) T^{*n+1} h
\oplus (I - V_2 T_2^*) T_1^* T^{*n} h \right)  \\
& = \sum_{n=0}^\infty S_{\cF_{c*}}^n E_0^* \left( (I - V_1 T_1^*) T_2^* T^{*n} T_1^* h
\oplus (I - V_2 T_2^*) T^{*n} T_1^* h \right) \\
& = \sum_{n=0}^\infty S_{\cF_{c*}}^n E_0^* \Lambda_{c*} T_1^* h = \Pi_D^c T_1^* h
\end{align*}
and \eqref{toshow} now follows.
\end{proof}

We can convert the preceding analysis to a functional-model form as follows.  Define the $Z$-transform
$Z_+ \colon \ell^2_+(\cF_{c*}) \to H^2(\cF_{c*})$ by
$$
  Z_+ \colon \{ f_n \}_{n \ge 0} \mapsto \sum_{n=0}^\infty z^n f_n .
$$
Then $Z_+$ is unitary from $\ell^2_+(\cF_{c*})$ onto $H^2(\cF_{c*})$ and intertwines the respective shift operators:
$$
   M_z Z_+ = Z_+ S_{\cF_{c*}}.
$$
Let $\omega_D$ and $\omega_{NF}$ be the unitaries as defined in (\ref{omega-ext}) and (\ref{omegaNF}), respectively. We observed in \S \ref{S:SNFmodel} that the unitary
$$
u:=\omega_{NF}\omega_D^*:\cR_D\to\overline{\Delta_TL^2(\cD_T)}
$$intertwines $W_D$ and $M_{\zeta}|_{\overline{\Delta_TL^2(\cD_T)}}$. Consequently, the operator
\begin{align*}
\cZ:=Z_+\oplus u:\ell^2_+(\cF_{c*})\oplus\cR_D\to H^2(\cF_{c*})\oplus\overline{\Delta_TL^2(\cD_T)}
\end{align*}has the following intertwining property
\begin{align*}
  \cZ V_c^D=(M_z\oplus M_{\zeta}|_{\overline{\Delta_TL^2(\cD_T)}})\cZ.
\end{align*}
We adopt the following notations
\begin{align}\label{WandVNFs}
\begin{cases}
(W_{\sharp1},W_{\sharp 2},M_{\zeta}|_{\overline{\Delta_TL^2(\cD_T)}}):=(uW_{\partial1}u^*,uW_{\partial2}u^*,uW_Du^*),\\
(V_{\flat1},V_{\flat2},V_{\flat}):=(\cZ V_{c1}^D\cZ^*,\cZ V_{c2}^D\cZ^*,\cZ V_c^D\cZ^*)
\end{cases}
\end{align}
and see that for every $f=\sum_{n=0}^{\infty}z^na_n$ in $H^2(\cF_{c*})$ and $g$ in $\overline{\Delta_TL^2(\cD_T)}$,
\begin{align*}
\cZ V_{c1}^D\cZ^*(f\oplus g)&=\cZ V_{c1}^D\cZ^*\left(\sum_{n=0}^{\infty}z^na_n\oplus g\right)\\
&=\cZ (I_{\ell^2_+} \otimes U_{c*}^* P_{c*}^\perp + S \otimes U_{c*}^* P_{c*})\{a_n\}_{n\geq 0}\oplus W_{\sharp1}g\\
&=\cZ\{U_{c*}^* P_{c*}^\perp a_n+U_{c*}^* P_{c*}a_{n-1}\}_{n\geq0}\oplus W_{\sharp1}g\quad[a_{-1}:=0]\\
&=M_{U_{c*}^* P_{c*}^\perp + z U_{c*}^* P_{c*}} \oplus W_{\sharp1}(f\oplus g).
\end{align*}
Therefore (after a similar computation with $\cZ V_{c2}^D\cZ^*$) we obtain
\begin{align}\label{NFAndo}
\nonumber&(V_{\flat1},V_{\flat2},V_{\flat})=(V_{\flat1},V_{\flat2},V_{\flat1}V_{\flat2})\\
=&(M_{U_{c*}^* P_{c*}^\perp + z U_{c*}^* P_{c*}} \oplus W_{\sharp1},M_{P_{c*} U_{c*} + z P^\perp_{c*} U_{c*}} \oplus W_{\sharp2},M_z\oplus M_{\zeta}|_{\overline{\Delta_TL^2(\cD_T)}}).
\end{align} Finally define an isometry $\Pi_{\flat}:\cH\to H^2(\cF_{c*})\oplus\overline{\Delta_TL^2(\cD_T)}$ as
\begin{align}\label{NF-iso}
\Pi_{\flat} h=\cZ\Pi^c_Dh=\sum_{n=0}^{\infty}z^n\Lambda_{c*} (I-VT^*)T^{* n}h\oplus u(Qh).
\end{align}Therefore by Theorem \ref{Thm:CFDAndo} we have proved the following theorem, which gives a Sz.-Nagy--Foias model for an And\^o dilation for a pair of commuting contractions.

\begin{thm}\label{Thm:NFAndo}
Let $(T_1,T_2)$ be a pair of commuting contractions and  let $(\cF_{c*},\Lambda_{c*},P_{c*},U_{c*})$ be a coordinate-free And\^o tuple for $(T_1^*, T_2^*)$. Then with the notations as in (\ref{NFAndo}) and (\ref{NF-iso}) the triple $(\Pi_{\flat},V_{\flat1},V_{\flat2})$ is an isometric lift of $(T_1,T_2)$.
\end{thm}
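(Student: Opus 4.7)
My proposal is to deduce the theorem directly from Theorem \ref{Thm:CFDAndo} by transporting everything along the unitary $\cZ = Z_+ \oplus u$. The point is that all the hard work—commutativity, isometric property, and the lifting intertwining—has already been done for the Douglas coordinate-free model $(\Pi_D^c, V_{c1}^D, V_{c2}^D)$, and $\cZ$ is explicitly unitary by construction ($Z_+$ is a unitary identification of $\ell^2_+(\cF_{c*})$ with $H^2(\cF_{c*})$, and $u = \omega_{NF}\omega_D^*$ was already shown in \S\ref{S:SNFmodel} to be a unitary intertwining $W_D$ with $M_\zeta|_{\overline{\Delta_T L^2(\cD_T)}}$).

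First I would observe that by definition \eqref{WandVNFs} we have $V_{\flat j} = \cZ V_{cj}^D \cZ^*$ for $j=1,2$ and $\Pi_\flat = \cZ \Pi_D^c$. Since $\cZ$ is unitary and by Theorem \ref{Thm:CFDAndo} the operators $V_{c1}^D$ and $V_{c2}^D$ are commuting isometries with $V_{c1}^D V_{c2}^D = V_c^D$, it follows immediately that $V_{\flat 1}, V_{\flat 2}$ are commuting isometries on $H^2(\cF_{c*}) \oplus \overline{\Delta_T L^2(\cD_T)}$ with product $V_\flat = V_{\flat 1} V_{\flat 2}$. Likewise, $\Pi_\flat$ is isometric because $\Pi_D^c$ is (the latter being part of Theorem \ref{Thm:CFDAndo}).

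Next I would verify the lifting intertwining $V_{\flat j}^* \Pi_\flat = \Pi_\flat T_j^*$ for $j=1,2$. This is just conjugation:
\begin{equation*}
V_{\flat j}^* \Pi_\flat = \cZ (V_{cj}^D)^* \cZ^* \cZ \Pi_D^c = \cZ (V_{cj}^D)^* \Pi_D^c = \cZ \Pi_D^c T_j^* = \Pi_\flat T_j^*,
\end{equation*}
where the middle equality is precisely the content of the intertwining \eqref{toshow} established in the proof of Theorem \ref{Thm:CFDAndo}. Finally, the explicit functional-model forms of $V_{\flat 1}$ and $V_{\flat 2}$ given in \eqref{NFAndo} are a direct bookkeeping computation already carried out in the paragraph preceding the theorem statement, so nothing new is needed there.

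In short, the only substantive observation required for the proof is that $\cZ$ is a well-defined unitary that carries the Douglas functional model to the Sz.-Nagy--Foias one; once this is in hand (which was done in \S\ref{S:SNFmodel} via the identification $u = \omega_{NF}\omega_D^*$), the theorem is just the invariance of the structure ``commutative isometric lift'' under unitary change of coordinates. There is no real obstacle; I would expect the proof to be essentially three lines plus a reference to \eqref{NFAndo} for the explicit matrix form.
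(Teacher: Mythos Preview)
Your proposal is correct and matches the paper's own argument essentially verbatim: the paper also derives Theorem~\ref{Thm:NFAndo} directly from Theorem~\ref{Thm:CFDAndo} by conjugating with the unitary $\cZ = Z_+ \oplus u$, and indeed the text immediately preceding the theorem statement concludes with ``Therefore by Theorem~\ref{Thm:CFDAndo} we have proved the following theorem.''
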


\section{Characteristic/Admissible triples and functional model for a commuting pair of contractions} \label{S:NFmodel}

\subsection{Characteristic triples and functional models}

Let $(T_1,T_2)$ be a pair of commuting contractions and $(\cF_{c*},\Lambda_{c*},P_{c*},U_{c*})$ be a
coordinate-free And\^o tuple for $(T_1^*,T_2^*)$. Define a pair $(G_{\sharp1},G_{\sharp2})$ of contractions
on $\cD_{T^*}$ as
\begin{align}\label{Gs}
  (G_{\sharp1},G_{\sharp2}):=(\iota_*\Lambda_{c*}^*P_{c*}^\perp U_{c*}\Lambda_{c*}\iota_*^*,\iota_*\Lambda_{c*}^*U_{c*}^*P_{c*}\Lambda_{c*}\iota_*^*).
\end{align}Let the pair $(W_{\sharp1},W_{\sharp2})$ of commuting unitaries be as defined in (\ref{WandVNFs}). The objective of this section is to study the triple $((G_{\sharp1},G_{\sharp2}),(W_{\sharp1},W_{\sharp2}),\Theta_T)$, where $\Theta_T$ is the characteristic triple for the contraction $T=T_1T_2$. We start by giving the triple a name.

\begin{definition}
The triple $((G_{\sharp1},G_{\sharp2}),(W_{\sharp1},W_{\sharp2}),\Theta_T)$ is called the characteristic triple for $(T_1,T_2)$.
\end{definition}

Recall that a And\^o tuple for $(T_1^*, T_2^*)$ depends on an arbitrary choice of unitary extension of a given
partially defined isometric map, and hence is highly nonunique except in the case where $T = T_1 \cdot T_2$
is a regular factorization.  Thus, as the definition of characteristic triple for a commutative contractive pair
$(T_1, T_2)$ depends on a choice of And\^o tuple for $(T_1^*, T_2^*)$, it would appear that $(T_1, T_2)$
does not uniquely determine its characteristic triple, specifically the component ${\mathbb G} =
(G_{\sharp 1}, G_{\sharp 2})$.  This presumption however remarkably turns out to be
wrong, as noted in the following result.

\begin{proposition}  \label{P:fundamental}  Let $(T_1, T_2)$ be a commutative contractive operator pair.
Then the operators $G_{\sharp 1}, G_{\sharp 2}$ in a characteristic triple
$((G_{\sharp 1}, G_{\sharp 2}), (W_{\sharp 1}, W_{\sharp 2}, \Theta_T)$ are uniquely determined from
$(T_1, T_2)$ as solutions of the equations
\begin{equation}   \label{fundamental}
T_i^* - T_j T^* = D_{T^*} G_{\sharp i } D_{T^*} \text{ where } (i,j) = (1,2) \text{ or } (2,1).
\end{equation}
\end{proposition}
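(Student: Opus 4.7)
The plan is to verify two statements: (i) the operators $G_{\sharp 1}, G_{\sharp 2}$ as defined in \eqref{Gs} do satisfy \eqref{fundamental}, and (ii) solutions $G$ on $\cD_{T^*}$ of the equation $D_{T^*} G D_{T^*} = T_i^* - T_j T^*$ are unique. The uniqueness part (ii) is essentially free: the equation is equivalent to $\langle G D_{T^*} h, D_{T^*} k\rangle = \langle (T_i^* - T_j T^*) h, k\rangle$ for all $h,k \in \cH$, and since $\{D_{T^*} h : h \in \cH\}$ is dense in $\cD_{T^*}$, this sesquilinear form determines $G$ on all of $\cD_{T^*}$. Note that, as a byproduct, this forces the operators $G_{\sharp 1}, G_{\sharp 2}$ to be independent of the arbitrary unitary extension $U_{c*}$ of $U_{c*0}$ used in constructing the coordinate-free And\^o tuple. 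Consequently, the substantive task is to verify \eqref{fundamental} for the concrete definitions in \eqref{Gs}.

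For the verification I would compute $\langle G_{\sharp i} D_{T^*} h, D_{T^*} k\rangle$ directly from \eqref{Gs}. First, by the unitarity of $\iota_* \colon \cL_* \to \cD_{T^*}$ from Proposition \ref{P:LandL*}, $\iota_*^* D_{T^*} h = (I - VT^*) h$. Next, \eqref{Gamma-c*} gives
$$\Lambda_{c*} (I - VT^*) h = (I - V_1 T_1^*) T_2^* h \oplus (I - V_2 T_2^*) h,$$
which lies in $\cD_{U_{c*0}}$, the canonical domain on which $U_{c*}$ is \emph{forced} to agree with the formula defining $U_{c*0}$. Applying that formula yields
$$U_{c*} \Lambda_{c*} (I - VT^*) h = (I - V_1 T_1^*) h \oplus (I - V_2 T_2^*) T_1^* h.$$

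To handle $G_{\sharp 1}$ I would then apply $P_{c*}^\perp$ to obtain $0 \oplus (I - V_2 T_2^*) T_1^* h$, and pair with $\Lambda_{c*}(I-VT^*)k$, whose second component is $(I - V_2 T_2^*) k$; the resulting inner product collapses to $\langle (I - V_2 T_2^*) T_1^* h, (I - V_2 T_2^*) k\rangle$. For $G_{\sharp 2}$ I would instead use the adjoint identity $\langle U_{c*}^* P_{c*} x, y\rangle = \langle P_{c*} x, U_{c*} y\rangle$, which avoids $U_{c*}^*$ altogether and reduces the computation to pairing $P_{c*} \Lambda_{c*}(I-VT^*) h = (I - V_1 T_1^*) T_2^* h \oplus 0$ against $U_{c*}\Lambda_{c*}(I-VT^*) k$; this produces $\langle (I - V_1 T_1^*) T_2^* h, (I - V_1 T_1^*) k\rangle$.

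The last step is to convert these defect-subspace inner products back to operators on $\cH$. The key identity, implicit in Proposition \ref{P:LandL*} and proven in the same one-line fashion from $V_j^* V_j = I$ together with $V_j^*|_\cH = T_j^*$, is
$$\langle (I - V_j T_j^*) u, (I - V_j T_j^*) v\rangle = \langle (I - T_j T_j^*) u, v\rangle.$$
Applying this and then using $T_2 T_2^* T_1^* = T_2 (T_1 T_2)^* = T_2 T^*$ (respectively $T_1 T_1^* T_2^* = T_1 T^*$), a consequence of the commutative factorization $T = T_1 T_2 = T_2 T_1$, delivers
$$\langle G_{\sharp 1} D_{T^*} h, D_{T^*} k\rangle = \langle (T_1^* - T_2 T^*) h, k\rangle, \quad \langle G_{\sharp 2} D_{T^*} h, D_{T^*} k\rangle = \langle (T_2^* - T_1 T^*) h, k\rangle,$$
which is \eqref{fundamental}. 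The only real obstacle is bookkeeping: one must track carefully that the specific compositions $\Lambda_{c*}^* P_{c*}^\perp U_{c*} \Lambda_{c*}$ and $\Lambda_{c*}^* U_{c*}^* P_{c*} \Lambda_{c*}$ probe $U_{c*}$ only on $\cD_{U_{c*0}}$ paired with vectors in $\operatorname{Ran} \Lambda_{c*}$, so that the non-canonical choice of extension $U_{c*}$ never actually intervenes --- this is precisely the mechanism that makes $G_{\sharp 1}, G_{\sharp 2}$ well-defined invariants of $(T_1, T_2)$.
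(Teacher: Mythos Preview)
Your proof is correct and follows essentially the same route as the paper's: both compute $\langle G_{\sharp i} D_{T^*} h, D_{T^*} k\rangle$ by unwinding the definition \eqref{Gs} through $\iota_*^*$, $\Lambda_{c*}$, the forced action of $U_{c*0}$, and the projection, then collapse the resulting $\cL_{j*}$ inner product via $\langle (I - V_j T_j^*) u, (I - V_j T_j^*) v\rangle = \langle (I - T_j T_j^*) u, v\rangle$. Your treatment is in fact slightly more complete---you handle $G_{\sharp 2}$ explicitly with the adjoint trick (avoiding $U_{c*}^*$ on a domain where its action is not canonical), and you spell out the uniqueness argument that the paper relegates to Remark~\ref{R:uniqueness}---but the core mechanism is identical.
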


\begin{remark} \label{R:uniqueness}
{\rm As the operators $G_{\sharp 1}$ and $G_{\sharp 2}$ are by definition operators on the
space $\cD_{T^*} = \overline{\operatorname{Ran}} \, D_{T^*}$, any solutions of the equations \eqref{fundamental},
assuming that such exist, must be unique.  Operators $G_{\sharp 1}$ and $G_{\sharp 2}$ satisfying equations of this
type appear in the theory of $\Gamma$-contractions (commutative operator pairs having the symmetrized
bidisk as a complete spectral set) and of tetrablock contractions (commutative triples of operators having
the tetrablock domain as a complete spectral set)---we refer to \cite{B-P-SR},  \cite{sir's tetrablock paper}
for further details.
}\end{remark}

\begin{proof}[Proof of Proposition \ref{P:fundamental}.]
We sketch the proof of \eqref{fundamental} only for the case $(i,j) = (1,2)$ as the case $(i,j) = (2,1)$
is similar.

Let $(V_1,V_2)$ be an arbitrary And\^o dilation for $(T_1,T_2)$ and set $V=V_1V_2$. Then for
$h, h' \in \cH$ we compute
\begin{align*}
\langle D_{T^*}G_{\sharp1}D_{T^*}h,h'\rangle_\cH &=
\langle P_{c*}^\perp U_{c*}\Lambda_{c*}\iota_*^*D_{T^*}h, \Lambda_{c*}\iota_*^*D_{T^*}h'\rangle_{\cF_{c*}}
\text{ (by \eqref{Gs})}\\
&=\langle P_{c*}^\perp U_{c*}\Lambda_{c*}(I_{\cH}-VT^*)h,\Lambda_{c*}(I_{\cH}-VT^*)h'\rangle_{\cF_{c*}}
 \text{ (by Proposition \ref{P:dil})}\\
&=\langle (I_{\cH}-V_2T_2^*)T_1^*h,(I_{\cH}-V_2T_2^*)h' \rangle_{\cL_{2*}} \\
& = \langle D_{T_2} T_1^* h, D_{T_2} h' \rangle_{\cF_{c*}}
                           \text{ (by Proposition \ref{P:dil} with $T_2$ in place of $T$)} \\
& = \langle (I - T_2^* T_2) T_1^* h, h' \rangle_\cH = \langle (T_1^* - T_2 T^*) h, h' \rangle_\cH
\end{align*}
and the result follows.
\end{proof}

The characteristic triple $((G_{\sharp1},G_{\sharp2}),(W_{\sharp1},W_{\sharp2}),\Theta_T)$ for a commuting
pair of contractions $(T_1, T_2)$ is the invariant leading to the construction of a Sz.-Nagy--Foias-type
functional model for $(T_1, T_2)$ as follows.

\begin{thm}\label{Thm:SNFmodelPair}
Let $(T_1,T_2)$ be a pair of commuting contractions and let its characteristic triple be $((G_{\sharp1},G_{\sharp2}),(W_{\sharp1},W_{\sharp2}),\Theta_T)$. Then the Sz.-Nagy--Foias model space
\begin{equation}  \label{HNF'}
\cH_{NF}=\left(H^2(\cD_{T^*})\oplus\overline{\Delta_T(L^2(\cD_T))})\right)\ominus\{\Theta_Tf\oplus\Delta_Tf:f\in H^2(\cD_T)\}
\end{equation}
is coinvariant under
$$
(M_{G_{\sharp1}^*+zG_{\sharp2}}\oplus W_{\sharp1},M_{G_{\sharp2}^*+zG_{\sharp1}}\oplus W_{\sharp2},M_z\oplus M_{\zeta}|_{\overline{\Delta_T(L^2(\cD_T))}})
$$and $(T_1,T_2,T_1T_2)$ is unitarily equivalent to
\begin{align}\label{NFmodelPair}
P_{\cH_{NF}}(M_{G_{\sharp1}^*+zG_{\sharp2}}\oplus W_{\sharp1},M_{G_{\sharp2}^*+zG_{\sharp1}}\oplus W_{\sharp2},M_z\oplus M_{\zeta}|_{\overline{\Delta_T(L^2(\cD_T))}})|_{\cH_{NF}}.
\end{align}
\end{thm}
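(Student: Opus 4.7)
The strategy is to realize the stated functional-model triple as the compression to $\cH_{NF}$ of the And\^o-type isometric lift $(\Pi_\flat, V_{\flat 1}, V_{\flat 2}, V_\flat)$ of Theorem \ref{Thm:NFAndo}, which lives on the ``oversized'' space $\cK_\flat := H^2(\cF_{c*}) \oplus \overline{\Delta_T L^2(\cD_T)}$ whose first coefficient space is $\cF_{c*} \supset \Lambda_{c*}\cL_* \cong \cD_{T^*}$. The bridge is an explicit isometric embedding of $\cK_{NF}$ into $\cK_\flat$ furnished by the composite unitary $\iota_*^* \colon \cD_{T^*} \to \cL_*$ followed by $\Lambda_{c*} \colon \cL_* \hookrightarrow \cF_{c*}$.

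First I would introduce the constant isometric operator $J := \Lambda_{c*}\iota_*^* \colon \cD_{T^*} \to \cF_{c*}$; by Proposition \ref{P:dil}(1) and the defining property of $\Lambda_{c*}$, the map $J$ is an isometry onto $\Lambda_{c*}\cL_* \subset \cF_{c*}$. Then define $\cJ := M_J \oplus I \colon \cK_{NF} \to \cK_\flat$, a pointwise application of $J$ on the first coordinate together with the identity on $\overline{\Delta_T L^2(\cD_T)}$; this is an isometric embedding of $\cK_{NF}$ onto $H^2(\Lambda_{c*}\cL_*) \oplus \overline{\Delta_T L^2(\cD_T)}$.

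The key verification is that the two canonical embeddings of $\cH$ match under $\cJ$, namely $\cJ \Pi_{NF} = \Pi_\flat$. For the first coordinate this uses $\iota_*^* D_{T^*} = I - VT^*$ from Proposition \ref{P:dil}(1) to convert the NF formula $\sum_n z^n D_{T^*}T^{*n}h$ into the $\flat$-formula $\sum_n z^n \Lambda_{c*}(I-VT^*)T^{*n}h$. For the second coordinate the definition $u := \omega_{NF}\omega_D^*$ from \S\ref{S:SNFmodel} together with $\omega_D P_\cR h = Q h$ gives $u(Qh) = \omega_{NF} P_\cR h$. Next I would compute the compressed operators $\widetilde V_j := \cJ^* V_{\flat j}\cJ$ on $\cK_{NF}$: since $J^* = \iota_*\Lambda_{c*}^*$, inspection of the definition \eqref{Gs} yields
\begin{align*}
J^*(U_{c*}^* P_{c*}^\perp + z U_{c*}^* P_{c*}) J &= G_{\sharp 1}^* + z G_{\sharp 2}, \\
J^*(P_{c*} U_{c*} + z P_{c*}^\perp U_{c*}) J &= G_{\sharp 2}^* + z G_{\sharp 1},
\end{align*}
so $\widetilde V_j = M_{G_{\sharp j}^* + z G_{\sharp (3-j)}} \oplus W_{\sharp j}$ for $j=1,2$. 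For the product, $H^2(\Lambda_{c*}\cL_*)$ is reducing for $M_z$ on $H^2(\cF_{c*})$, whence $\cJ^* V_\flat \cJ = M_z \oplus M_\zeta|_{\overline{\Delta_T L^2(\cD_T)}}$.

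Finally, the lift property $V_{\flat j}^* \Pi_\flat = \Pi_\flat T_j^*$ of Theorem \ref{Thm:NFAndo}, together with $\cJ^* \cJ = I$ and the identification $\cJ \Pi_{NF} = \Pi_\flat$, gives
\[
\widetilde V_j^* \Pi_{NF} = \cJ^* V_{\flat j}^* \cJ \Pi_{NF} = \cJ^* V_{\flat j}^* \Pi_\flat = \cJ^* \Pi_\flat T_j^* = \Pi_{NF} T_j^*,
\]
so $\cH_{NF} = \Pi_{NF}\cH$ is coinvariant under each $\widetilde V_j$; coinvariance under $M_z \oplus M_\zeta$ is classical (\S\ref{S:SNFmodel}). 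Unitary equivalence via $\Pi_{NF} \colon \cH \to \cH_{NF}$ then identifies $(T_1, T_2, T_1 T_2)$ with the stated compression. The only genuine bookkeeping step is the algebraic identity that matches the constant-and-linear multipliers with $G_{\sharp j}^* + z G_{\sharp (3-j)}$; once that is verified, the rest is a chase through definitions.
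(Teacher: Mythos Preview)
Your proposal is correct and follows essentially the same route as the paper: the isometry you call $\cJ = M_{\Lambda_{c*}\iota_*^*}\oplus I$ is exactly the paper's $L=(I_{H^2}\otimes\Lambda_{c*}\iota_*^*)\oplus I$, and both arguments verify $L\Pi_{NF}=\Pi_\flat$, compute $L^*V_{\flat j}L=M_{G_{\sharp j}^*+zG_{\sharp(3-j)}}\oplus W_{\sharp j}$ from the definition \eqref{Gs}, and then transport the lift property of Theorem \ref{Thm:NFAndo} through $L$ to obtain $(S_j^{NF})^*\Pi_{NF}=\Pi_{NF}T_j^*$. Your write-up even supplies a bit more detail (the second-coordinate match via $u(Qh)=\omega_{NF}P_\cR h$ and the explicit check of the linear-pencil identity) than the paper's ``direct check using the definitions.''
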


\begin{proof}
Let us denote
\begin{align}\label{Vs}
\nonumber\underline{V}_{NF}:=&(S^{NF}_1,S^{NF}_2,V_{NF})\\=&(M_{G_{\sharp1}^*+zG_{\sharp2}}\oplus W_{\sharp1},M_{G_{\sharp2}^*+zG_{\sharp1}}\oplus W_{\sharp2},M_z\oplus M_{\zeta}|_{\overline{\Delta_T(L^2(\cD_T))}}).
\end{align}
Note that with the triple $(\Pi_{\flat},V_{\flat1},V_{\flat2})$ as in Theorem \ref{Thm:NFAndo} and
with the isometry $L$ given by
$$
L:=(I_{H^2}\otimes\Lambda_{c*}\iota_*^*\oplus I_{\overline{\Delta_T(L^2(\cD_T))}}):H^2(\cD_{T^*})\oplus\overline{\Delta_T(L^2(\cD_T))}\to H^2(\cF_{c*})\oplus\overline{\Delta_T(L^2(\cD_T))},
$$
it is a direct check using the definitions that
\begin{equation}  \label{check}
L^*(V_{\flat1},V_{\flat2},V_{\flat})L=(S^{NF}_1,S^{NF}_2,V_{NF})=\underline{V}_{NF}
\text{ and }   L\Pi_{NF}=\Pi_{\flat}.
\end{equation}
By Theorem \ref{Thm:NFAndo} we also have
\begin{equation}   \label{intertwine1}
(V_{\flat 1}, V_{\flat 2}, V_\flat)^* \Pi_\flat = \Pi_\flat (T_1^*, T_2^*, T^*).
\end{equation}
Using the second equation in \eqref{check} then leads to
$$
  (V_{\flat 1}, V_{\flat 2}, V_\flat)^* L \Pi_{NF} = L \Pi_{NF} (T_1^*, T_2^*, T^*).
$$
Multiplying by $L^*$ on the left and using that $L$ is an isometry then gives us
$$
 L^* (V_{\flat 1}, V_{\flat 2}, V_\flat )^* L \Pi_{NF} = \Pi_{NF} (T_1^*, T_2^*, T^*).
$$
Using the first equation in \eqref{check} then gives us
\begin{equation}\label{Eg1}
 (S^{NF}_1,S^{NF}_2,V_{NF})^*\Pi_{NF}=\Pi_{NF}(T_1,T_2,T_1T_2)^*.
\end{equation}
We have seen in \S \ref{S:SNFmodel} that $\operatorname{Ran}\Pi_{NF}$ is $\cH_{NF}$ as  in
\eqref{HNF'}.   Hence  from the intertwining (\ref{Eg1}) we see that the unitary transformation
$\Pi_{NF} \colon \cH \to \cH_{NF}$ establishes the unitary equivalence between the operator tuple
\eqref{NFmodelPair} on $\cH_{NF}$ and $(T_1, T_2, T)$ on $\cH$, and the theorem follows.
\end{proof}

\subsection{Characteristic triple as a complete unitary invariant}
It was proved by Sz.-Nagy--Foias  (see \cite[Chapter VI]{Nagy-Foias} that the characteristic function $\Theta_T$
for a c.n.u.\ contraction $T$ is a complete unitary invariant. This means that two c.n.u.\ contractions $T$ and $T'$
are unitarily equivalent if and only if their characteristic functions {\em{coincide}} in the sense that there exist
unitary operators $u: \mathcal{D}_T \to \mathcal{D}_{T'}$ and $u_{*}: \mathcal{D}_{T^*} \to \mathcal{D}_{{T'}^*}$ such that the following diagram commutes for every $z\in\mathbb D$:
\begin{align}\label{coindiagram}
\begin{CD}
\mathcal{D}_T @>\Theta_T(z)>> \mathcal{D}_{T^*}\\
@Vu VV @VVu_{*} V\\
\mathcal{D}_{T'} @>>\Theta_{T'}(z)> \mathcal{D}_{{T'}^*}
\end{CD}.
\end{align}
Theorem \ref{UnitaryInv} below shows that such a result holds for characteristic triples of pairs of
commuting contractions also. First we define a notion of coincidence for such a triple.

A contractive analytic function $(\cD,\cD_*,\Theta)$ is a $\cB(\cD,\cD_*)$-valued analytic function on $\mathbb{D}$ such that
$$
\|\Theta(z)\|\leq 1 \text{ for all } z\in\mathbb{D}.
$$
Such a function is called {\it purely contractive} if $\Theta(0)$ does not preserve the norm of any nonzero
vector, i.e.,
\begin{equation}   \label{pureCAF}
\|\Theta(0)\xi\|_{\cD_*}<\|\xi\|_{\cD} \text{ for all nonzero }\xi\in\cD.
\end{equation}
We note that a Sz.-Nagy--Foias characteristic function $\Theta_T$ is always purely contractive
(see \cite[Section VI.1]{Nagy-Foias}),  and that it is
always the case that a general contractive analytic function $(\cD, \cD_*, \Theta)$ has a block diagonal
decompostiton  $\Theta = \Theta' \oplus \Theta^0$ where $(\cD', \cD_*', \Theta')$ is a unitary constant function
and $(\cD^0, \cD_*^0, \Theta^0)$ is purely contractive.  A key easily checked property of this decomposition
is the following:

\begin{obs}   \label{O:reduction}
 The model space
$$
\cH_{NF} = \begin{bmatrix} H^2(\cD_*) \\ \overline{ \Delta_\Theta L^2(\cD)} \end{bmatrix}
 \ominus \begin{bmatrix} \Theta \\ \Delta_\Theta \end{bmatrix} H^2(\cD)
$$
and the associated model operator
$$
T_{NF} = P_{\cH_{NF}} \begin{bmatrix}  M_z & 0 \\ 0 & M_\zeta \end{bmatrix} \big|_{\cH_{NF}}
$$
remain exactly the same (after some natural identification of respective coefficient spaces) when $\Theta$ is replaced by $\Theta^0$.
\end{obs}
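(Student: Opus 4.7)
The plan is to observe that the hypothesis ``$\Theta = \Theta' \oplus \Theta^0$ with $\Theta'$ a unitary constant'' forces both the defect function and the graph subspace to split cleanly, and then to cancel the ``unitary part'' explicitly inside the model space. Concretely, write $\cD = \cD' \oplus \cD^0$ and $\cD_* = \cD'_* \oplus \cD^0_*$. Because $\Theta'(\zeta) \equiv \Theta'$ is a unitary from $\cD'$ onto $\cD'_*$, we have $I - \Theta^{\prime *}\Theta' = 0$, so
$$\Delta_{\Theta}(\zeta)^2 = I - \Theta(\zeta)^*\Theta(\zeta) = 0_{\cD'} \oplus (I - \Theta^0(\zeta)^*\Theta^0(\zeta)) = 0_{\cD'} \oplus \Delta_{\Theta^0}(\zeta)^2.$$
In particular $\overline{\Delta_\Theta L^2(\cD)}$ is naturally identified with $\overline{\Delta_{\Theta^0} L^2(\cD^0)}$, and the ambient space decomposes as
$$H^2(\cD_*) \oplus \overline{\Delta_\Theta L^2(\cD)} \;=\; H^2(\cD'_*) \;\oplus\; H^2(\cD^0_*) \;\oplus\; \overline{\Delta_{\Theta^0} L^2(\cD^0)}.$$

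Next I would split the graph subspace according to $f = f' \oplus f^0$ with $f' \in H^2(\cD')$ and $f^0 \in H^2(\cD^0)$:
$$\begin{bmatrix}\Theta \\ \Delta_\Theta\end{bmatrix} f \;=\; \Theta' f' \;\oplus\; \Theta^0 f^0 \;\oplus\; \Delta_{\Theta^0} f^0.$$
The key point is that $\Theta'$, being a unitary constant, acts as a unitary multiplier from $H^2(\cD')$ \emph{onto} $H^2(\cD'_*)$; hence as $f'$ ranges over $H^2(\cD')$ the first coordinate sweeps out \emph{all} of $H^2(\cD'_*)$, independently of the choice of $f^0$. Consequently
$$\begin{bmatrix}\Theta \\ \Delta_\Theta\end{bmatrix} H^2(\cD) \;=\; H^2(\cD'_*) \;\oplus\; \left\{\begin{bmatrix}\Theta^0 \\ \Delta_{\Theta^0}\end{bmatrix} f^0 : f^0 \in H^2(\cD^0)\right\},$$
where the two summands are orthogonal inside the ambient space. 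Taking orthogonal complements cancels the $H^2(\cD'_*)$ factor and leaves
$$\cH_{NF} \;=\; \left(H^2(\cD^0_*) \oplus \overline{\Delta_{\Theta^0} L^2(\cD^0)}\right) \ominus \begin{bmatrix}\Theta^0 \\ \Delta_{\Theta^0}\end{bmatrix} H^2(\cD^0),$$
which is precisely the model space built from $\Theta^0$.

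For the model operator, I would then observe that the ambient bilateral/one-sided shift $\operatorname{diag}(M_z, M_\zeta)$ is diagonal with respect to the splitting by $\cD'_* \oplus \cD^0_*$ and by the defect decomposition, so it restricts to the analogous operator on $H^2(\cD^0_*) \oplus \overline{\Delta_{\Theta^0} L^2(\cD^0)}$. Since the cancelled summand $H^2(\cD'_*)$ is invariant under $M_z$ and sits inside the graph subspace, the compression to $\cH_{NF}$ equals the compression computed intrinsically in the $\Theta^0$-model. I do not anticipate any serious obstacle; the only thing to be careful about is the natural identification of coefficient spaces so that ``the same'' means literally the same Hilbert space and operator under a canonical unitary, which is why the statement was phrased with the parenthetical ``after some natural identification of respective coefficient spaces.''
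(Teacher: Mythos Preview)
Your argument is correct and is exactly the natural verification one would write; note, however, that the paper does not actually supply a proof of this observation---it is introduced as ``a key easily checked property'' and left to the reader. Your decomposition of the defect function, the graph subspace, and the ambient shift is precisely what ``easily checked'' is meant to encode, so there is nothing to compare against.
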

  Thus only completely contractive
analytic functions are relevant when discussing Sz.-Nagy--Foias functional models.
For the moment we consider only purely contractive
analytic functions.

\begin{definition}\label{coincidence}
Let $(\mathcal{D},\mathcal{D}_*,\Theta)$, $(\mathcal{D'},\mathcal{D'_*},\Theta')$ be two purely contractive
analytic functions, $\mathbb{G}=(G_1,G_2)$ on $\mathcal{D}_*$, $\mathbb{G}'=(G_1',G_2')$ on $\mathcal{D'_*}$
be two pairs of contractions and $\mathbb{W}=(W_1,W_2)$ on $\overline{\Delta_\Theta L^2(\mathcal{D})}$,
$\mathbb{W}'=(W_1',W_2')$ on $\overline{\Delta_{\Theta'} L^2(\mathcal{D'})}$ be two pairs of commuting
unitaries such that their product is $M_{\zeta}$ on the respective spaces. We say that the two triples $(\mathbb{G},\mathbb{W},\Theta)$ and $(\mathbb{G}',\mathbb{W}',\Theta')$ coincide, if \
\begin{itemize}
  \item[(i)] $(\mathcal{D},\mathcal{D_*},\Theta)$ and $(\mathcal{D'},\mathcal{D'_*},\Theta')$ coincide, i.e.,
there exist unitary operators $u: \mathcal{D} \to \mathcal{D'}$ and $u_{*}: \mathcal{D}_{*} \to \mathcal{D'}_{*}$
such that the diagram (\ref{coindiagram}) commutes with $\Theta_T$ and $\Theta_{T^*}$ in place of $\Theta$
and $\Theta'$, respectively;
\item[(ii)] the unitary operators $u$, $u_*$ have the following intertwining properties:
\begin{eqnarray}
\begin{cases}
\mathbb{G}'=(G_1',G_2')=u_*\mathbb{G}u_*^*=(u_*G_1u_*^*,u_*G_2u_*^*) \text{ and }\\
\mathbb{W}'=(W_1',W_2')=\omega_u\mathbb{W}\omega_u^*=(\omega_uW_1\omega_u^*,\omega_uW_2\omega_u^*),
\end{cases}
\end{eqnarray}
where $\omega_u:\overline{\Delta_{\Theta} L^2(\mathcal{D})}\to\overline{\Delta_{\Theta'} L^2(\mathcal{D'})}$
is the following unitary map induced by $u$:
\begin{eqnarray}\label{omega-u}
\omega_u:=(I_{L^2}\otimes u)|_{\overline{\Delta_{\Theta} L^2(\mathcal{D})}}.
\end{eqnarray}
\end{itemize}
\end{definition}

We shall use the following uniqueness result from \cite{sauAndo} later in this section. For a pair $(T_1,T_2)$ of commuting contractions on $\mathcal{H}$ and $\underline{T}=(T_1,T_2,T_1T_2)$, let
\begin{eqnarray*}
\mathcal{U}_{\underline{T}}&:=&
\{(\Pi,\mathcal{K}, \underline{V}): \underline{V}=(S_1,S_2,V),\; \Pi:\mathcal{H}\to\mathcal{K} \text{ is an isometry such that }\\&& \underline{V}^*\Pi=\Pi\underline{T}^*, \;(\Pi, V)\text{ is the minimal isometric dilation of } T=T_1T_2,\\&&
(S_1,V),\; (S_2,V) \text{ are commuting and } S_1=S_2^*V.\}
\end{eqnarray*}
We next exhibit a concrete example of a member of $\mathcal{U}_{\underline{T}}$ for a given pair $(T_1,T_2)$ of commuting contractions.

\begin{example}\label{TheEg}
{\rm Let $((G_{\sharp1},G_{\sharp2}),(W_{\sharp1},W_{\sharp2}),\Theta_T)$ be the characteristic triple for $(T_1,T_2)$. Let $\Pi_{NF}:\cH\to H^2(\cD_{T^*})\oplus\overline{\Delta_T(L^2(\cD_T))}$ be the isometry as in (\ref{Unf&Pinf}), i.e.,
\begin{align*}
\Pi_{NF}h=\sum_{n=0}^{\infty}z^nD_{T^*}T^{*n}h\oplus u(Qh)=U_{\text{min}}\Pi_Dh.
\end{align*}
Consider the triple $(\Pi_{NF},\cK_{NF},\underline{V}_{NF})$, where
$\underline{V}_{NF}=(S^{NF}_1,S^{NF}_2,V_{NF})$ is as in (\ref{Vs}). It can be checked easily from the definition
that $(S_1^{NF},V_{NF})$ and $(S_2^{NF},V_{NF})$ are commuting and that $S_1^{NF}={S_2^{NF}}^*V_{NF}$.
We have also seen in
\S \ref{S:SNFmodel} that $(\Pi_{NF}, V_{NF})$ is the Sz.-Nagy--Foias minimal isometric dilation of
$T=T_1T_2$.  Therefore with $\cK_{NF}=H^2(\cD_{T^*})\oplus\overline{\Delta_T(L^2(\cD_T))}$, we conclude
that the triple
$(\mathcal{K}_{NF}, \Pi_{NF}, \underline{V}_{NF})$ is in $\mathcal{U}_{\underline{T}}$ for $(T_1,T_2)$.
}\end{example}

The theorem below, proved in \cite{sauAndo}, asserts that any triple $(\cK, \Pi, V)$ in $\cU_T$ is unitarily
equivalent as an element of $\cK_T$ to the model triple  $(\mathcal{K}_{NF},  \Pi_{NF}, \underline{V}_{NF})$.

\begin{thm}\label{uniqueness}
  For a pair $\underline{T}:=(T_1,T_2)$ of commuting contractions, the family $\mathcal{U}_{\underline{T}}$ is
   a singleton set under unitary equivalence, i.e., if $(\mathcal{K}, \Pi,  \underline{V})$ and
   $(\mathcal{K}' ,  \Pi',   \underline{V'})$ are in $\mathcal{U}_T$, then there exists a unitary
   $U:\mathcal{K}\to\mathcal{K'}$ such that
  \begin{eqnarray}\label{IntertwiningU}
  U\underline{V}=\underline{V'}U \text{ and }U(\Pi h)=\Pi'h, \text{ for all }h\in \mathcal{H}.
  \end{eqnarray}
\end{thm}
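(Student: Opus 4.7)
The plan is to reduce to the single-variable uniqueness of minimal isometric dilations and then bootstrap. Since $(\Pi, V)$ and $(\Pi', V')$ are minimal isometric dilations of the single contraction $T = T_1 T_2$, Theorem \ref{T:min-iso-lift} produces a unique unitary $U \colon \cK \to \cK'$ with $U\Pi = \Pi'$ and $UV = V'U$. It remains to verify that this same $U$ intertwines $S_i$ with $S_i'$ for $i = 1, 2$. Since $S_1 = S_2^* V$ and $S_1' = (S_2')^* V'$, the intertwining $US_1 = S_1' U$ follows from $US_2 = S_2'U$ by taking adjoints: the latter gives $US_2^* = (S_2')^* U$, whence $US_1 = US_2^* V = (S_2')^* U V = (S_2')^* V' U = S_1' U$. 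Thus the whole theorem reduces to proving $US_2 = S_2'U$.

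By the minimality of $(\Pi, V)$, the set $\{V^n \Pi h : n \ge 0,\ h \in \cH\}$ spans $\cK$. Using $(S_2, V)$-commutativity, $S_2 V^n \Pi h = V^n S_2 \Pi h$; combined with $U V^n \Pi = (V')^n \Pi'$ and the primed analogue of the commutativity, checking $US_2 = S_2'U$ on this dense set boils down to the single assertion
\begin{equation*}
US_2 \Pi h = S_2' \Pi' h \qquad \text{for every } h \in \cH.
\end{equation*}
To pin this down, I would compute the inner products of $S_2\Pi h$ against the total set $\{V^m \Pi h' : m \ge 0,\ h' \in \cH\}$. For $m=0$, the relation $S_2^* \Pi = \Pi T_2^*$ gives directly $\langle S_2 \Pi h, \Pi h'\rangle = \langle T_2 h, h'\rangle$. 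For $m \ge 1$, the key trick is the chain
\begin{equation*}
S_2^* V^m = (S_2^* V)\, V^{m-1} = S_1 V^{m-1} = V^{m-1} S_1,
\end{equation*}
where the first equality uses $S_1 = S_2^* V$ and the last uses $(S_1, V)$-commutativity. Applying this to $\Pi h'$ and using $V^{*(m-1)} \Pi = \Pi T^{*(m-1)}$ together with $S_1^* \Pi = \Pi T_1^*$ yields
\begin{equation*}
\langle S_2 \Pi h, V^m \Pi h' \rangle = \langle \Pi T^{*(m-1)} h,\, S_1 \Pi h' \rangle = \langle T_1^* T^{*(m-1)} h, h' \rangle.
\end{equation*}
These inner products depend only on the data $(T_1, T_2)$, so the identical formulas hold for $\langle S_2' \Pi' h, (V')^m \Pi' h'\rangle$. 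Since $U$ isometrically identifies $V^m \Pi h'$ with $(V')^m \Pi' h'$, one concludes $US_2 \Pi h = S_2' \Pi' h$, as needed.

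The main obstacle is conceptual rather than computational: one must recognize that the relations $S_1 = S_2^* V$, the $(S_1, V)$-commutativity, and $S_1^* \Pi = \Pi T_1^*$ together completely determine the a priori unknown action of $S_2$ on $\Pi\cH$, via the translation $S_2^* V^m \leftrightarrow V^{m-1} S_1$ for $m \ge 1$. Once that bootstrap is in hand, the classical single-contraction uniqueness of the minimal isometric dilation does the rest, and the uniqueness of the intertwining unitary $U$ carries over automatically from Theorem \ref{T:min-iso-lift}.
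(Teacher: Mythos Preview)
Your argument is correct. The paper itself does not supply a proof of Theorem~\ref{uniqueness}; it is quoted from \cite{sauAndo}, so there is no in-paper proof to compare against line by line. What you have written is a clean, self-contained proof: invoke Theorem~\ref{T:min-iso-lift} to produce the unique unitary $U$ intertwining the minimal isometric dilations $(\Pi,V)$ and $(\Pi',V')$ of $T=T_1T_2$, then verify $US_2=S_2'U$ on the total set $\{V^n\Pi h\}$ by showing the inner products $\langle S_2\Pi h, V^m\Pi h'\rangle$ are determined by $(T_1,T_2)$ alone via the identity $S_2^*V^m=V^{m-1}S_1$ for $m\ge 1$. The deduction of $US_1=S_1'U$ from $US_2=S_2'U$ via $S_1=S_2^*V$ and unitarity of $U$ is also fine. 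Every hypothesis in the definition of $\cU_{\underline{T}}$ is used exactly once: $(S_i,V)$-commutativity to push $S_2$ past $V^n$ and $S_1$ past $V^{m-1}$, the lift relations $S_i^*\Pi=\Pi T_i^*$ and $V^*\Pi=\Pi T^*$ to land back in $\cH$, and $S_1=S_2^*V$ to bootstrap. This is precisely the natural route, and there is nothing to correct.
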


As we have seen in Theorem \ref{T:min-iso-lift}, if $(\Pi,V)$ and $(\Pi',V')$ are any two minimal isometric
dilations of a contraction $T$, then
there is a {\em{unique}} unitary $U$ that intertwines $V$ and $V'$ and $U\Pi=\Pi'$. This means that if $V=V'$, then
$U$ has to be the identity operator.  Let now $(\Pi,\mathcal{K}, \underline{V})$ and
$(\Pi',\mathcal{K'}, \underline{V'})$ be two members in $\mathcal{U}_{\underline{T}}$ for a pair $(T_1,T_2)$
of commuting contractions such that the last entries of $\underline{V}$ and $\underline{V'}$ are the same.
Then by (\ref{IntertwiningU}) we see that  this forces $\underline{V}=\underline{V'}$,
and hence the following corollary is easily obtained.

\begin{corollary}\label{Uniqueness-Cor}
For a pair of commuting contractions $(T_1,T_2)$, let  the triples $(\mathcal{K}, \Pi, \underline{V})$ and
$(\mathcal{K'}, \Pi', \underline{V'})$ be in $\mathcal{U}_{\underline{T}}$ such that the last entries of $\underline{V}$ and $\underline{V'}$ are the same. Then $\underline{V}=\underline{V'}.$
\end{corollary}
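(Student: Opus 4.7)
The plan is to deduce the corollary as an almost immediate consequence of Theorem~\ref{uniqueness} combined with the uniqueness clause of Theorem~\ref{T:min-iso-lift}. First I would apply Theorem~\ref{uniqueness} to the two given triples $(\cK, \Pi, \underline{V})$ and $(\cK', \Pi', \underline{V'})$ in $\cU_{\underline{T}}$ to produce a unitary $U \colon \cK \to \cK'$ satisfying the intertwining relations $U\underline{V} = \underline{V'}U$ and $U\Pi = \Pi'$ of \eqref{IntertwiningU}. Extracting the last component of the first equation yields $UV = V'U$, so $U$ in particular intertwines the underlying minimal isometric dilations $(\Pi, V)$ and $(\Pi', V')$ of the single contraction $T = T_1 T_2$.

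Next I would invoke the hypothesis that the last entries coincide, $V = V'$, which also identifies $\cK$ with $\cK'$. In this situation the two triples $(\Pi, V)$ and $(\Pi', V)$ are two minimal isometric dilations of $T$ on the very same Hilbert space, and the identity operator $I_\cK$ is a unitary that trivially commutes with $V$. The uniqueness clause of Theorem~\ref{T:min-iso-lift} says there is at most one unitary intertwining these two minimal isometric dilations in the sense of \eqref{define-equiv}, so the unitary $U$ produced above must coincide with $I_\cK$ (with $\Pi = \Pi'$ appearing as an automatic by-product). Substituting $U = I_\cK$ back into $U\underline{V} = \underline{V'}U$ collapses the relation to $\underline{V} = \underline{V'}$, which is precisely the conclusion of the corollary.

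The one step that requires care, and which I expect to be the main obstacle, is the justification that $U$ must actually be the identity. The hypothesis $V = V'$ combined with membership in $\cU_{\underline{T}}$ of both triples is exactly what is needed: it places us within the scope of Theorem~\ref{T:min-iso-lift} applied to a single isometric lift viewed as a dilation of $T$ in two possibly different ways, and the uniqueness clause leaves no room for any nontrivial intertwining unitary. Once that point is settled, the intertwining $US_j = S_j'U$ immediately degenerates to $S_j = S_j'$ for $j=1,2$, completing the proof.
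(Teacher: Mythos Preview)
Your argument is essentially identical to the paper's: invoke Theorem~\ref{uniqueness} to obtain the intertwining unitary $U$, observe from $V=V'$ that $U$ commutes with $V$, and then appeal to the uniqueness clause of Theorem~\ref{T:min-iso-lift} to force $U=I_\cK$, whence $\underline{V}=\underline{V'}$.

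One small caution, present equally in the paper's sketch: for $I_\cK$ to qualify as the unique intertwiner of $(\Pi,V)$ and $(\Pi',V)$ in the sense of \eqref{define-equiv}, it must satisfy not only $I_\cK V = V I_\cK$ (which is trivial) but also $I_\cK\Pi=\Pi'$, i.e.\ $\Pi=\Pi'$, and this is not literally among the stated hypotheses. So the step ``the unitary $U$ produced above must coincide with $I_\cK$'' is taken on the same faith in your write-up as in the paper's; neither supplies the missing verification that $\Pi=\Pi'$.
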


\begin{thm}\label{UnitaryInv}
Let $(T_1,T_2)$ and $(T_1',T_2')$ be two pairs of commuting contractions such that their products $T=T_1T_2$ and $T'=T_1'T_2'$ are c.n.u.\ contractions. Then $(T_1,T_2)$ and $(T_1',T_2')$ are unitarily equivalent if and only if their characteristic triples coincide.
\end{thm}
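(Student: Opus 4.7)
The plan is to handle the two implications separately: the forward direction will rely on the uniqueness in Corollary \ref{Uniqueness-Cor}, while the backward direction will directly build a unitary between the Sz.-Nagy--Foias functional model spaces and invoke Theorem \ref{Thm:SNFmodelPair}.

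For the forward direction, given $\phi \colon \cH \to \cH'$ unitary with $\phi T_i = T_i' \phi$, the standard Sz.-Nagy--Foias theory (\cite[Chapter VI]{Nagy-Foias}) applied to the equivalence $\phi T = T' \phi$ produces unitaries $u \colon \cD_T \to \cD_{T'}$ and $u_* \colon \cD_{T^*} \to \cD_{T'^*}$ satisfying $u D_T = D_{T'} \phi$, $u_* D_{T^*} = D_{T'^*} \phi$, and $u_* \Theta_T(z) = \Theta_{T'}(z) u$. Define
\begin{equation*}
\tilde\phi := (I_{H^2} \otimes u_*) \oplus \omega_u \colon \cK_{NF} \to \cK_{NF}',
\end{equation*}
where $\omega_u$ is as in \eqref{omega-u}. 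The coincidence of the characteristic functions together with the identity $\Delta_{T'} u = u \Delta_T$ implies that $\tilde\phi$ carries $\{\Theta_T f \oplus \Delta_T f \colon f \in H^2(\cD_T)\}$ onto its primed counterpart, so $\tilde\phi \cH_{NF} = \cH_{NF}'$; the diagonal form also gives $\tilde\phi V_{NF} = V_{NF}' \tilde\phi$. A direct check confirms that the pulled-back triple $(\cK_{NF}, \tilde\phi^* \Pi_{NF}' \phi, \tilde\phi^* \underline{V}_{NF}' \tilde\phi)$ belongs to $\mathcal{U}_{\underline{T}}$, alongside the triple $(\cK_{NF}, \Pi_{NF}, \underline{V}_{NF})$ from Example \ref{TheEg}; since both have the same last component $V_{NF}$ of $\underline{V}$, Corollary \ref{Uniqueness-Cor} forces $\tilde\phi S_i^{NF} \tilde\phi^* = S_i^{NF'}$ for $i = 1, 2$. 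Expanding this block identity via the explicit form $S_i^{NF} = M_{G_{\sharp i}^* + z G_{\sharp j}} \oplus W_{\sharp i}$ simultaneously yields $G_{\sharp i}' = u_* G_{\sharp i} u_*^*$ and $W_{\sharp i}' = \omega_u W_{\sharp i} \omega_u^*$, which together with the $\Theta$-coincidence give coincidence of the characteristic triples.

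For the backward direction, assume the triples coincide via $u, u_*$ and form the same $\tilde\phi$. The coincidence of the $\Theta$'s again gives $\tilde\phi \cH_{NF} = \cH_{NF}'$, while the coincidence of the $\mathbb{G}$- and $\mathbb{W}$-components yields $\tilde\phi S_i^{NF} = S_i^{NF'} \tilde\phi$ by direct block computation. Since $\cH_{NF}$ (respectively $\cH_{NF}'$) is $(S_i^{NF})^*$-invariant (respectively $(S_i^{NF'})^*$-invariant) by Theorem \ref{Thm:SNFmodelPair}, compressing $\tilde\phi$ to these coinvariant subspaces produces a unitary equivalence between the functional-model pairs; composing with the unitary equivalences provided by Theorem \ref{Thm:SNFmodelPair} on both sides establishes $(T_1, T_2) \cong (T_1', T_2')$.

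The main obstacle is the forward direction and specifically the $\mathbb{W}$-component. The $\mathbb{G}$-part would follow immediately from the intrinsic identity $T_i^* - T_j T^* = D_{T^*} G_{\sharp i} D_{T^*}$ of Proposition \ref{P:fundamental}, but $W_{\sharp 1}$ and $W_{\sharp 2}$ do not satisfy an analogous intrinsic operator identity on $\cH$; they live on the residual space $\overline{\Delta_T L^2(\cD_T)}$ and arise from a nontrivial dilation-theoretic construction lifting the dual pair $(X_1, X_2)$ to commuting unitaries. The pull-back-and-invoke-Corollary-\ref{Uniqueness-Cor} strategy bypasses this difficulty by letting the intrinsic uniqueness of the commutative pair $(S_1^{NF}, S_2^{NF})$ given the product lift $V_{NF}$ do the work, simultaneously delivering the required conjugation laws for the $\mathbb{G}$'s and $\mathbb{W}$'s.
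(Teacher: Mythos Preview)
Your proof is correct and follows essentially the same strategy as the paper's: in both directions the key unitary is $(I_{H^2}\otimes u_*)\oplus\omega_u$, and in the forward direction the $\mathbb{W}$-coincidence is extracted by producing a second member of $\mathcal{U}_{\underline{T}}$ with the same last entry $V_{NF}$ and invoking Corollary~\ref{Uniqueness-Cor}.

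There is one small organizational difference worth noting. The paper first establishes the $\mathbb{G}$-coincidence $u_*G_{\sharp i}=G'_{\sharp i}u_*$ directly from Proposition~\ref{P:fundamental} (the intrinsic equations $T_i^*-T_jT^*=D_{T^*}G_{\sharp i}D_{T^*}$), and then uses this to build a triple $(\Pi'',\mathcal{K}_{NF},\underline{V}'')$ in $\mathcal{U}_{\underline{T}}$ whose first block already carries the \emph{original} $G_{\sharp i}$'s and whose second block carries the pulled-back $W$'s; Corollary~\ref{Uniqueness-Cor} is then invoked only to pin down the $\mathbb{W}$-part. You instead pull back the \emph{entire} triple $\underline{V}_{NF}'$ through $\tilde\phi$, check that $(\mathcal{K}_{NF},\tilde\phi^*\Pi_{NF}'\phi,\tilde\phi^*\underline{V}_{NF}'\tilde\phi)$ lies in $\mathcal{U}_{\underline{T}}$ (which follows purely from the intertwining $\phi T_i=T_i'\phi$ and $\tilde\phi V_{NF}=V_{NF}'\tilde\phi$, without Proposition~\ref{P:fundamental}), and then let Corollary~\ref{Uniqueness-Cor} deliver both the $\mathbb{G}$- and $\mathbb{W}$-coincidences simultaneously by reading off the two diagonal blocks. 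Your route is marginally more economical in that it does not require a separate appeal to Proposition~\ref{P:fundamental}; the paper's route has the advantage of making explicit that the $\mathbb{G}$-part is determined by an elementary operator identity on $\mathcal{H}$, independent of any dilation-theoretic machinery. The backward direction is identical in both arguments.
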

\begin{proof}
Let $(T_1,T_2)$ on $\mathcal{H}$ and $(T_1',T_2')$ on $\mathcal{H}'$ be unitarily equivalent via the unitary operator $U:\mathcal{H}\to\mathcal{H}'$ and $((G_{\sharp1},G_{\sharp2}),(W_{\sharp1},W_{\sharp2}),\Theta_T)$ and $((G'_{\sharp1},G'_{\sharp2}),(W_{\sharp1}',W_{\sharp2}'),\Theta_{T'})$ be their characteristic triples, respectively. It is easy to see that $UD_T=D_{T'}U$ and $UD_{T^*}=D_{T'^*}U$ and that the unitaries
\begin{eqnarray}
u:=U|_{\mathcal{D}_T}: \mathcal{D}_{T} \to \mathcal{D}_{T'}\text{ and }u_{*}:=U|_{\mathcal{D}_{T^*}}: \mathcal{D}_{T^*} \to \mathcal{D}_{T'^{*}}
\end{eqnarray} have the following property:
\begin{eqnarray}\label{coincd}
u_*\Theta_T=\Theta_{T'}u.
\end{eqnarray} Hence $\Theta_T$ and $\Theta_{T'}$ coincide. We now show that the unitary $u_*$ above plays the role in unitary equivalence of $(G_{\sharp1},G_{\sharp2})$ and $(G_{\sharp1}',G_{\sharp2}')$.

To this end we  note that by Proposition \ref{P:fundamental} the following set of equations is satisfied:
$$
 T_i^* - T_j T^* = D_{T^*} G_{\sharp i} D_{T^*},   \quad
 T_i^{\prime*} - T'_j T^{\prime *} = D_{T^{\prime *}} G'_{\sharp i} D_{T^{\prime *}} \text{ for }
 (i,j) = (1,2) \text{ or } (2,1).
 $$
It then follows that
\begin{eqnarray}\label{fundequiv}
u_*(G_{\sharp1},G_{\sharp2})=(G_{\sharp1}',G_{\sharp2}')u_*.
\end{eqnarray} We have noticed in Example \ref{TheEg} that for a pair $(T_1,T_2)$ of commuting contractions, the triple $(\Pi_{NF},\mathcal{K}_{NF}, \underline{V}_{NF})$ is always in $\mathcal{U}_{\underline{T}}$. Let $(\Pi'_{NF},\mathcal{K}'_{NF}, \underline{V'}_{NF})$ be the corresponding triple for $(T_1',T_2')$. Let us denote by $\Pi''$ the following isometry
\begin{align}\label{Pi''}
\Pi'':=((I_{H^2}\otimes u_*^*)\oplus \omega_u^*)\Pi_{NF}'U:\mathcal{H}\to H^2(\mathcal{D}_{T^*})\oplus\overline{\Delta_{T}L^2(\mathcal{D}_{T})}
\end{align}where $\omega_u:\overline{\Delta_{T}L^2(\mathcal{D}_T)}\to\overline{\Delta_{T'}L^2(\mathcal{D}_{T'})}$ is the unitary $\omega_u=(I_{L^2}\otimes u)|_{\overline{\Delta_{T}L^2(\mathcal{D}_T)}}$. We observe that the triple $(\Pi'',\mathcal{K}_{NF},\underline{V''})$ is also in $\mathcal{U}_{\underline{T}}$, where with  $(W_1'',W_2'')=\omega_u^*(W_{\sharp1}',W_{\sharp2}')\omega_u$,
$$
\underline{V''}:=(M_{G_{\sharp1}^*+zG_{\sharp2}}\oplus W_1'',M_{G_{\sharp2}^*+zG_{\sharp1}}\oplus W_2'',M_z\oplus M_{\zeta}|_{\overline{\Delta_T(L^2(\cD_T))}}),
$$
because using (\ref{fundequiv}) we have for $(i,j)=(1,2)$ or $(2,1)$
\begin{eqnarray*}
\Pi''T_i^*&=&((I_{H^2}\otimes u_*^*)\oplus \omega_u^*)\Pi_{NF}'{T'_i}^*U \text{ (by \eqref{Pi''})} \\
&=&((I_{H^2}\otimes u_*^*)\oplus \omega_u^*)(M_{{G'_{\sharp i}}^*+z{G'_{\sharp j}}}^*\oplus {W'_{\sharp i}}^*)\Pi_{NF}'U\\
&=&(M_{G_{\sharp i}^*+zG_{\sharp j}}\oplus {W''_i}^*)((I_{H^2}\otimes u_*^*)\oplus \omega_u^*)\Pi_{NF}'U
\text{ (by \eqref{fundequiv}).}
\end{eqnarray*}
Now since the last entry of $\underline{V''}$ is the same as that of $\underline{V}_{NF}$, applying Corollary \ref{Uniqueness-Cor}, we get
$$
(W_1'',W_2'')=\omega_u^*(W_{\sharp1}',W_{\sharp2}')\omega_u=(W_{\sharp1},W_{\sharp2}),
$$
which together with equations (\ref{coincd}) and (\ref{fundequiv}) establish the first part of the theorem.

%For the converse part we first observe that if $((G_1,G_2),(W_1,W_2),\Theta_T)$  is the characteristic triple for a pair $(T_1,T_2)$ of commuting contractions, then from \begin{align}\label{NFmodelPair}
%\begin{cases}
%(T_1,T_2,T_1T_2) \text{ is unitarily equivalent to }\\
%P_{\operatorname{Ran}\Pi_{NF}}(M_{G_1^*+zG_2}\oplus W_1,M_{G_2^*+zG_1}\oplus W_2,M_z\oplus M_{\zeta}|_{\overline{\Delta_T(L^2(\cD_T))}})|_{\operatorname{Ran}\Pi_{NF}}\\
%=P_{\cH_{NF}}(M_{G_1^*+zG_2}\oplus W_1,M_{G_2^*+zG_1}\oplus W_2,M_z\oplus M_{\zeta}|_{\overline{\Delta_T(L^2(\cD_T))}})|_{\cH_{NF}},
%\end{cases}
%\end{align}where
%$$\cH_{NF}=\left(H^2(\cD_{T^*}\oplus\overline{\Delta_T(L^2(\cD_T))})\right)\ominus\{\Theta_Tf\oplus\Delta_Tf:f\in H^2(\cD_T)\}.$$(\ref{Eg1}) we have

 Conversely, let $((G_{\sharp1},G_{\sharp2}),(W_{\sharp1},W_{\sharp2}),\Theta_T)$ and $((G'_{\sharp1},G'_{\sharp2}),(W_{\sharp1}',W_{\sharp2}'),\Theta_{T'})$ be the characteristic triples of $(T_1,T_2)$ and $(T_1',T_2')$, respectively and suppose the respective characteristic triples coincide. Thus there exist unitaries $u:\mathcal{D}_T\to\mathcal{D}_{T'}$ and $u_*:\mathcal{D}_{T^*}\to\mathcal{D}_{T'^*}$ such that part $(i)$ and part $(ii)$ in Definition \ref{coincidence} hold. Let $\omega_u$ be the unitary induced by $u$ as defined in $(\ref{omega-u})$. Then it is easy to see that the unitary
\begin{eqnarray}\label{unitary-coin}
(I_{H^2}\otimes u_*)\oplus \omega_u:H^2(\mathcal{D}_{T^*})\oplus \overline{\Delta_TL^2(\mathcal{D}_T)}\to H^2(\mathcal{D}_{T'^*})\oplus \overline{\Delta_{T'}L^2(\mathcal{D}_{T'})}
\end{eqnarray}intertwines
\begin{align*}
    &\underline{V}=(M_{G_{\sharp1}^*+zG_{\sharp2}}\oplus W_{\sharp1},M_{G_{\sharp2}^*+zG_{\sharp1}}\oplus W_{\sharp 2},M_z\oplus M_{\zeta}|_{\overline{\Delta_T(L^2(\cD_T))}}) \text{ and }\\
    &\underline{V'}=(M_{G_{\sharp1}'^*+zG'_{\sharp2}}\oplus W_{\sharp1}',M_{G_{\sharp2}'^*+zG'_{\sharp1}}\oplus W_{\sharp2}',M_z\oplus M_{\zeta}|_{\overline{\Delta_{T'}(L^2(\cD_{T'}))}}).
\end{align*}Also, the unitary in $(\ref{unitary-coin})$ clearly takes the space $\{\Theta_Tf\oplus \Delta_T\tilde{f}: f\in H^2(\mathcal{D}_T)\}$ onto $\{\Theta_{T'}f\oplus \Delta_{T'}\tilde{f}: f\in H^2(\mathcal{D}_{T'})\}$ and hence
$$
\big(H^2(\mathcal{D}_{T^*})\oplus \overline{\Delta_T L^2(\mathcal{D}_T)}\big)\ominus \{\Theta_Tf\oplus \Delta_T\tilde{f}: f\in H^2(\mathcal{D}_T)\}
$$onto
$$
\big(H^2(\mathcal{D}_{T'^*})\oplus \overline{\Delta_{T'} L^2(\mathcal{D}_{T'})}\big)\ominus \{\Theta_{T'}f\oplus \Delta_{T'}\tilde{f}: f\in H^2(\mathcal{D}_{T'})\}.
$$This implies that the functional models for $(T_1,T_2)$ and $(T_1',T_2')$ as in (\ref{NFmodelPair}) are unitarily equivalent and hence by (\ref{NFmodelPair}) the pairs $(T_1,T_2)$ and $(T_1',T_2')$ are unitarily equivalent also.
\end{proof}
%The following model theorem was proved in \cite{sauAndo}.
%
%\begin{thm}[Douglas Model]\label{AndoDilation}
%Let $(T_1,T_2)$ be a pair of commuting contractions on a Hilbert space $\mathcal{H}$ and $(\mathcal{F}_*,\Gamma,P_*,U_*)$ be the And\^o tuple for $(T_1^*,T_2^*)$. Let $(\Pi_D,V_D)$ on $H^2(\mathcal{D}_{T^*})\oplus \mathcal{R}$ be the minimal isometric dilation of $T=T_1T_2$ constructed by Douglas as explained in
%Section \ref{S:Douglas}. Let $G_1$, $G_2$ on $\mathcal{D}_{T^*}$ be the contractions
%$$(G_1,G_2):=(\Gamma^*P_*^\perp U_*\Gamma,\Gamma^*U_*^*P_*\Gamma).$$ Let $(W_1,W_2)$ be the pair of commuting unitaries on $\mathcal{R}$ obtained from $(T_1,T_2)$ as in the discussion around
%\eqref{prop1}--\eqref{TheSpace-R}. Then
%$$
%\underline{T}\text{ is unitarily equivalent to }P_{\mathcal{H}_D}(\underline{M}_{G_1,G_2}\oplus \underline{W})|_{\mathcal{H}_D},
%$$where $\mathcal{H}_D\subset H^2(\mathcal{D}_{T^*})\oplus \mathcal{R}$ denotes the range of $\Pi_D$. In fact,
%\begin{eqnarray}\label{DouglasLabel}
%\Pi_D^*(\underline{M}_{G_1,G_2}\oplus \underline{W})=\underline{T}\Pi_D^*.
%\end{eqnarray}
%\end{thm}

\subsection{Admissible triples}
In this subsection, we consider general contractive analytic functions $(\cD, \cD_*, \Theta)$ and do not insist
that $\Theta$ be purely contractive.
We start with a  contractive analytic function $(\cD,\cD_*,\Theta)$, a pair of commuting unitaries $(W_1,W_2)$ and a pair of contractions $(G_1,G_2)$ and investigate when the triple $((G_1,G_2),(W_1,W_2),\Theta)$ gives rise to a pair $(T_1,T_2)$ of commuting contractions such that $T=T_1T_2$ is completely non-unitary.

We note from equation (\ref{Eg1}) that the characteristic triple $((G_{\sharp1},G_{\sharp2}),(W_{\sharp1},W_{\sharp2}),\Theta_T)$ for a pair $(T_1,T_2)$ of commuting contractions satisfies the following set of {\em admissibility
conditions}, where $(i,j)=(1,2),(2,1)$ and we write $(\cD,\cD_*,\Theta)$ in place of $(\cD_{T},\cD_{T^*},\Theta_T)$ and $\Delta_\Theta$ in place of $\Delta_T$:
\smallskip

\begin{definition}  \label{D:admis-cond} {\rm  \textbf{Admissibility conditions:}
 \begin{enumerate}
\item each of $M_{G_{\sharp i}^* + z G_{\sharp j}} \oplus W_{\sharp i}$ is a contraction,
\item $W_{\sharp1}W_{\sharp2}=W_{\sharp2}W_{\sharp1}=M_{\zeta}|_{\overline{\Delta_\Theta L^2(\cD)}}$,
\item $\cQ_{NF}=(\operatorname{Ran}\Pi_{NF})^\perp=\{\Theta f\oplus\Delta_\Theta f:f\in H^2(\cD)\}$ is
jointly invariant under
$(M_{G_{\sharp1}^* + z G_{\sharp2}} \oplus W_{\sharp1},M_{G_{\sharp2}^* + z G_{\sharp1}} \oplus W_{\sharp2},M_z\oplus M_{\zeta}|_{\overline{\Delta_\Theta L^2(\cD)}})$ and
\item with $\cH_{NF}=H^2(\cD_{*})\oplus\overline{\Delta_\Theta L^2(\cD)}\ominus\cQ_{NF}$ we have
$$(M_{G_{\sharp i}^* + z G_{\sharp j}}^*\oplus {W_{\sharp i}}^*)(M_{G_{\sharp j}^* + z G_{\sharp i}}^*\oplus {W_{\sharp j}}^*)|_{\cH_{NF}}=(M_z^*\oplus M_{\zeta}^*|_{\overline{\Delta_\Theta L^2(\cD)}})|_{\cH_{NF}}.$$
\end{enumerate}
}\end{definition}

This motivates us to define the following.
\begin{definition}
Let $(\mathcal{D},\mathcal{D}_*,\Theta)$ be a  contractive analytic function and $(G_1,G_2)$ on $\mathcal{D}_*$
be a pair of contractions. Let $(W_1,W_2)$ be a pair of commuting unitaries on $\overline{\Delta_\Theta L^2(\mathcal{D})}$. We say that the triple $((G_1,G_2),(W_1,W_2),\Theta)$ is admissible if it satisfies
the admissibility conditions $(1)$--$(4)$  in Definition \ref{D:admis-cond}.
% for $(i,j)=(1,2)$ or $(2,1)$,
%\begin{enumerate}
%\item $M_{G_i^* + z G_j} \oplus W_i$ is a contraction
%\item $W_1W_2=M_{\zeta}|_{\overline{\Delta_\Theta L^2(\cD)}}$
%\item the space $\mathcal{Q}_\Theta:=\{\Theta f\oplus \Delta_\Theta \tilde{f}:f\in H^2(\mathbb{D})\}$ is joint invariant under
%$$
%(M_{G_1^*+zG_2}\oplus W_1,M_{G_2^*+zG_1}\oplus W_2,M_z\oplus M_{\zeta}|_{\overline{\Delta_\Theta(L^2(\cD))}})\text{ and}
%$$
%\item with $\cH_\Theta:=\left(H^2(\cD_*)\oplus\overline{\Delta_\Theta L^2(\cD)}\right)\ominus\cQ_\Theta$,
%\begin{eqnarray}\label{ProdContra}
%(M_{G_i^* + z G_j}^*\oplus {W_i}^*)(M_{G_j^* + z G_i}^*\oplus {W_j}^*)|_{\cH_\Theta}=(M_z^*\oplus M_{\zeta}^*|_{\overline{\Delta_\Theta L^2(\cD)}})|_{\cH_\Theta}.
%\end{eqnarray}
%\end{enumerate}
We then say that the triple
\begin{align}\label{AdmisFuncModel}
\nonumber&({\bf T}_1,{\bf T}_2,{\bf T}_1{\bf T}_2)\\
&:=P_{\mathcal{H}_\Theta}(M_{G_1^*+zG_2}\oplus W_1,M_{G_2^*+zG_1}\oplus W_2,M_z\oplus M_{\zeta}|_{\overline{\Delta_\Theta(L^2(\cD))}})|_{\mathcal{H}_\Theta}
\end{align}
is the functional model associated with the admissible triple $((G_1,G_2),(W_1,W_2),\Theta)$.
\end{definition}

Let us say that the admissible triple $({\mathbb G}, {\mathbb W}, \Theta)$ is {\em pure} if its
last component $\Theta$ is a purely contractive analytic function.  Then we have the following analogue
of Observation \ref{O:reduction} for the Sz.-Nagy--Foias model.

\begin{proposition}   \label{P:pure-adm-triple}
We suppose that $((G_1, G_2), (W_1, W_2), \Theta)$ is an admissible triple and that $\Theta$ has a
(possibly nontrivial) decomposition $\Theta = \Theta' \oplus \Theta^0$ with $(\cD', \cD'_*, \Theta')$ a unitary
constant function and $\Theta^0$ a purely contractive analytic function.
Then there is an admissible triple of the form $((G_1^0, G_2^0),
(W_1^0, W_2^0), \Theta^0)$ so that the functional model for $((G_1, G_2), (W_1, W_2), \Theta)$
is unitarily equivalent to the functional model for $((G_1^0, G_2^0), (W_1^0, W_2^0), \Theta^0)$.
\end{proposition}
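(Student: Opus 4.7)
The plan is to extend the single-operator reduction of Observation \ref{O:reduction} to the full admissible triple. Decompose the coefficient spaces as $\cD = \cD' \oplus \cD^0$ and $\cD_* = \cD'_* \oplus \cD^0_*$ so that $\Theta = \Theta' \oplus \Theta^0$ with $\Theta'$ a unitary constant. Then $\Delta_{\Theta'} = 0$, so $\overline{\Delta_\Theta L^2(\cD)} = \overline{\Delta_{\Theta^0} L^2(\cD^0)}$; and since $\Theta' H^2(\cD') = H^2(\cD'_*)$, the subspace $H^2(\cD'_*) \oplus 0$ lies entirely in $\cQ_{NF}$. A direct orthogonality computation then gives
\[
\cH_\Theta \;=\; \{0\}_{H^2(\cD'_*)} \oplus \cH_{\Theta^0},
\]
yielding a natural unitary identification $\iota \colon \cH_\Theta \to \cH_{\Theta^0}$.

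Next I would define the reduced triple by compression: writing $G_i = \sbm{ A_i & B_i \\ C_i & D_i }$ with respect to $\cD_* = \cD'_* \oplus \cD^0_*$, set $G_j^0 := D_j = P_{\cD^0_*}\, G_j\,|_{\cD^0_*}$ and $W_j^0 := W_j$ (acting on $\overline{\Delta_{\Theta^0} L^2(\cD^0)} = \overline{\Delta_\Theta L^2(\cD)}$). For $0 \oplus g \oplus h \in \cH_\Theta$, a direct computation yields
\[
(M_{G_i^* + zG_j} \oplus W_i)(0 \oplus g \oplus h) \;=\; (C_i^* + z B_j)g \,\oplus\, (D_i^* + z D_j)g \,\oplus\, W_i h.
\]
The first summand lies in $H^2(\cD'_*) \subset \cQ_{NF}$ and so is killed by $P_{\cH_\Theta}$; the remainder equals $P_{\cH_{\Theta^0}}[(G_i^{0*} + z G_j^0)g \oplus W_i^0 h]$ under $\iota$. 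Hence $\iota$ intertwines the two functional models, and unitary equivalence is automatic once the reduced triple is shown to be admissible.

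It remains to verify the four admissibility conditions for $((G_1^0, G_2^0), (W_1^0, W_2^0), \Theta^0)$. Condition (1) holds because $G_i^{0*} + \zeta G_j^0$ is the compression to $\cD^0_*$ of the contraction $G_i^* + \zeta G_j$ and $W_i^0$ is unitary; (2) transfers verbatim since $W_j^0 = W_j$. For (3), given $f^0 \in H^2(\cD^0)$ I would apply the original invariance to $f = 0 \oplus f^0$ to produce $g = g' \oplus g^0 \in H^2(\cD)$, then read off from the $\cD^0_*$- and $L^2(\cD^0)$-components that
\[
(M_{G_i^{0*} + z G_j^0} \oplus W_i^0)(\Theta^0 f^0 \oplus \Delta_{\Theta^0} f^0) \;=\; \Theta^0 g^0 \oplus \Delta_{\Theta^0} g^0;
\]
condition (4) then follows from (3) together with the intertwining above. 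I expect the main delicate point to be the bookkeeping in (3): one must verify that the off-diagonal blocks $B_i$ and $C_i$ contribute only to the $\Theta' g'$ slot when the original invariance is unpacked, so that the $\cD^0_*$- and $L^2(\cD^0)$-components depend only on $g^0$ and give a genuine reduced invariance relation. Once this is in hand, the unitary equivalence of functional models is immediate from $\iota$.
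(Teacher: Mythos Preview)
Your approach is correct and genuinely different from the paper's. You argue by direct compression: you identify $\cH_\Theta$ with $\{0\}\oplus\cH_{\Theta^0}$, define $G_j^0$ as the $\cD^0_*$--block of $G_j$ and $W_j^0=W_j$, and then verify the admissibility conditions by hand, reading off the needed identities from the block decomposition of $G_i^*+zG_j$. The paper instead argues indirectly through the model theory already built: it takes the functional-model pair $(\mathbf T_1,\mathbf T_2)$ of the given triple, notes (via \cite[Theorem VI.3.1]{Nagy-Foias}) that $\mathbf T=\mathbf T_1\mathbf T_2$ is c.n.u.\ with characteristic function coinciding with $\Theta^0$, forms the \emph{characteristic triple} $\widetilde\Xi$ of $(\mathbf T_1,\mathbf T_2)$, and then transports $\widetilde\Xi$ along the coincidence $\Theta_{\mathbf T}\cong\Theta^0$ to obtain $\Xi^0$; Theorem~\ref{Thm:SNFmodelPair} then gives the unitary equivalence for free, and admissibility of $\Xi^0$ needs no separate check since characteristic triples are automatically admissible. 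Your route is more elementary and yields an explicit description of $(G_1^0,G_2^0)$ as compressions; the paper's route avoids all the block-matrix bookkeeping you flag as delicate and, as a bonus, identifies $\Xi^0$ as (a transported copy of) the characteristic triple of the functional model rather than merely \emph{some} admissible triple. One small point: your verification of condition (4) should really pass through the adjoints rather than the compressions, using that condition (3) makes $\cH_{\Theta^0}$ coinvariant so that $(M_{G_i^{0*}+zG_j^0}\oplus W_i^0)^*|_{\cH_{\Theta^0}}$ is exactly the adjoint of your compressed model operator; once that is said, the intertwining via $\iota$ carries the original condition (4) over directly.
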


We shall refer to  $((G_1^0, G_2^0), (W_1^0, W_2^0), \Theta^0)$ as the {\em pure part} of
$((G_1, G_2), (W_1, W_2), \Theta)$.

\begin{proof}  We suppose that $((G_1, G_2), (W_1, W_2), \Theta)$ is an admissible triple and that $\Theta$
has a (possibly nontrivial) decomposition $\Theta = \Theta' \oplus \Theta^0$ with
$(\cD', \cD'_*, \Theta')$ a unitary constant function and $(\cD^0, \cD_*^0, \Theta^0)$ a purely contractive
analytic function.  Let
$$
\cH_\Theta = \cK_\Theta \ominus \begin{bmatrix} \Theta \\ \Delta_\Theta \end{bmatrix} H^2(\cD)
$$
be the Sz.-Nagy--Foias functional model space associated with $\Theta$
(and hence also the functional model space associated with the admissible triple
$((G_1, G_2), (W_1, W_2), \Theta)$), and let
$$
({\mathbf T}_1^*, {\mathbf T}_2^*, {\mathbf T}^*) =
\left((M_{G_1^* + z G_2} \oplus W_1)^*, (M_{G_2^* + z G_1} \oplus W_2)^*,
(M_z \oplus M_\zeta|_{\overline{\Delta_\Theta L^2(\cD)}})^* \right) \big|_{\cH(\Theta)}
$$
be the associated functional-model triple of contraction operators. (with ${\mathbf T} = {\mathbf T}_1 {\mathbf T}_2$).
As a result of \cite[Theorem VI.3.1]{Nagy-Foias}, we know that ${\mathbf T}$ is c.n.u.\ with
characteristic function $\Theta_{\mathbf T}$ coinciding with $\Theta^0$.  Thus the characteristic triple for
$({\mathbf T}_1^*, {\mathbf T}_2^*, {\mathbf T}^*)$ has the form
$$
\widetilde \Xi : = ((\widetilde G_1, \widetilde G_2), (\widetilde W_1, \widetilde W_2), \Theta_{\mathbf T})
$$
and by Theorem \ref{Thm:SNFmodelPair} it follows that $({\mathbf T}_1, {\mathbf T}_2, {\mathbf T})$
is unitarily equivalent to the model operators
associated with $\widetilde \Xi$.  As already noted, $\Theta_{\mathbf T}$ coincides with $\Theta^0$;
hence there are unitary operators $u \colon \cD_T \to \cU^0$, $u_* \colon \cD_{T^*} \to \cU_*^0$ so that
$$
 \Theta^0(z) u = u_* \Theta_T(z) \text{ for all } z \in {\mathbb D}.
$$
Define operators $G_1^0$, $G_2^0$ on $\cD_*^0$ and $W_1^0$, $W_2^0$ on
$\overline{\Delta_{\Theta^0} L^2(\cD^0)}$ by
$$
  G_i^0 = u_* \widetilde G_i u_*^*, \quad W_i^0 = (u \otimes I_{L^2} \widetilde W_i (u^* \otimes I_{L^2})
$$
for $i = 1,2$.  Then by construction the triple
$$
  \Xi^0 = \left( (G_1^0, G_2^0), (W_1^0, W_2^0), \Theta^0 \right)
 $$
 coincides with $\widetilde \Xi$ and hence is also admissible.  Then by Theorem \ref{UnitaryInv}
 the commutative contractive pair $({\mathbf T}_1, {\mathbf T}_2)$ is also unitarily equivalent
 to the functional-model commutative contractive pair associated with the admissible triple $\Xi^0$.
This completes the proof of Proposition \ref{P:pure-adm-triple}.
\end{proof}

For $\Theta$ a purely contractive analytic function, we have the following result.

\begin{thm}\label{AdmisCharc}
Let $(\mathcal{D},\mathcal{D}_*,\Theta)$ be a purely contractive analytic function, $(G_1,G_2)$ on $\mathcal{D}_*$ be a pair of contractions and $(W_1,W_2)$ on $\overline{\Delta_\Theta L^2(\mathcal{D})}$ be a pair of
commuting unitaries such that their product is $M_{\zeta}|_{\overline{\Delta_\Theta L^2(\cD)}}$.  Then the  triple
$((G_1,G_2),(W_1,W_2),\Theta)$ is admissible if and only if it is the characteristic triple for
some pair of commuting contractions with their product being a c.n.u. contraction. In fact,
$((G_1,G_2),(W_1,W_2),\Theta)$ coincides with the characteristic triple of its functional model as
defined in (\ref{AdmisFuncModel}).
\end{thm}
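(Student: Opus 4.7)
\textbf{Proof proposal for Theorem \ref{AdmisCharc}.}  The ``only if'' direction is essentially already built into the set-up: the list of admissibility conditions (1)--(4) is precisely what was read off from Theorem \ref{Thm:SNFmodelPair} and equation \eqref{Eg1}, together with the fact that the last component of $\underline{V}_{NF}$ from Example \ref{TheEg} is $M_z\oplus M_\zeta$. So I would open the proof with a short paragraph recording this observation, citing those earlier identities directly.

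For the ``if'' direction, let $((G_1,G_2),(W_1,W_2),\Theta)$ be an admissible triple with $\Theta$ purely contractive. I would first set
$$
\cK_\Theta = H^2(\cD_*)\oplus \overline{\Delta_\Theta L^2(\cD)}, \qquad
\cH_\Theta = \cK_\Theta \ominus \{\Theta f\oplus \Delta_\Theta f : f\in H^2(\cD)\},
$$
and then define $({\bf T}_1,{\bf T}_2)$ via the compression formula \eqref{AdmisFuncModel}. Admissibility conditions (1) and (3) make each ${\bf T}_i$ a well-defined contraction whose adjoint is just the restriction of the adjoint of the corresponding ambient operator; admissibility condition (4) then turns into the commutation ${\bf T}_2^*{\bf T}_1^*={\bf T}_1^*{\bf T}_2^*={\bf T}^*$, where ${\bf T}:=P_{\cH_\Theta}(M_z\oplus M_\zeta|_{\overline{\Delta_\Theta L^2(\cD)}})|_{\cH_\Theta}$. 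Because $\Theta$ is purely contractive, ${\bf T}$ is exactly the Sz.-Nagy--Foias model contraction with characteristic function $\Theta$, hence is c.n.u., and its defect spaces are canonically identified with $\cD,\cD_*$ and $\Theta_{{\bf T}}$ coincides with $\Theta$ via these identifications (this is \cite[Theorem VI.3.1]{Nagy-Foias}).

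The heart of the argument is the coincidence claim. Consider the operator tuple
$$
\underline{V}:=(M_{G_1^*+zG_2}\oplus W_1,\; M_{G_2^*+zG_1}\oplus W_2,\; M_z\oplus M_\zeta|_{\overline{\Delta_\Theta L^2(\cD)}})
$$
on $\cK_\Theta$, together with $\Pi:=\iota_{\cH\to\cK_\Theta}$ for $\cH=\cH_\Theta$. I would check that the triple $(\Pi,\cK_\Theta,\underline{V})$ lies in $\cU_{\underline{{\bf T}}}$: the dilation property $\underline{V}^*\Pi=\Pi\underline{{\bf T}}^*$ is immediate from coinvariance of $\cH_\Theta$; the relation $S_1=S_2^*V$ reduces to the Toeplitz computation $M_{G_2^*+zG_1}^*M_z = M_{G_1^*+zG_2}$ on $H^2(\cD_*)$ and to $W_2^*(W_1W_2)=W_1$ on the $L^2$-side using admissibility (2); commutativity of $(S_i,V)$ follows similarly from the commutativity of $W_1$ and $W_2$; and $(\Pi,V)$ is the Sz.-Nagy--Foias minimal isometric dilation of the c.n.u.\ contraction ${\bf T}$ by construction. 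On the other hand, Example \ref{TheEg} produces another member of $\cU_{\underline{{\bf T}}}$ from the characteristic triple of $({\bf T}_1,{\bf T}_2)$, and its last entry is also $M_z\oplus M_\zeta|_{\overline{\Delta_\Theta L^2(\cD)}}$. Corollary \ref{Uniqueness-Cor} then forces the two triples to be literally equal, so comparing coefficients in the symbol $G_i^*+zG_j$ of the Toeplitz part and reading off the unitary part on $\overline{\Delta_\Theta L^2(\cD)}$ yields $G_{\sharp i}=G_i$ and $W_{\sharp i}=W_i$ (under the canonical identifications $\cD_{{\bf T}}\leftrightarrow \cD$, $\cD_{{\bf T}^*}\leftrightarrow \cD_*$). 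Combined with $\Theta_{{\bf T}}=\Theta$, this gives coincidence of the two characteristic triples in the sense of Definition \ref{coincidence}.

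The main obstacle I anticipate is bookkeeping in this last step: one must be careful that the identifications $\cD_{{\bf T}}\leftrightarrow \cD$ and $\cD_{{\bf T}^*}\leftrightarrow \cD_*$ coming from the Sz.-Nagy--Foias model for the single operator ${\bf T}$ are the ones that intertwine $(G_{\sharp 1},G_{\sharp 2})$ with $(G_1,G_2)$ and induce the unitary $\omega_u$ intertwining $(W_{\sharp 1},W_{\sharp 2})$ with $(W_1,W_2)$. To handle the $G$-components cleanly I would invoke Proposition \ref{P:fundamental}: both $G_{\sharp i}$ and $G_i$ solve the same defect equation ${\bf T}_i^*-{\bf T}_j{\bf T}^* = D_{{\bf T}^*}G\,D_{{\bf T}^*}$ after transporting through these identifications, and solutions on $\cD_{{\bf T}^*}$ are unique. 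Everything else is then routine verification.
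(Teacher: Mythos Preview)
Your proposal is correct and follows essentially the same strategy as the paper: set up the functional model $({\bf T}_1,{\bf T}_2)$, use \cite[Theorem VI.3.1]{Nagy-Foias} to see ${\bf T}$ is c.n.u.\ with $\Theta_{\bf T}$ coinciding with $\Theta$, place $(\iota,\cK_\Theta,\underline{V})$ in $\cU_{\underline{\bf T}}$, and compare with the Example~\ref{TheEg} triple.

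The one place you and the paper diverge is in how the comparison is executed. You invoke Corollary~\ref{Uniqueness-Cor} directly, but that corollary requires the two members of $\cU_{\underline{\bf T}}$ to have literally the same last entry on the same space, whereas the Example~\ref{TheEg} triple lives on $\cK_{NF}=H^2(\cD_{{\bf T}^*})\oplus\overline{\Delta_{\bf T}L^2(\cD_{\bf T})}$, not on $\cK_\Theta$. The paper handles this by first writing down the explicit unitary $u_*\oplus\omega_u:\cK_\Theta\to\cK_{NF}$ coming from the coincidence $\Theta\sim\Theta_{\bf T}$, then applying Theorem~\ref{uniqueness} (not the corollary) to obtain \emph{some} intertwining unitary, and finally invoking the uniqueness clause of Theorem~\ref{T:min-iso-lift} for the single minimal isometric dilation to force that unitary to equal $u_*\oplus\omega_u$. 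This yields $(u_*\oplus\omega_u)\underline{V}=\underline{V}'(u_*\oplus\omega_u)$ directly, from which coincidence is read off. You anticipated exactly this issue in your last paragraph as ``bookkeeping,'' so there is no real gap; the paper's route just makes the transport step explicit rather than appealing to Corollary~\ref{Uniqueness-Cor} after an implicit identification. Your alternative suggestion to pin down the $G$-components via Proposition~\ref{P:fundamental} is valid but unnecessary once the intertwining unitary is identified, and the paper does not use it.
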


\begin{proof}
We have already observed that the characteristic triple of a pair $(T_1,T_2)$ of commuting contractions with $T=T_1T_2$ being a c.n.u.\ contraction is indeed a pure admissible triple (since characteristic functions
$\Theta_T$ are necessarily purely contractive).

Conversely suppose that $((G_1,G_2),(W_1,W_2),\Theta)$ is a pure admissible triple. This means
that the pair $({\bf T}_1,{\bf T}_2)$ defined on
$$
\mathcal{H}_\Theta:=\big(H^2(\mathcal{D_*})\oplus \overline{\Delta_\Theta L^2(\mathcal{D})}\big)\ominus \{\Theta f\oplus \Delta_\Theta f:f\in H^2(\mathcal{D})\}
$$
by
$$
({\bf T}_1,{\bf T}_2):=P_{\mathcal{H}_\Theta}(M_{G_1^*+zG_2}\oplus W_1,M_{G_2^*+zG_1}
\oplus W_2)|_{\mathcal{H}_\Theta}
$$
is a commuting pair of contractions and such that their product is given by
\begin{align}\label{producT}
{\bf T}:={\bf T}_1{\bf T}_2=
P_{\mathcal{H}_{\Theta}}(M_z\oplus M_{\zeta}|_{\overline{\Delta_\Theta(L^2(\cD))}})|_{\mathcal{H}_\Theta}.
\end{align}
By the Sz.-Nagy--Foias model theory for a single contraction operator $T$ (see \cite[Theorem VI.3.1]{Nagy-Foias}),
we conclude that ${\bf T}$ is a c.n.u.\ contraction.
We claim that the triple $((G_1,G_2),(W_1,W_2),\Theta)$
coincides with the characteristic triple for $({\bf T}_1,{\bf T}_2)$, which we assume to be
$((G'_1,G'_2),(W'_1,W'_2),\Theta_{{\bf T}})$. Since $\Theta$ is a purely contractive analytic function,
by (\ref{producT}) and Theorem VI.3.1 in  \cite{Nagy-Foias}, we conclude that $\Theta$ coincides
with $\Theta_{{\bf T}}$.  By definition this  means
that there exist unitaries $u:\mathcal{D}\to\mathcal{D}_{\bf T}$ and $u_*:\mathcal{D}_*\to\mathcal{D}_{{\bf T}^*}$
 such that $\Theta_{{\bf T}}u=u_*\Theta$. Clearly the unitary operator $u_*\oplus\omega_u$ takes
 $H^2(\mathcal{D_*})\oplus\overline{\Delta_\Theta L^2(\mathcal{D})}$ onto
 $H^2(\mathcal{D}_{{\bf T}^*})\oplus\overline{\Delta_{{\bf T}} L^2(\mathcal{D}_{{\bf T}})}$.
 Denote by $\tau$ the restriction of $u_*\oplus\omega_u$ to $\mathcal{H}_\Theta$.
 The following commutative diagram, where $i$ and $i'$ are the inclusion maps and
 $K_\Theta:= H^2(\mathcal{D_*})\oplus \overline{\Delta_\Theta L^2(\mathcal{D})}$,
$$
\begin{CD}
\mathcal{H}_\Theta @> i>> \mathcal K_\Theta\\
@V\tau VV @VVu_*\oplus\omega_u V\\
\mathcal{H}_{NF}@>>i'> \mathcal{K}_{NF}
\end{CD}
$$
shows that if we denote by $\underline{V}$ and $\underline{V'}$ the respective triples
\begin{align*}
&\underline{V}=(M_{G_1^*+zG_2}\oplus W_1,M_{G_2^*+zG_1}\oplus W_2,M_z\oplus M_{\zeta}|_{\overline{\Delta_\Theta(L^2(\cD))}}),  \\
&\underline{V'}=(M_{G_1'^*+zG_2'}\oplus W_1',M_{G_2'^*+zG_1'}\oplus W_2',M_z\oplus M_{\zeta}|_{\overline{\Delta_{{\bf T}}(L^2(\cD_{\bf T}))}}),
\end{align*}
then $(i,\mathcal{K}_\Theta, \underline{V})$ and $(i'\circ\tau,\mathcal{K}_{NF}, \underline{V'})$ both are in $\mathcal{U}_{\underline{T}}$.
Applying the uniqueness result Theorem \ref{uniqueness}, we get a unitary from $\mathcal{K}_\Theta$ onto $\mathcal{K}_{NF}$ that intertwines $\underline{V}$ and $\underline{V'}$ and the restriction of which to $\mathcal{H}_\Theta$ is $\tau$. Since the last entries of $\underline{V}$ and $\underline{V'}$ are the minimal isometric dilations of $T=T_1T_2$, such a unitary is in fact unique as a consequence of Theorem \ref{T:min-iso-lift}. Since $u_*\oplus\omega_u$ is one such unitary, we get
$$
(u_*\oplus\omega_u)\underline{V}=\underline{V'}(u_*\oplus\omega_u).
$$
Consequently $((G_1,G_2),(W_1,W_2),\Theta)$ coincides with $((G'_1,G'_2),(W'_1,W'_2),\Theta_{{\bf T}})$
and the theorem follows.
\end{proof}

The results of this and the previous subsection can be stated more succinctly in the language of Category Theory
as follows.

\begin{proposition}   \label{P:catlan}
Define the following categories:
\begin{enumerate}
\item[(i)]
Let ${\mathfrak C}_1$ be the category of all commuting pairs of contraction operators ${\mathbf T} = (T_1, T_2)$
where we set $T = T_1 \cdot T_2 = T_2 \cdot T_1$ and we assume that $T$ is c.n.u.
\item[(ii)] Let ${\mathfrak C}_2$ be the category of all purely contractive admissible triples
$({\mathbb G}, {\mathbb W}, \Theta)$.
\end{enumerate}
Define functors ${\mathfrak f} \colon {\mathfrak C}_1 \to {\mathfrak C}_2$ and
${\mathfrak g} \colon {\mathfrak C}_2 \to {\mathfrak C}_1$  by
\begin{align*}
& {\mathfrak f} \colon {\mathbf T} \mapsto ({\mathbb G}, {\mathbb W}, \Theta)_{\mathbf T} =
\text{ characteristic triple for } {\mathbf T}, \\
& {\mathfrak g}  \colon ({\mathbb G}, {\mathbb W}, \Theta) \mapsto {\mathbf T}_{{\mathbb G}, {\mathbb W}, \Theta}
= \text{functional-model commutative contractive pair} \\
& \quad \quad \quad \quad \text{ associated with } ({\mathbb G}, {\mathbb W}, \Theta)
\text{ as in } \eqref{AdmisFuncModel}.
\end{align*}
Then, for ${\mathbf T}, {\mathbf T}' \in {\mathfrak C}_1$ and $({\mathbb G}, {\mathbb W}, \Theta),
({\mathbb G}', {\mathbb W}', \Theta')  \in {\mathfrak C}_2$, we have
 \begin{enumerate}
\item  ${\mathbf T} \underset{u}\cong {\mathbf T'} \Leftrightarrow
{\mathfrak f}({\mathbf T}) \underset{c}\cong {\mathfrak f}({\mathbf T}')$,

\smallskip

\item  $({\mathbb G}, {\mathbb W}, \Theta) \underset{c} \cong ({\mathbb G}', {\mathbb W}', \Theta') \Leftrightarrow
{\mathfrak g}({\mathbb G}, {\mathbb W}, \Theta) \underset{u} \cong
{\mathfrak g}({\mathbb G}', {\mathbb W}', \Theta')$,

\smallskip

\item ${\mathfrak g} \circ {\mathfrak f}({\mathbf T}) \underset{u}\cong {\mathbf T}$,

\smallskip

\item ${\mathfrak f} \circ {\mathfrak g}(({\mathbb G}, {\mathbb W}, \Theta)) \underset{c}\cong
({\mathbb G}, {\mathbb W}, \Theta)$
\end{enumerate}
where $\underset{u} \cong$ denotes {\em unitary equivalence of operator tuples} and
$\underset{c} \cong$ denotes {\em coincidence of admissible triples}.
\end{proposition}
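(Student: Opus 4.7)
The plan is that all four assertions are already essentially proved earlier in the paper; the proposition just repackages them in a categorical language, so the work reduces to identifying each item with a previous theorem and then deriving the remaining item by a two-out-of-three argument. First I would dispose of (1): this is precisely the content of Theorem \ref{UnitaryInv}, with the hypothesis that $T=T_1T_2$ is c.n.u.\ built into the definition of $\mathfrak{C}_1$. Next, (3) is an immediate consequence of Theorem \ref{Thm:SNFmodelPair}: the characteristic triple $\mathfrak{f}(\mathbf{T})=((G_{\sharp 1},G_{\sharp 2}),(W_{\sharp 1},W_{\sharp 2}),\Theta_T)$ is purely contractive (since Sz.-Nagy--Foias characteristic functions are), and Theorem \ref{Thm:SNFmodelPair} exhibits an explicit unitary equivalence between $(T_1,T_2)$ and the model pair \eqref{NFmodelPair}, which is by definition $\mathfrak{g}\circ\mathfrak{f}(\mathbf{T})$.

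For (4), I would invoke the converse half of Theorem \ref{AdmisCharc}: starting from a purely contractive admissible triple $({\mathbb G},{\mathbb W},\Theta)$, Theorem \ref{AdmisCharc} constructs its functional model $\mathfrak{g}(({\mathbb G},{\mathbb W},\Theta))=({\bf T}_1,{\bf T}_2)$, shows that ${\bf T}={\bf T}_1{\bf T}_2$ is c.n.u., and proves that the original triple coincides with the characteristic triple of $({\bf T}_1,{\bf T}_2)$, i.e., with $\mathfrak{f}\circ\mathfrak{g}(({\mathbb G},{\mathbb W},\Theta))$. I would simply quote this conclusion.

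The only item needing a genuine argument is (2), and I would deduce it from (1), (3), (4) by a standard functorial chase. For the forward direction, assume $({\mathbb G},{\mathbb W},\Theta)\underset{c}\cong ({\mathbb G}',{\mathbb W}',\Theta')$; then applying $\mathfrak{g}$ to both sides and chaining through (4) gives $\mathfrak{f}\circ\mathfrak{g}(({\mathbb G},{\mathbb W},\Theta))\underset{c}\cong \mathfrak{f}\circ\mathfrak{g}(({\mathbb G}',{\mathbb W}',\Theta'))$, and the reverse implication in (1) (applied to $\mathfrak{g}(({\mathbb G},{\mathbb W},\Theta))$ and $\mathfrak{g}(({\mathbb G}',{\mathbb W}',\Theta'))$) yields $\mathfrak{g}(({\mathbb G},{\mathbb W},\Theta))\underset{u}\cong \mathfrak{g}(({\mathbb G}',{\mathbb W}',\Theta'))$. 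For the converse, assume $\mathfrak{g}(({\mathbb G},{\mathbb W},\Theta))\underset{u}\cong \mathfrak{g}(({\mathbb G}',{\mathbb W}',\Theta'))$; the forward implication in (1) gives coincidence of their characteristic triples, which by (4) is the same as coincidence of $({\mathbb G},{\mathbb W},\Theta)$ and $({\mathbb G}',{\mathbb W}',\Theta')$.

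The one point where care is needed, and the only mildly delicate step, is checking that the unitaries implementing coincidence are transported correctly under $\mathfrak{g}$ in the forward direction of (2); this however is visible already from the explicit form \eqref{AdmisFuncModel}, since a pair $(u,u_*)$ intertwining two admissible triples induces the unitary $u_*\oplus \omega_u$ between the respective ambient spaces $\cK_\Theta$ and $\cK_{\Theta'}$, which by an inspection of \eqref{AdmisFuncModel} carries one functional model pair to the other. I do not anticipate any essentially new obstacle beyond what is already handled in Theorems \ref{UnitaryInv} and \ref{AdmisCharc}.
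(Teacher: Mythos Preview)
Your proposal is correct. The paper in fact states Proposition \ref{P:catlan} without a separate proof, treating it as a categorical summary of Theorems \ref{UnitaryInv}, \ref{Thm:SNFmodelPair}, and \ref{AdmisCharc}; your identification of items (1), (3), (4) with those theorems and your derivation of (2) from them via transitivity is exactly the intended reading.
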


\subsection{Concrete Examples}
In this subsection, we shall exhibit several examples of characteristic triples. In particular, in the following result we characterize when a triple of the form $(g_1,g_2,\vartheta)$ can be admissible, where $g_1,g_2$ are scalars and $\vartheta$ is a Blaschke function with simple non-zero zeros.

\begin{proposition}  \label{P:concrete}
Let $(g_1,g_2)\in\mathbb{C}$ and  $\vartheta$ be a scalar-valued inner function.   If
\begin{enumerate}
\item  If  $\vartheta(z)=z^N$  for some positive integer $N$,  or
\item  If $\vartheta(z)=z^N\prod_{j=1}^M\frac{z-a_j}{1-\overline{a_j}z}$
for some positive integers $N$ and $M$ and distinct nonzero $a_j \in {\mathbb D}$ for $j=1, \dots, M$,
\end{enumerate}
then $(g_1,g_2,\vartheta)$ is admissible if and only if
\begin{equation}  \label{criterion}
\text{either } (g_1, g_2) \text{ or } (g_2,g_1) \text{  is in  } \overline{\mathbb{D}}\times \{0\}.
\end{equation}
\end{proposition}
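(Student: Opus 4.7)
The plan is to exploit the scalar-valued inner nature of $\vartheta$ to reduce the four admissibility conditions in Definition \ref{D:admis-cond} to a concrete polynomial divisibility problem, and then to extract the algebraic constraints on $(g_1, g_2)$ from the zero structure of $\vartheta$.

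First, since $\vartheta$ is scalar-valued we have $\cD = \cD_* \cong \mathbb{C}$, so $g_1, g_2$ are scalars; since $\vartheta$ is inner, $\Delta_\vartheta(\zeta) = 0$ almost everywhere on $\mathbb{T}$, hence $\overline{\Delta_\vartheta L^2(\cD)} = \{0\}$ and the commuting-unitary pair $(W_1, W_2)$ trivializes. The model space collapses to $\cH_{NF} = H^2 \ominus \vartheta H^2$, and the relevant multipliers are the scalar polynomials $\varphi_1(z) = \overline{g_1} + g_2 z$ and $\varphi_2(z) = \overline{g_2} + g_1 z$ acting as $M_{\varphi_i}$ on $H^2$. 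Admissibility condition (1) becomes $\|\varphi_i\|_\infty \le 1$, which is equivalent to $|g_1| + |g_2| \le 1$; condition (2) is vacuous; condition (3) is automatic since $\vartheta H^2$ is $H^\infty$-invariant.

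The essential content is condition (4). Since $\varphi_1, \varphi_2 \in H^\infty$ we have $M_{\varphi_1}^* M_{\varphi_2}^* = M_{\varphi_1 \varphi_2}^*$ on $H^2$, so (4) becomes $M_p^*|_{\cH_{NF}} = 0$, where
\[
p(z) \;:=\; \varphi_1(z)\varphi_2(z) - z \;=\; \overline{g_1 g_2} + \bigl(|g_1|^2 + |g_2|^2 - 1\bigr) z + g_1 g_2\, z^2.
\]
This is equivalent to $p \in \vartheta H^2$, which, since $\vartheta$ is a finite Blaschke product and $\deg p \le 2$, reduces to the requirement that $p$ vanish at every zero of $\vartheta$ in $\mathbb{D}$ with the correct multiplicity. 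In particular, the origin is a zero of $\vartheta$ of multiplicity $N \ge 1$ in both cases (1) and (2), so the constant term of $p$ vanishes: $\overline{g_1 g_2} = 0$, hence $g_1 g_2 = 0$, so one of $g_1, g_2$ must be zero. Combined with $|g_1| + |g_2| \le 1$, this yields criterion \eqref{criterion}.

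For the converse direction, I would substitute $(g_1, g_2) = (g, 0)$ with $|g| \le 1$ (the case $(0, g)$ being symmetric under the swap) to obtain $p(z) = (|g|^2 - 1) z \in zH^2 \subseteq \vartheta H^2$, then verify the remaining admissibility conditions directly; by Theorem \ref{AdmisCharc} the triple is realized as the characteristic triple of a genuine commuting contractive pair. The main obstacle I anticipate lies in case (2), where the additional distinct nonzero zeros $a_j$ of $\vartheta$ impose further equations $p(a_j) = 0$. Because the constant and leading coefficients of $p$ are complex conjugates while the coefficient of $z$ is real, careful polynomial bookkeeping is required to check that, once $g_1 g_2 = 0$, the surviving linear polynomial $p(z) = (|g|^2 - 1) z$ is compatible with these extra evaluation conditions, and that the contractivity bound $|g| \le 1$ is not strengthened by them.
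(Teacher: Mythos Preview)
Your strategy --- reducing condition (4) to the divisibility requirement $p := \varphi_1\varphi_2 - z \in \vartheta H^2$ and then reading off constraints from the zeros of $\vartheta$ --- is exactly the paper's approach (phrased there as the interpolation constraints $\varphi_1(a_j)\varphi_2(a_j) = a_j$ in Case~2 and as an explicit $2\times 2$ matrix identity for $N=2$ in Case~1). The preliminary reductions (triviality of the $(W_1,W_2)$ component since $\vartheta$ is inner, automatic invariance of $\vartheta H^2$, and the identity $M_{\varphi_1}^* M_{\varphi_2}^* = M_{\varphi_1\varphi_2}^*$) are all correct.

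The gap is the inclusion $zH^2 \subseteq \vartheta H^2$ that you invoke in the converse. This holds only when $\vartheta$ divides $z$, i.e.\ only for $\vartheta(z) = z$. Already for $\vartheta(z) = z^N$ with $N \ge 2$ the origin is a zero of $\vartheta$ of order $N$, so $p \in \vartheta H^2$ forces $p'(0) = 0$ in addition to $p(0)=0$, giving $|g_1|^2 + |g_2|^2 = 1$; in case~(2) each nonzero $a_j$ imposes $(|g|^2 - 1)a_j = 0$, again forcing $|g| = 1$. Thus the bound \emph{is} strengthened from $|g| \le 1$ to $|g| = 1$, contrary to your stated expectation, and indeed the paper's own computations (for $N=2$ in Case~1 and throughout Case~2) conclude with $\mathbb{T}\times\{0\}$ rather than the $\overline{\mathbb{D}}\times\{0\}$ appearing in \eqref{criterion}. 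Your divisibility reformulation is the right one; you simply stopped after extracting $p(0)=0$ instead of using the full order of vanishing that $\vartheta$ demands.
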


\begin{proof}
Let us define $\varphi_1(z)=\overline{g_1}+zg_2$ and $\varphi_2(z)=\overline{g_2}+zg_1$.
For a function $\varphi$, let us denote by $\widetilde{\varphi}$ the function
$\widetilde{\varphi}(z)=\overline{\varphi(\bar{z})}$.
Then note that for non-zero $z$,
$$
z \widetilde{\varphi_1}(\frac{1}{z})=z(g_1+\overline{g_2}\frac{1}{z})
= z g_1 + \overline{g_2} =\varphi_2(z).
$$
By definition, admissibility of $(g_1,g_2,\vartheta)$ means the following three conditions hold true:
\begin{enumerate}
\item the multiplication operators $M_{\varphi_1}$ and $M_{\varphi_2}$ are contractions on $H^2(\mathbb{D})$;
\item $\mathcal{M}:=\vartheta H^2(\mathbb{D})$ is joint $(M_{\varphi_1},M_{\varphi_2})$-invariant; and
\item $M_{\varphi_1}^*M_{\varphi_2}^*|_{\mathcal{M}^\perp}=M_{\varphi_2}^*M_{\varphi_1}^*|_{\mathcal{M}^\perp}=M_z^*|_{\mathcal{M}^\perp}.$
\end{enumerate}
It is well known that condition (1) is equivalent to $\|\varphi_i\|_{\infty,\mathbb{D}}$ being at most $1$, for $i=1,2$.
We see that condition (2) is automatic since we are in the scalar case. It remains to analyze condition (3).
For this analysis we must handle the two cases $\vartheta(z) = z^N$ and $\vartheta(z) = z^N \prod_{j=1}^M
\frac{z-a_j}{1 - \overline{aJ} z}$ separately.

\smallskip

\noindent
\textbf{Case 1: $\theta(z) = z^N$.}  For simplicity we consider only the case $N=2$ as the case $N=1$ is trivial
and $N\ge 3$ is similar but with a heavier notational burden.  Then $\cM^\perp$ has a orthonormal basis consisting
of $\{1, z\}$.  With respect to this orthonormal basis, the relevant operators have matrix representations
\begin{align*}
& M_{\varphi_1}^*|_{\cM^\perp} = \sbm{ \overline{\varphi_1(0)} & \overline{ \varphi'_1(0)} \\ 0 & \overline{\varphi_1(0)} }
= \sbm{ g_1 & \overline{g_2} \\ 0 & g_1 }, \quad
M_{\varphi_2}^*|_{\cM^\perp} = \sbm{ \overline{\varphi_2(0)} & \overline{ \varphi'_2(0)} \\ 0 & \overline{\varphi_2(0)} }
= \sbm{ g_2 & \overline{g_1} \\ 0 & g_2 }  \\
& M_z^*|_{\cM^\perp} = \sbm{  0 & 1 \\ 0 & 1 }.
\end{align*}
Condition (3) then comes down to the requirement that
$$
\sbm{ g_1 & \overline{g_2} \\ 0 & g_1 } \sbm{ g_2 & \overline{g_1} \\ 0 & g_2 } = \sbm{0 & 1 \\ 0 & 0}
$$
which is to say
$$
\sbm{ g_1 g_2 & | g_1 |^2 + | g_2 |^2 \\ 0 & g_1 g_2 } = \sbm{0 & 1 \\ 0 & 0}
$$
which in turn reduces to the condition that either $(g_1, g_2)$ or $(g_2, g_1)$ is in ${\mathbb T} \times \{0\}$.
This completes the verification for Case 1.

\smallskip

\noindent
\textbf{Case 2:  $\vartheta(z) =  z^N \prod_{j=1}^M \frac{ z - a_j}{1 - \overline{a_j} z}$.}
In this case
$$
\cM^\perp=\operatorname{span}\{1,k_{a_j}:1\leq j \leq M\}.
$$
Let us impose the convention that $a_0 = 0$.  Then
condition (3) is equivalent to the set of interpolation constraints:
\begin{align} \label{Interpol}
&\varphi_1(a_j)\varphi_2(a_j)=a_j, \text{ for } 0 \leq j \leq N \notag \\
& \Leftrightarrow (\overline{g_1}+a_jg_2)(\overline{g_2}+a_jg_1)=a_j \text{ for } 0\leq j \leq N \notag \\
& \Leftrightarrow (|g_1|^2+|g_2|^2)a_j+\overline{g_1g_2}+g_1g_2a_j^2=a_j\text{ for  } 0\leq j \leq N.
\end{align}
Putting $j=0$ in (\ref{Interpol}), we get $g_1g_2=0$, which implies that either or both of $g_1$, $g_2$ are $0$. But $g_1=0=g_2$ violates the last equivalent statement in \eqref{Interpol} once $j > 0$. Therefore we assume that $g_1=0$ and $g_2$ is any non-zero number of modulus at most one.  Then the final equivalent statement in  (\ref{Interpol}) collapses to $|g_2|^2 a_j = a_j$.  As $a_j \ne 0$ for $j > 0$, we conclude that $|g_2|^2 = 1$,
i.e., $g_2 \in {\mathbb T}$.  This completes the verification of Case 2.
\end{proof}

\begin{remark}   \label{R:interpretation}  {\rm We note the following interpretation of the criterion \eqref{criterion}
for admissibility of a triple of the form $(g_1, g_2, \vartheta(z))$ with $g_1, g_2$ complex numbers and $\vartheta$
a scalar inner function:}  Suppose that $T$ is a c.n.u.\ contraction operator on a Hilbert space $\cH$
 with characteristic function $\vartheta$,
and the criterion for complex numbers $g_1$, $g_2$ to be such that $(g_1, g_2, \vartheta)$ is an admissible
triple is given by criterion \eqref{criterion}.  Suppose that $T_1$, $T_2$ is a commutative pair of contractions  on $\cH$
such that $T = T_1 \cdot T_2$.  Then there is a number $\omega$ on the unit circle so that
$T_1 = \omega I_{{\mathbb C}^N}$, $T_2 =\overline{\omega} T$, or the reverse.
\end{remark}

We next give a perhaps somewhat less trivial example of a characteristic triple for a commutative contractive
pair $(T_1, T_2)$ on ${\mathbb C}^2$.

\begin{example}
{\rm Let $a,b,x,y$ be any four complex numbers with moduli strictly less than one such that $ay=bx$.
One can check that the triple
$$
\left(\begin{bmatrix}
      0 & \frac{a(1-|y|^2)}{1-|ay|^2} \\
      \frac{b(1-|x|^2)}{1-|bx|^2} & 0 \\
     \end{bmatrix},\begin{bmatrix}
      0 & \frac{x(1-|b|^2)}{1-|xb|^2} \\
      \frac{y(1-|a|^2)}{1-|ay|^2} & 0 \\
     \end{bmatrix},\frac{z-ay}{1-\overline{ay}z}I_{\mathbb{C}^2}\right)
$$
is the characteristic triple for the commutative, contractive pair
\begin{align}\label{NRegExamp}
T_1=\begin{bmatrix}
      0 & a \\
      b & 0 \\
     \end{bmatrix},\;
T_2=\begin{bmatrix}
      0 & x \\
      y & 0 \\
     \end{bmatrix}.
\end{align}
}\end{example}

\section{Characterization of invariant subspaces for pairs of commuting contractions}  \label{S:invsub}
In this section we characterize invariant subspaces for pairs $(T_1,T_2)$ of commuting contractions
such that $T=T_1T_2$ is a c.n.u.\ contraction. Sz.-Nagy and Foias characterized how invariant subspaces for
c.n.u.\ contractions arise in the functional model. They showed that invariant subspaces of a c.n.u. contraction $T$
are in one-to-one correspondence with regular factorizations of the characteristic function of $T$.  A minor
complication in the theory is that the factors in a regular factorization of a purely contractive analytic
function need not again be purely contractive.  We now recall
their result  as we shall have use of it later in this section.

\begin{thm}[Sz.-Nagy--Foais]\label{NFcnu}
Let $(\cD,\cD_*,\Theta)$ be a purely contractive analytic function and $\bf T$ be the contraction on
\begin{align}\label{cnuMspace}
\bH=H^2(\cD_*)\oplus\overline{\Delta_{\Theta} L^2(\cD)}\ominus\{\Theta f\oplus\Delta_{\Theta} f: f\in H^2(\cD)\}
\end{align}defined by
\begin{align}\label{NFMop}
  {\bf T}=P_{\bH}\big(M_z\oplus M_{\zeta}\big)|_{\bH}.
\end{align}A subspace $\bH'$ of $\bH$ is invariant under ${\bf T}$ if and only if there exist  contractive analytic functions $(\mathcal{D},\mathcal{F},\Theta')$, $(\mathcal{F},\mathcal{D}_{*},\Theta'')$ such that
\begin{align}\label{cnuRegFact}
  \Theta=\Theta''\Theta'
\end{align}is a regular factorization, and with the unitary $Z$ as in (\ref{Z}) we have
\begin{align}\label{cnuH'}
\bH'=\{\Theta''f\oplus Z^{-1}(\Delta_{\Theta''}f\oplus g):\;f\in H^2(\mathcal{F}),g\in\overline{\Delta_{\Theta'}L^2(\mathcal{D})}\}\\\nonumber
\ominus\{\Theta h\oplus\Delta_{\Theta}h:h\in H^2(\mathcal{D})\}
\end{align}
and
\begin{align}\label{cnuH''}
\bH'' := \bH\ominus\bH'=H^2(\cD_*)\oplus &Z^{-1}(\overline{\Delta_{\Theta''}L^2(\mathcal{F})}\oplus\{0\})\\\nonumber
&\ominus\{\Theta''f\oplus Z^{-1}(\Delta_{\Theta''}f\oplus 0):f\in H^2(\mathcal{F})\}.
\end{align}
Moreover, the characteristic function of ${\mathbf T}|_{{\mathbb H}'}$ coincides with the purely contractive part
of $\Theta'$, and the characteristic function of $P_{{\mathbb H}''} {\mathbf T}|_{{\mathbb H}''}$
coincides with the purely contractive part of $\Theta''$.
\end{thm}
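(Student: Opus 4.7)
The plan is to mimic the classical Sz.-Nagy--Foias argument (Theorem VII.1.1 of \cite{Nagy-Foias}) transcribed into our functional-model notation, splitting into a \emph{sufficiency} direction (a regular factorization yields an invariant subspace of the claimed form) and a \emph{necessity} direction (any invariant subspace comes from such a factorization). The minimal isometric lift $V_{NF} = M_z \oplus M_\zeta$ on $\cK_{NF} = H^2(\cD_*) \oplus \overline{\Delta_\Theta L^2(\cD)}$ is the bridge between the subspace geometry on $\bH$ and the function theory of contractive multipliers on $\cK_{NF}$.

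For sufficiency, start with a regular factorization $\Theta = \Theta'' \Theta'$ with $\Theta' \colon \cD \to \cF$ and $\Theta'' \colon \cF \to \cD_*$ contractive analytic, so that by Remark \ref{R:regfact} the operator $Z$ of \eqref{Z} is pointwise a.e.\ unitary on $\bT$. I would define $\bH'$ by the explicit formula \eqref{cnuH'} and check three things: (i) $\bH' \subseteq \bH$ (orthogonality against every $\Theta h \oplus \Delta_\Theta h$, $h \in H^2(\cD)$, follows by substituting $\Theta = \Theta'' \Theta'$ and using unitarity of $Z$ to collapse the $\Delta_{\Theta''} f$ and $\Delta_{\Theta'}$--pieces); (ii) $\bH'$ is $\bf T$-invariant, which reduces to $V_{NF}$-invariance of the enveloping subspace together with compression, using that $Z^{-1}$ is a scalar multiplier and that $M_{\Theta''}$, $M_{\Delta_{\Theta''}}$ intertwine the shifts; (iii) the orthogonal complement equals \eqref{cnuH''} by a direct subtraction of subspaces. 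The coincidence statements for the characteristic functions of ${\bf T}|_{\bH'}$ and $P_{\bH''}{\bf T}|_{\bH''}$ then follow by recognizing the formulas \eqref{cnuH'}--\eqref{cnuH''} as (after relabeling) Sz.-Nagy--Foias functional models built from $\Theta'$ and $\Theta''$ respectively, and then invoking the uniqueness of the characteristic function up to its purely contractive part (Observation \ref{O:reduction}).

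For necessity, given a $\bf T$-invariant subspace $\bH'$, set $\bH'' = \bH \ominus \bH'$ and write
\[
{\bf T} = \begin{bmatrix} {\bf T}_1 & 0 \\ X & {\bf T}_2 \end{bmatrix} \text{ on } \bH' \oplus \bH'', \quad {\bf T}_1 = {\bf T}|_{\bH'},\ {\bf T}_2 = P_{\bH''}{\bf T}|_{\bH''},\ X = P_{\bH''}{\bf T}|_{\bH'}.
\]
Let $\Theta_1 = \Theta_{{\bf T}_1}$ and $\Theta_2 = \Theta_{{\bf T}_2}$ be the characteristic functions of these (necessarily c.n.u.) contractions, acting between the appropriate defect spaces $\cD_1, \cD_{1*}$ and $\cD_2, \cD_{2*}$. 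The key step is to exhibit $V_{NF}$ on $\cK_{NF}$ as simultaneously a (generally non-minimal) isometric lift of ${\bf T}_1$ via $\bH' \hookrightarrow \cK_{NF}$ and, after projecting onto $\cK_{NF} \ominus \bH'$, a (generally non-minimal) isometric lift of ${\bf T}_2$. Composing the two dilation diagrams yields a factorization of $\Theta$ through the intermediate defect space in a canonical way, and unitary-constant augmentation (as in Observation \ref{O:reduction}) produces $\Theta'$, $\Theta''$ with $\Theta = \Theta''\Theta'$ and codomain/domain space exactly $\cF$. The main obstacle is proving that this factorization is \emph{regular}: one must show the map $Z$ of \eqref{Z} is a.e.\ onto, equivalently that the wandering subspaces generated in the two sub-dilations exhaust $\cD_\Theta(\zeta)$ without any residual piece. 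The classical device is to identify the two candidate lifts of ${\bf T}_1$ and ${\bf T}_2$ inside $\cK_{NF}$ with their \emph{minimal} isometric lifts, which forces the gluing to be defect-exhausting; this step is where the explicit geometry of $\cK_{NF}$, in particular the density property \eqref{cRDeltaTheta}, must be invoked. Once regularity is in hand, back-substitution into the formulas recovers precisely \eqref{cnuH'} and \eqref{cnuH''}, and the characteristic-function identifications follow as in the sufficiency half.
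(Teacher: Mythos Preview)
The paper does not supply its own proof of Theorem \ref{NFcnu}: this theorem is stated as a recall of the classical Sz.-Nagy--Foias result on invariant subspaces for a single c.n.u.\ contraction (see the sentence immediately preceding the theorem, ``We now recall their result as we shall have use of it later in this section''), and the paper simply cites \cite[Chapter VII]{Nagy-Foias} for the proof.  So there is no in-paper argument to compare your proposal against.

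That said, your outline is precisely the classical strategy from \cite[Theorem VII.1.1]{Nagy-Foias} transcribed into the functional-model coordinates used here, and in that sense it is the ``right'' proof to supply if one were to fill in the details.  A couple of small points: in your sufficiency direction, step (ii), the phrase ``$Z^{-1}$ is a scalar multiplier'' is imprecise --- $Z$ is a pointwise-unitary multiplication operator on $L^2$, not scalar-valued, but what you actually need (and presumably mean) is that it commutes with $M_\zeta$, which is immediate from its definition as a pointwise object.  In the necessity direction, your description of how regularity is forced is a bit loose: the classical argument does not quite identify the two sub-lifts with minimal lifts directly, but rather uses the wandering-subspace structure of $V_{NF}$ together with the fact that $\cK_{NF} \ominus \bH''$ is $V_{NF}$-invariant to exhibit the intermediate space $\cF$ concretely as a wandering subspace, from which the factorization and its regularity drop out simultaneously.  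But the overall shape of your argument is correct.
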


Let $T$ be a c.n.u.\ contraction such that $T=T_1T_2$ for a pair $(T_1,T_2)$ of commuting contractions.
It is natural that one would need more conditions than (\ref{cnuH'}) and (\ref{cnuH''}) for an invariant
subspace of $T$ to be jointly invariant under $(T_1,T_2)$.

\begin{thm}\label{T:inv-cnu}
Let $(\cD,\cD_*,\Theta)$ be a pure contractive analytic function and let  the triple $((G_1,G_2),(W_1,W_2),\Theta)$
be admissible. Define the pair $({\bf T_1},{\bf T_2})$ of commuting contractions on
\begin{align}\label{cnuMspace}
\bH=H^2(\cD_*)\oplus\overline{\Delta_{\Theta} L^2(\cD)}\ominus\{\Theta f\oplus\Delta_{\Theta} f: f\in H^2(\cD)\}
\end{align}by
\begin{align}\label{Mop}
 ({\bf T_1},{\bf T_2})=P_{\bH}\big(M_{G_1^*+zG_2}\oplus W_1,M_{G_2^*+zG_1}\oplus W_2\big)|_{\bH}.
\end{align}
A subspace $\bH'$ of $\bH$ is jointly invariant under $({\bf T_1},{\bf T_2})$ if and only if there exist two contractions $G'_1$, $G'_2$ in $\mathcal{B}(\mathcal{F})$, a pair $(W'_1,W'_2)$ of unitary operators
on $\overline{\Delta_{\Theta'}L^2(\cD)}$ with the property
\begin{align}\label{propW'}
W'_1W'_2=W'_2W'_1=M_{\zeta}|_{\overline{\Delta_{\Theta'}L^2(\cD)}}
\end{align}
along with two contractive analytic functions $(\mathcal{D},\mathcal{F},\Theta')$,
$(\mathcal{F},\mathcal{D}_{*},\Theta'')$ such that
\begin{eqnarray*}
\Theta=\Theta''  \Theta'
\end{eqnarray*}
is a regular factorization and also, with  $Z$ the pointwise unitary operator  as in (\ref{Z}),
\begin{align}\label{H'}
\bH'=\{\Theta''f\oplus Z^{-1}(\Delta_{\Theta''}f\oplus g):\;f\in H^2(\mathcal{F}),g\in\overline{\Delta_{\Theta'}L^2(\cD)}\}\\\nonumber
\ominus\{\Theta h\oplus\Delta_{\Theta}h:h\in H^2(\cD)\},
\end{align}
\begin{align}\label{H''}
\bH'':=\bH\ominus\bH'=&\;H^2(\cD_*)\oplus Z^{-1}(\overline{\Delta_{\Theta''}L^2(\mathcal{F})}\oplus\{0\})\\&\nonumber\ominus\{\Theta''f\oplus Z^{-1}(\Delta_{\Theta''}f\oplus 0):f\in H^2(\mathcal{F})\},
\end{align}
and for every $f\in H^2(\mathcal{F})$ and $g\in\overline{\Delta_{\Theta'}L^2(\cD)}$
\begin{align}\label{ExtraCond}
\begin{bmatrix}
      M_{G_i^*+zG_j} & 0 \\
      0 & W_i \\
     \end{bmatrix}
\begin{bmatrix}
\Theta''f\\Z^{-1}(\Delta_{\Theta''}f\oplus g)
\end{bmatrix}
=\begin{bmatrix}
\Theta''M_{G'_i+zG'_j}f\\Z^{-1}(\Delta_{\Theta''}M_{G'_i+zG'_j}f\oplus W'_i g)
\end{bmatrix},
\end{align}where $(i,j)=(1,2),(2,1)$.
\end{thm}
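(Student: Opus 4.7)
The proof splits naturally into a short sufficiency and a more delicate necessity, both built on Sz.-Nagy--Foias's classical Theorem \ref{NFcnu} together with the characteristic-triple machinery developed in Section \ref{S:NFmodel}.

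For \emph{sufficiency}, I would introduce the ambient subspace
$$
\cN := \{\Theta'' f \oplus Z^{-1}(\Delta_{\Theta''} f \oplus g) : f \in H^2(\cF),\ g \in \overline{\Delta_{\Theta'} L^2(\cD)}\}
$$
and verify, using the regular factorization $\Theta = \Theta'' \Theta'$ together with the defining property of $Z$, that $\cN \supseteq \cQ_{NF} := \{\Theta h \oplus \Delta_\Theta h : h \in H^2(\cD)\}$ (by taking $f = \Theta' h$ and $g = \Delta_{\Theta'} h$), so that $\bH' = \cN \ominus \cQ_{NF}$. Condition (\ref{ExtraCond}) asserts precisely that $A_i := M_{G_i^* + z G_j} \oplus W_i$ leaves $\cN$ invariant, while admissibility condition (3) of Definition \ref{D:admis-cond} gives invariance of $\cQ_{NF}$ under $A_i$. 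Together these force joint invariance of $\bH'$ under the compressions ${\bf T}_i = P_\bH A_i|_\bH$ by a standard model-space-compression argument.

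For the \emph{necessity}, joint invariance of $\bH'$ under $({\bf T}_1, {\bf T}_2)$ implies ${\bf T}$-invariance, so Theorem \ref{NFcnu} immediately delivers the regular factorization $\Theta = \Theta'' \Theta'$ and the forms (\ref{H'}), (\ref{H''}). To construct $(G'_1, G'_2)$ and $(W'_1, W'_2)$, I would work with the restricted commutative contractive pair $({\bf T}'_1, {\bf T}'_2) := ({\bf T}_1|_{\bH'}, {\bf T}_2|_{\bH'})$. Since ${\bf T}$ is c.n.u.\ on $\bH$, the restriction ${\bf T}|_{\bH'}$ is also c.n.u.\ (any unitary direct summand of ${\bf T}|_{\bH'}$ automatically reduces $\bf T$), and by the last sentence of Theorem \ref{NFcnu} its characteristic function coincides with the purely contractive part $\Theta'_0$ of $\Theta'$. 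Applying Theorem \ref{Thm:SNFmodelPair} yields the characteristic triple $((\widetilde G_1, \widetilde G_2), (\widetilde W_1, \widetilde W_2), \Theta'_0)$ of $({\bf T}'_1, {\bf T}'_2)$; trivially extending across the unitary-constant summand of $\Theta'$ (as in Proposition \ref{P:pure-adm-triple}, noting $\overline{\Delta_{\Theta'} L^2(\cD)} = \overline{\Delta_{\Theta'_0} L^2(\cD^0)}$) produces operators $(G'_1, G'_2)$ on $\cF$ and $(W'_1, W'_2)$ on $\overline{\Delta_{\Theta'} L^2(\cD)}$, with (\ref{propW'}) automatic from admissibility of the extended triple.

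The \emph{main obstacle} is verifying the explicit intertwining (\ref{ExtraCond}), which asserts both that $\cN$ is invariant under $A_i$ and that, in the isometric parametrization $(f,g) \mapsto \Theta'' f \oplus Z^{-1}(\Delta_{\Theta''} f \oplus g)$ identifying $H^2(\cF) \oplus \overline{\Delta_{\Theta'} L^2(\cD)}$ with $\cN$, the restriction $A_i|_\cN$ is conjugate to $M_{G'_i + z G'_j} \oplus W'_i$. My approach is to recognize this parametrization as the natural embedding of the Sz.-Nagy--Foias isometric-dilation space of ${\bf T}|_{\bH'}$ into that of $\bf T$ provided by Theorem \ref{NFcnu}, and to exploit that under admissibility the triple $(A_1, A_2, V_{NF})$ belongs to the class $\cU_{\underline{\bf T}}$ (cf.\ Example \ref{TheEg}), while the corresponding restriction to $\cN$ belongs to the analogous class $\cU_{\underline{{\bf T}'}}$ for $({\bf T}'_1, {\bf T}'_2)$. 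The uniqueness result Theorem \ref{uniqueness}, combined with Proposition \ref{P:fundamental} (characterizing $G'_i$ uniquely by the fundamental equations applied to $({\bf T}'_1, {\bf T}'_2)$), then forces the restricted operators to coincide with the functional-model operators associated with the characteristic triple $((G'_1, G'_2), (W'_1, W'_2), \Theta')$, yielding (\ref{ExtraCond}). The most delicate technical step will be verifying that the embedding of isometric-dilation spaces is compatible with the characteristic-triple structure on both sides; regularity of $\Theta = \Theta''\Theta'$ is essential here because it guarantees that $Z$ is unitary (rather than merely isometric), which is what permits the identification of the defect summands of $\Theta'$ and $\Theta''$ to be exact.
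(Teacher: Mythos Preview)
Your sufficiency argument matches the paper's.  For necessity, however, you take a substantially different route from the paper, and there is a real gap.  You propose to produce $(G_1',G_2')$ and $(W_1',W_2')$ by first forming the \emph{characteristic triple} of the restricted pair $({\bf T}_1|_{\bH'},{\bf T}_2|_{\bH'})$ and then invoking Theorem~\ref{uniqueness} to match it against the restriction $(A_1,A_2,V_{NF})|_{\cN}$.  The difficulty is that membership in $\cU_{\underline{{\bf T}'}}$ demands that the $V$-component be the \emph{minimal} isometric dilation of ${\bf T}|_{\bH'}$, whereas $V_{NF}|_{\cN}$, transported via $\cI$ to $M_z\oplus M_\zeta$ on $H^2(\cF)\oplus\overline{\Delta_{\Theta'}L^2(\cD)}$, is minimal only when $\Theta'$ is purely contractive.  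In general the factor $\Theta'$ in a regular factorization has a nontrivial unitary-constant summand (this is precisely why the last assertion of Theorem~\ref{NFcnu} speaks of the \emph{purely contractive part} of $\Theta'$), so Theorem~\ref{uniqueness} does not apply on the nose.  Correspondingly, the characteristic triple lives on $\cF_0$ (the pure part of $\cF$) rather than on $\cF$, and your ``trivial extension'' across the unitary-constant summand does not determine operators on $\cF$ in a way that would force the specific intertwining \eqref{ExtraCond}; any extension would have to be dictated by how $A_i$ already acts on $\Theta''H^2(\cF\ominus\cF_0)\subset\cQ_{NF}$, which you have not analyzed.

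The paper avoids this entirely by a direct, elementary construction: since $\bH''=\bH\ominus\bH'$ is $A_i^*$-invariant, the range $\cN=\operatorname{Ran}\cI$ of the isometry \eqref{iso-cnu} is $A_i$-invariant, so one may \emph{define} $X_i\oplus W_i'$ on $H^2(\cF)\oplus\overline{\Delta_{\Theta'}L^2(\cD)}$ by the pull-back relation $A_i\,\cI=\cI\,(X_i\oplus W_i')$.  The intertwining $\cI(M_z\oplus M_\zeta)=(M_z\oplus M_\zeta)\cI$ forces $X_i$ to commute with $M_z$, hence $X_i=M_{\varphi_i}$; then the structural relation $A_1=A_2^*V_{NF}$ (built into admissibility via Example~\ref{TheEg}) transfers to $M_{\varphi_1}=M_{\varphi_2}^*M_z$, which algebraically pins $\varphi_i$ down as a linear pencil $G_i'^*+zG_j'$.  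The unitarity of $(W_1',W_2')$ and \eqref{propW'} then follow by showing $Z^{-1}(\{0\}\oplus\overline{\Delta_{\Theta'}L^2(\cD)})$ is jointly reducing for $(W_1,W_2)$.  This argument never passes through the characteristic triple of the restriction and hence is indifferent to whether $\Theta'$ is pure; in fact the subsequent theorem in the paper deduces the relation to the characteristic triple of $({\bf T}_1|_{\bH'},{\bf T}_2|_{\bH'})$ \emph{a posteriori} from the $(G_i',W_i')$ so constructed.
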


\begin{proof}
We first prove the easier part---the proof of sufficiency.   Suppose that
$$
((G_1,G_2),(W_1,W_2),\Theta)
$$
is a purely contractive admissible triple (i.e., $( \cD, \cD_*, \Theta)$ is a purely contractive analytic function)
such that $\Theta$ has a regular factorization $\Theta = \Theta'' \Theta'$ with $(\cD, \cF, \Theta')$ and
$(\cF, \cD_*, \Theta'')$ contractive analytic functions.  We suppose also that $G_1'$ and $G_2'$ are
contraction operators on $\cF$, $W_1'$, $W_2'$ are unitary operators on $\overline{\Delta_{\Theta'} L^2(\cD)}$
so that \eqref{propW'} and \eqref{ExtraCond} hold.  Then we have all the ingredients to define ${\mathbb H}'$
and ${\mathbb H}''$ as in \eqref{H'} and \eqref{H''}.  Note next that ${\mathbb H}'$ is indeed a subspace of
${\mathbb H}$.  We wish to show that the space $\bH'$ given in (\ref{H'}) is
jointly invariant under the pair $({\bf T_1},\bf{T_2})$ defined in (\ref{Mop}). Firstly, it is easy to see that $\bH'$
is a subspace of $\bH$. Since the operator
\begin{align}\label{iso-cnu}
{\mathcal I} \colon H^2(\mathcal{F})\oplus\overline{\Delta_{\Theta'}L^2(\cD)} &\to
H^2(\cD_*)\oplus\overline{\Delta_{\Theta}L^2(\cD)}\\\nonumber
f\oplus g &\mapsto \Theta''f\oplus Z^{-1}(\Delta_{\Theta''}f\oplus g)
\end{align}
is an isometry, the space
$$
\{\Theta''f\oplus Z^{-1}(\Delta_{\Theta''}f\oplus g):f\in H^2(\mathcal{F})
\text{ and } g \in \overline{\Delta_{\Theta'}L^2(\cD)}\}
$$
is closed and by (\ref{ExtraCond}) we see that it is jointly invariant under
$$
(M_{G_1^*+zG_2}\oplus W_1,M_{G_2^*+zG_1}\oplus W_2,M_z\oplus M_{\zeta}).$$
We also see that
$$
\operatorname{Ran}\, \mathcal{I}=\big(H^2(\cD_*)\oplus \overline{\Delta_{\Theta} L^2(\cD)} \big)
\ominus(\bH\ominus\mathbb{H}').
$$
Now the sufficiency follows from the definition of $({\bf T_1},\bf{T_2})$ and from the general fact that if $V$
is an operator on $\mathcal{K}$ containing $\mathcal{H}$, $V(\cK\ominus\cH)\subset\cK\ominus\cH$,
and $V^*|_{\mathcal{H}}=T^*$, then for a subspace $\mathcal{H}'$ of $\mathcal{H}$,
$$
V(\mathcal{K}\ominus(\mathcal{H}\ominus\mathcal{H}'))\subseteq (\mathcal{K}\ominus(\mathcal{H}\ominus\mathcal{H}')) \text{ if and only if } T(\mathcal{H}')\subseteq\mathcal{H}'.
$$

Now we show that the conditions are necessary. The first step of the proof is an application of
Theorem \ref{NFcnu}. Indeed, if $\bH'\subset\bH$ is jointly invariant under $({\bf T_1},\bf{T_2})$,
then it is also invariant under the product ${\bf T_1T_2}$ and by definition of admissibility
$$
{\bf T}={\bf T_1T_2}={\bf T_2T_1}=P_{\bH}\big(M_z\oplus M_{\zeta}\big)|_{\bH}.
$$
Hence by Theorem \ref{NFcnu}, there exist two contractive analytic functions
$$
(\mathcal{D},\mathcal{F},\Theta'), \quad (\mathcal{F},\mathcal{D}_{*},\Theta'')
$$
such that $\Theta=\Theta''\Theta'$ is a regular factorization and the
spaces $\bH'$ and $\bH''$ are realized as in (\ref{H'}) and (\ref{H''}), respectively. It only remains to
produce contraction operators $G_1'$, $G_2'$ on $\cF$ and unitary operators $W_1'$, $W_2'$ on
$\overline{\Delta_{\Theta'} L^2(\cD)}$ so that conditions \eqref{propW'} and \eqref{ExtraCond} hold.
Note that, once we have found
$G_1'$, $G_2'$, $W_1'$, $W_2'$, verification of \eqref{ExtraCond} breaks up into three linear pieces,
where $(i,j) = (1,2)$ or $(2,1)$:
\begin{align}
& M_{G_i^* + z G_j} \Theta'' f = \Theta'' M_{G_i' + z G'_j} f \text{ for all } f \in H^2(\cF), \label{verify1} \\
& W_i (Z^{-1} (\Delta_{\Theta''} f \oplus 0)= Z^{-1} (\Delta_{\Theta''} M_{G_i' + z G_j'} f \oplus 0)
\text{ for all } f \in H^2(\cF),  \label{verify2} \\
& W_i Z^{-1} (0 \oplus g) = Z^{-1}(0 \oplus  W_i' g) \text{ for all } g \in \overline{\Delta_{\Theta'} L^2(\cD)}.
\label{verify3}
\end{align}

As a first step, we define operators $X_i$ on $H^2(\mathcal{F})$ and $W_i'$ on
$\overline{\Delta_{\Theta'}L^2(\cD)}$, for $i=1,2$, such that for every $f\in H^2(\mathcal{F})$ and
$g\in\overline{\Delta_{\Theta'}L^2(\cD)}$,
\begin{align}\label{Intertwining2}
\nonumber\begin{bmatrix}
      M_{G_i^*+zG_j} & 0 \\
      0 & W_i \\
     \end{bmatrix}\mathcal{I}
\begin{bmatrix}
f\\g
\end{bmatrix}&=
\begin{bmatrix}
M_{G_i^*+zG_j} & 0 \\
      0 & W_i \\
\end{bmatrix}
\begin{bmatrix}
\Theta''f\\Z^{-1}(\Delta_{\Theta''}f\oplus g)
\end{bmatrix}\\
&=\begin{bmatrix}
\Theta''X_if\\Z^{-1}(\Delta_{\Theta''}X_if\oplus W_i'g)
\end{bmatrix}=
\mathcal{I}
\begin{bmatrix}
     X_i & 0 \\
      0 & W_i' \\
\end{bmatrix}
\begin{bmatrix}
f\\g
\end{bmatrix},
\end{align}
where $\cI$ is the isometry as defined in (\ref{iso-cnu}). The operators $X_1,X_2$ and $W_1',W_2'$ are
well-defined because the operator $\mathcal{I}$ is an isometry. Indeed, it follows that $X_1,X_2$ and
$W_1',W_2'$ are contractions. Since the unitary $Z$ commutes with $M_{\zeta}$, it is easy to see from the
definition of $\mathcal I$ that it has the following intertwining property
\begin{equation} \label{Intertwining1}
  \mathcal{I} (M_z \oplus M_{\zeta}|_{\overline{\Delta_{\Theta'}L^2(\cD)}}) =
  (M_z\oplus M_{\zeta}|_{\overline{\Delta_{\Theta}L^2(\cD)}})\mathcal{I}.
\end{equation}
%We now analyze the three equations given in (\ref{Intertwining2}). Firstly, one of the equations in (\ref{Intertwining2}) is the following
%$$
%\left(
%     \begin{array}{cc}
%      M_z & 0 \\
%      0 & M_{\zeta} \\
%     \end{array}
%   \right)\mathcal{I}=\mathcal{I}\left(
%     \begin{array}{cc}
%     X_3 & 0 \\
%      0 & W_3' \\
%     \end{array}
%   \right),
%$$which by (\ref{Intertwining1}) means that $X_3=M_z$ and $W_3'=M_{\zeta}|_{\overline{\Delta_{\Theta'}L^2(\cD)}}$.
% Note that for every $f\in H^2(\mathcal{F})$, we have
%\begin{eqnarray}
%% \nonumber % Remove numbering (before each equation)
%df &=& df
%\end{eqnarray}
From the intertwining properties (\ref{Intertwining1}) and (\ref{Intertwining2}) of $\mathcal{I}$, we get for $i=1,2$
$$
(X_i\oplus W_i')(M_z\oplus M_{\zeta}|_{\overline{\Delta_{\Theta'}L^2(\cD)}})=
(M_z\oplus M_{\zeta}|_{\overline{\Delta_{\Theta'}L^2(\cD)}})(X_i\oplus W_i'),
$$
which implies that $(X_1,X_2)=(M_{\varphi_1},M_{\varphi_2})$, for some $\varphi_1$ and $\varphi_2$ in
$L^\infty(\mathcal{B}(\mathcal{F}))$. We next show that $\varphi_1$ and $\varphi_2$ are actually linear pencils.
 Toward this end,  notice from (\ref{Intertwining2}) that
\begin{align}\label{subeqn1}
% \nonumber % Remove numbering (before each equation)
  \begin{bmatrix}
     M_{\varphi_1} & 0 \\
      0 & W_1' \\
\end{bmatrix}&= \mathcal{I}^*
\begin{bmatrix}
      M_{G_1^*+zG_2} & 0 \\
      0 & W_1 \\
\end{bmatrix}\mathcal{I} \\\label{subeqn}
\begin{bmatrix}
     M_{\varphi_2} & 0 \\
      0 & W_2' \\
\end{bmatrix}&= \mathcal{I}^*
\begin{bmatrix}
      M_{G_2^*+zG_1} & 0 \\
      0 & W_2 \\
\end{bmatrix}\mathcal{I}
\end{align}
Now multiplying (\ref{subeqn1}) on the left by $M_z^*\oplus M_{\zeta}^*|_{\overline{\Delta_{\Theta'}L^2(\cD)}}$,
then using the intertwining property (\ref{Intertwining1}) of $\cI$ and then remembering that $(W_1,W_2)$ is a commuting pair of unitaries such that $W_1W_2=M_{\zeta}^*|_{\overline{\Delta_{\Theta}L^2(\cD)}}$, we get
$$
\begin{bmatrix}
     M_{z}^* & 0 \\
      0 & M^*_{\zeta}|_{\overline{\Delta_{\Theta'}L^2(\cD)}} \\
\end{bmatrix}
\begin{bmatrix}
     M_{\varphi_1} & 0 \\
      0 & W_1' \\
\end{bmatrix}
=\mathcal{I}^*
\begin{bmatrix}
      M^*_{G_2^*+zG_1} & 0 \\
      0 & W_2^* \\
\end{bmatrix}\mathcal{I}=
\begin{bmatrix}
     M_{\varphi_2}^* & 0 \\
      0 & W'^*_2 \\
\end{bmatrix}.
$$
Consequently, $M_{\varphi_2}=M_{\varphi_1}^*M_z$. A similar argument as above yields
$M_{\varphi_1}=M_{\varphi_2}^*M_z$. Considering these two relations and the power series
expansions of $\varphi_1$ and $\varphi_2$, we get
\begin{equation}   \label{varphi}
\varphi_1(z)=G_1'^*+zG_2' \text{ and }\varphi_2(z)=G_2'^*+zG_1',
\end{equation}
for some $G_1',G_2'\in\mathcal{B}(\mathcal{F})$. The fact that $M_{\varphi_1}$ (and $M_{\varphi_2}$)
is a contraction implies that $G_1'$ and $G_2'$ are contractions too.  Recalling \eqref{Intertwining2}
and the substitution $(X_1, X_2) = (M_{\varphi_1}, M_{\varphi_2})$ where $\varphi_1$ and $\varphi_2$
are given by \eqref{varphi},  we see that we have established \eqref{verify1} with the choice of $G_1'$, $G_2'$
as in \eqref{varphi}.

Next note that the bottom component of  \eqref{Intertwining2} gives us
\begin{equation}          \label{ConseqInt2}
W_iZ^{-1}(\Delta_{\Theta''}f\oplus g)=Z^{-1}(\Delta_{\Theta''}X_if\oplus W_i'g).
\end{equation}
for all $f \in H^2(\cF)$, $g \in \overline{\Delta_{\Theta'} L^2(\cD)}$, and $i=1,2$.
In particular, setting $g=0$  and recalling that $X_i = M_{\varphi_i} = M_{G_i^{\prime *} + z G_j'}$, we get
\begin{equation}   \label{verify2'}
W_i  Z^{-1}  (\Delta_{\Theta''}   f \oplus 0)       = Z^{-1} (\Delta_{\Theta''} M_{G_i^{\prime *} + z G_j'} f \oplus 0),
\end{equation}
thereby verifying \eqref{verify2}. We next consider \eqref{ConseqInt2} with $f=0$ and $g$ equal to a general element
of $\overline{\Delta_{\Theta'} L^2(\cD)}$ to get
\begin{equation}   \label{verify3'}
W_i  Z^{-1}(0\oplus g)=Z^{-1}(0\oplus W'_i g),
\end{equation}
thereby verifying \eqref{verify3} and hence also completing the proof of \eqref{ExtraCond}.

It remains to show that $(W_1', W_2')$ is a commuting pair of unitary operators satisfying condition
\eqref{propW'}.  Toward this goal, let us rewrite \eqref{verify3'} in the form
\begin{equation}  \label{verify3''}
 Z W_i Z^{-1} (0 \oplus g) = 0 \oplus W_i' g.
\end{equation}
which implies that, for $i=1,2$,
$$
ZW_iZ^{-1}(\{0\}\oplus\overline{\Delta_{\Theta'}L^2(\cD)})\subseteq(\{0\}\oplus\overline{\Delta_{\Theta'}L^2(\cD)}).
$$
On the other hand, using \eqref{verify2'} and noting that $M_{\zeta}|_{\overline{\Delta_{\Theta}L^2(\cD)}}$ commutes with $Z,W_1,W_2$ and $\Delta_{\Theta''}$, we get for every $f\in H^2(\mathcal{F})$ and $n\geq 0$
$$
ZW_iZ^{-1}(\Delta_{\Theta''}e^{-int}f\oplus 0)=(\Delta_{\Theta''}e^{-int}X_if\oplus 0),
$$
which implies that $ZW_iZ^{-1}(\overline{\Delta_{\Theta}L^2(\cD)}\oplus \{0\})\subseteq
(\overline{\Delta_{\Theta'}L^2(\cD)}\oplus\{0\})$, for $i=1,2$. We conclude that $Z^{-1} ( \{0\} \oplus
\overline{\Delta_{\Theta'} L^2(\cD)})$ is a reducing subspace for the pair of unitaries $(W_1, W_2)$
and hence $(W_1, W_2) |_{Z^{-1} (\{0\} \oplus \overline{\Delta_{\Theta'} L^2(\cD)}}$ is a pair of
commuting unitary operators.  The intertwining \eqref{verify3''} shows that the pair $(W_1', W_2')$ on
$\overline{\Delta_{\Delta'} L^2(\cD)}$  is jointly unitarily equivalent to the commutative unitary pair
$(W_1, W_2)|_{Z^{-1}(\{0\} \oplus \overline{\Delta_\Theta'} L^2(\cD)}$ and hence is itself a commutative
unitary pair.  Furthermore, since $W_1 W_2 = M_\zeta$ in particular on $Z^{-1} ( \{0\} \oplus \overline{\Delta_{\Theta'}
L^2(\cS)})$ and $M_\zeta$ commutes past $Z$ and $Z^{-1}$, we conclude that
condition \eqref{propW'} holds as well.
This completes the proof of the necessary part.
\end{proof}

As we see from the last part of the statement of Theorem \ref{T:inv-cnu}, Sz.-Nagy and Foias went on to
prove that, under the conditions of Theorem \ref{NFcnu}, the characteristic functions
of $\bf T|_{\bH'}$ and $P_{\bH\ominus\bH'} {\bf T}|_{\bH\ominus\bH'}$ coincide with the purely contractive parts of
$\Theta'$ and $\Theta''$, respectively. Below we find an analogous result (at least for the first part of this statement)
for pairs of commuting contractions.
The strategy of the proof is the same as that of Sz.-Nagy--Foias, namely: application of model theory.

\begin{thm}
Under the conditions of Theorem \ref{T:inv-cnu}, let $\bH'$ be a joint invariant subspace of $\bH$ induced by the
regular factorization $\Theta=\Theta''\Theta'$. Then with the notations as in Theorem \ref{T:inv-cnu},
the triple $((G_1',G_2'),(W_1',W_2'),\Theta')$ is admissible and its purely contractive part coincides
with the characteristic triple for $({\bf T_1},{\bf T_2})|_{\bH '}$.
\end{thm}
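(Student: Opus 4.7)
The core of my approach is to exploit the isometry $\mathcal{I}$ defined in \eqref{iso-cnu} together with three intertwining relations it satisfies: (a) $(M_{G_i^*+zG_j} \oplus W_i)\,\mathcal{I} = \mathcal{I}\,(M_{{G_i'}^*+zG_j'} \oplus W_i')$ for $(i,j) = (1,2),(2,1)$ (this is exactly \eqref{ExtraCond}); (b) $(M_z \oplus M_\zeta)\,\mathcal{I} = \mathcal{I}\,(M_z \oplus M_\zeta)$ (using that $Z$ commutes with $M_\zeta$ and that $\Delta_{\Theta''}$ is a multiplier, cf.\ \eqref{Intertwining1}); (c) $\mathcal{I}(\Theta' h \oplus \Delta_{\Theta'} h) = \Theta h \oplus \Delta_\Theta h$ for $h \in H^2(\mathcal{D})$, using that $\Theta = \Theta''\Theta'$ is regular, i.e., $Z$ is pointwise unitary. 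A consequence of (c) is that $\mathcal{I}$ restricts to a unitary $\widetilde{\mathcal{I}} \colon \bH_{\Theta'} \to \bH'$, where
$$\bH_{\Theta'} := \bigl(H^2(\mathcal{F}) \oplus \overline{\Delta_{\Theta'} L^2(\cD)}\bigr) \ominus \{\Theta' h \oplus \Delta_{\Theta'} h : h \in H^2(\cD)\}$$
is the Sz.-Nagy--Foias model space associated with $\Theta'$.

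To prove that $((G_1',G_2'),(W_1',W_2'),\Theta')$ is admissible, I would verify the four conditions of Definition \ref{D:admis-cond} one by one. Condition (1) (contractivity) follows by transporting contractivity of $M_{G_i^*+zG_j} \oplus W_i$ through the isometry $\mathcal{I}$ via (a). Condition (2) (commutativity and product of $W_1',W_2'$) is already established at the end of the proof of Theorem \ref{T:inv-cnu}. Condition (3) (invariance of $\{\Theta' h \oplus \Delta_{\Theta'} h\}$) is transported from invariance of $\{\Theta h \oplus \Delta_\Theta h\}$ using (a), (b), (c) and the injectivity of $\mathcal{I}$. For condition (4), take any $w \in \bH_{\Theta'}$, set $v = \mathcal{I} w \in \bH'$, apply the admissibility identity of the original triple at $v$ (valid since $\bH' \subset \bH$), and then apply $\mathcal{I}^*$ using the adjoint forms of (a) and (b) along with $\mathcal{I}^* \mathcal{I} = I$ to obtain the desired identity on $\bH_{\Theta'}$.

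For the second assertion I define the candidate functional-model pair
$$\mathbf{T}_i' := P_{\bH_{\Theta'}}\bigl(M_{{G_i'}^*+zG_j'} \oplus W_i'\bigr)\big|_{\bH_{\Theta'}},\quad (i,j)=(1,2),(2,1),$$
and establish the projection intertwining $\mathcal{I}\,P_{\bH_{\Theta'}} = P_{\bH'}\,\mathcal{I}$. This identity holds because $\mathcal{I}$ sends $\bH_{\Theta'}$ unitarily onto $\bH'$ and sends $\{\Theta' h \oplus \Delta_{\Theta'} h\}$ bijectively onto $\{\Theta h \oplus \Delta_\Theta h\}$, which lies in $\bH^\perp \subset (\bH')^\perp$. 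Combined with (a), this yields $\widetilde{\mathcal{I}}\,\mathbf{T}_i' = (\mathbf{T}_i|_{\bH'})\,\widetilde{\mathcal{I}}$, so that $(\mathbf{T}_1,\mathbf{T}_2)|_{\bH'}$ is unitarily equivalent to the functional-model pair associated with the admissible triple $((G_1',G_2'),(W_1',W_2'),\Theta')$.

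To conclude, I would apply Proposition \ref{P:pure-adm-triple} to extract the purely contractive part of $((G_1',G_2'),(W_1',W_2'),\Theta')$; this step is necessary precisely because $\Theta'$ need not itself be purely contractive, even though $\Theta$ is. By that proposition, the two triples have unitarily equivalent functional models, and by Theorem \ref{AdmisCharc} the pure part coincides with the characteristic triple of its own functional model. Combined with the previous paragraph and the fact (Theorem \ref{UnitaryInv}) that coincidence of characteristic triples is invariant under unitary equivalence of the underlying operator pair, the characteristic triple of $(\mathbf{T}_1,\mathbf{T}_2)|_{\bH'}$ coincides with the purely contractive part of $((G_1',G_2'),(W_1',W_2'),\Theta')$. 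The main technical obstacle is the bookkeeping for condition (4) of admissibility, which requires the adjoint forms of the intertwinings coupled with the inclusion $\bH' \subset \bH$; once this is in place the remainder of the argument is essentially formal transport along $\mathcal{I}$.
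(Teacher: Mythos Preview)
Your proposal is correct and follows essentially the same approach as the paper's own proof. The paper works with $U:=\mathcal{I}^*|_{\operatorname{Ran}\mathcal{I}}$ instead of $\mathcal{I}$ itself, establishes the unitary equivalences $U(M_{G_i^*+zG_j}\oplus W_i)U^*=M_{{G_i'}^*+zG_j'}\oplus W_i'$ and $U(M_z\oplus M_\zeta)U^*=M_z\oplus M_\zeta$, and then transports the admissibility conditions and the functional-model identification exactly as you outline; the concluding appeal to Theorem~\ref{UnitaryInv}, Theorem~\ref{AdmisCharc}, and Proposition~\ref{P:pure-adm-triple} is identical.
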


\begin{proof}
With the isometry $\cI$ as in (\ref{iso-cnu}), define a unitary $U:=\cI^*|_{\operatorname{Ran }\cI}$. Therefore
\begin{align}\label{iso-cnu^*}
\nonumber
U:\{\Theta''f\oplus Z^{-1}(\Delta_{\Theta''}f\oplus g):f\in H^2(\mathcal{F}),\;g\in\overline{\Delta_{\Theta'}L^2(\cD)}\}
&\to H^2(\mathcal{F})\oplus\overline{\Delta_{\Theta'}L^2(\cD)} \\
U \colon \Theta''f\oplus Z^{-1}(\Delta_{\Theta''}f\oplus g) \mapsto  f\oplus g.
\end{align}
For every $g\in H^2(\cD)$,
\begin{eqnarray}\label{ortho}
U(\Theta g\oplus\Delta_{\Theta}g)=U(\Theta''\Theta'g\oplus Z^{-1}(\Delta_{\Theta''}\Theta'g\oplus\Delta_{\Theta'}g))
=\Theta'g\oplus\Delta_{\Theta'}g,
\end{eqnarray}which implies that $U$ takes $\bH'$ as given in (\ref{H'}) onto the Hilbert space
\begin{align}\label{fracH'}
\mathfrak{H}':=H^2(\mathcal{F})\oplus\overline{\Delta_{\Theta'}L^2(\cD)}\ominus\{\Theta'g\oplus
\Delta_{\Theta'}g:g\in H^2(\cD)\}
\end{align}
The basis of the proof is the following unitary equivalences:
\begin{align}
 \label{goalcnu}  U(M_{G_i^*+zG_j}\oplus W_i) U^*&=M_{G_i'^*+zG_j'}\oplus W_i' \text{ for } (i,j)=(1,2),(2,1), \\
 \label{goalcnu1} U(M_z\oplus M_{\zeta})U^*&=(M_z\oplus M_{\zeta}).
\end{align}
To verify \eqref{goalcnu}--\eqref{goalcnu1}, proceed as follows.
Since $M_{\zeta}$ commutes with $Z$ and $\Delta_{\Theta''}$, (\ref{goalcnu1}) follows easily.
We establish equation (\ref{goalcnu}) only for $(i,j)=(1,2)$ and omit the proof for the other case because it is similar. For $f\in H^2(\mathcal{F})$ and $g\in\overline{\Delta_{\Theta'}L^2(\cD)}$,
\begin{align*}
&U(M_{G_1^*+zG_2}\oplus W_1)U^*(f\oplus g)=U(M_{G_1^*+zG_2}\oplus W_1)(\Theta''f\oplus Z^{-1}(\Delta_{\Theta''}f\oplus g))\\
=&\; U(\Theta''M_{G_1'^*+zG_2'}f\oplus Z^{-1}(\Delta_{\Theta''}M_{G_1'^*+zG_2'}f\oplus W_1'g))\;[\text{by }(\ref{ExtraCond})]\\
=&\; M_{G_1'^*+zG_2'}f\oplus W_1'g
\end{align*}
and \eqref{goalcnu} also follows.

We now show that the triple $((G_1',G_2'),(W_1',W_2'),\Theta')$ is admissible. Recall that in the course of the
proof of Theorem \ref{T:inv-cnu}, we saw that both $G_1'$ and $G_2'$ are contractions and that $(W_1',W_2')$
is a pair of commuting unitaries satisfying (\ref{propW'}). From (\ref{goalcnu}) we see that for every $f\in H^2(\cF)$
and $(i,j)=(1,2),(2,1)$,
\begin{align*}
(M_{G_i'^*+zG_j'}\oplus W_i') (\Theta'f\oplus\Delta_{\Theta'}f)
=&\;U(M_{G_i^*+zG_j}\oplus W_i) U^*(\Theta'f\oplus\Delta_{\Theta'}f)\\
=&\;U(M_{G_i^*+zG_j}\oplus W_i)\big(\Theta''\Theta'f\oplus Z^{-1}(\Delta_{\Theta''}\Theta'f\oplus \Delta_{\Theta'}f\big)\\
=&\;U(M_{G_i^*+zG_j}\oplus W_i)\big(\Theta f\oplus \Delta_{\Theta}f\big).
\end{align*}
From the admissibility of $((G_1,G_2),(W_1,W_2),\Theta)$, we know that each of the contraction operators
$(M_{G_i^*+zG_j}\oplus W_i)$ takes the space
$\{\Theta f\oplus \Delta_{\Theta}f:f\in H^2(\cD)\}$ into itself. Therefore from the last term of the above computation and (\ref{ortho}), we see that for each $(i,j)=(1,2),(2,1)$,
$$
(M_{G_i'^*+zG_j'}\oplus W_i')\big(\{\Theta' f\oplus \Delta_{\Theta'}f:f\in H^2(\cD)\}\big)\subset\{\Theta' f\oplus \Delta_{\Theta'}f:f\in H^2(\cD)\}.
$$
From (\ref{goalcnu}) it is also clear that for each $(i,j)=(1,2),(2,1)$, the operators $(M_{G_i'^*+zG_j'}\oplus W_i')$ are contractions and that with $\mathfrak{H}'$ as in (\ref{fracH'})
\begin{align*}
&(M_{G_i'^*+zG_j'}\oplus W_i')^*(M_{G_j'^*+zG_i'}\oplus W_j')^*|_{\mathfrak{H}'}=U(M_{G_i^*+zG_j}
\oplus W_i)^*(M_{G_j^*+zG_i}\oplus W_j)^*|_{\bH}\\
=&\;U(M_z\oplus M_{\zeta})|_{\bH}=U(M_z\oplus M_{\zeta})U^*|_{\mathfrak{H}'}=
(M_z\oplus M_{\zeta})|_{\mathfrak{H}'} \quad [\text{by } (\ref{goalcnu1})].
\end{align*}
This completes the proof of admissibility of $((G_1',G_2'),(W_1',W_2'),\Theta')$.

And finally to prove the last part we first observe that
\begin{align*}
({\bf T_1},{\bf T_2})=P_{\bH'}(M_{G_1^*+zG_2}\oplus W_1,M_{G_2^*+zG_1}\oplus W_2)|_{\bH'}.
\end{align*}
Now from equations (\ref{goalcnu}) and (\ref{goalcnu1}) again and from the fact that
$U(\bH')=\mathfrak{H}'$ (hence $UP_{\bH'}=P_{\mathfrak{H}'}U$), we conclude that
\begin{align*}
({\bf T_1},{\bf T_2},{\bf T_1T_2})|_{\bH'}=P_{\bH'}(M_{G_1^*+zG_2}\oplus W_1,M_{G_2^*+zG_1}\oplus W_2,M_z\oplus M_{\zeta})|_{\bH'}
\end{align*}
is unitarily equivalent to the functional model associated to $((G_1',G_2'),(W_1',W_2'),\Theta')$, i.e.,
\begin{align*}
P_{\mathfrak H'}(M_{G_1^*+zG_2}\oplus W_1,M_{G_2^*+zG_1}\oplus W_2,M_z\oplus M_{\zeta})|_{\mathfrak H'}
\end{align*}
via the unitary $U|_{\bH'}:\bH'\to\mathfrak{H}'$. Therefore appeal to Theorem \ref{UnitaryInv},
Theorem \ref{AdmisCharc} and Proposition \ref{P:pure-adm-triple} completes the proof.
\end{proof}

In case  the purely contractive analytic function $(\cD,\cD_{*},\Theta)$ is inner, the results above are much
simpler, as in the following statement.

\begin{thm}
Let $(\cD,\cD_*,\Theta)$ be an inner function and $((G_1,G_2),\Theta)$ be an admissible pair.
Define the pair $({\bf T_1},{\bf T_2})$ of commuting contractions on
\begin{align}\label{cnuMspace}
\bH=H^2(\cD_*)\ominus\{\Theta f: f\in H^2(\cD)\}
\end{align}by
\begin{align}\label{Mop}
 ({\bf T_1},{\bf T_2})=P_{\bH}\big(M_{G_1^*+zG_2},M_{G_2^*+zG_1}\big)|_{\bH}.
\end{align}
A subspace $\bH'$ of $\bH$ is jointly invariant under $({\bf T_1},{\bf T_2})$ if and only if there exist two inner functions
$(\mathcal{D},\mathcal{F},\Theta')$, $(\mathcal{F},\mathcal{D}_{*},\Theta'')$ such that
\begin{eqnarray*}
\Theta=\Theta''\Theta'
\end{eqnarray*}
is a regular factorization,
\begin{align}\label{H'}
\bH'=\{\Theta''f:f\in H^2(\mathcal{F})\}\ominus\{\Theta h:h\in H^2(\cD)\},
\end{align}
\begin{align}\label{H''}
\bH'':=\bH\ominus\bH'=&\;H^2(\cD_*)\ominus\{\Theta''f:f\in H^2(\mathcal{F})\},
\end{align}and two contractions $G'_1,G'_2$ in $\mathcal{B}(\mathcal{F})$ such that
\begin{align}\label{ExtraCond'}
M_{G_i^*+zG_j}M_{\Theta''}=M_{\Theta''}M_{G'_i+zG'_j}.
\end{align}
Moreover, the pair $((G_1',G_2'),\Theta')$ coincides with the characteristic pair for $(T_1,T_2)|_{\mathcal{H}'}$.
\end{thm}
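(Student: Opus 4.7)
The plan is to deduce this theorem as a straightforward specialization of Theorem \ref{T:inv-cnu} to the inner case. When $\Theta$ is inner, $\Delta_\Theta(\zeta) = 0$ almost everywhere on ${\mathbb T}$, so $\overline{\Delta_\Theta L^2(\cD)} = \{0\}$ and the $(W_1, W_2)$ component of any admissible triple lives on the zero space (so is absent). The Sz.-Nagy--Foias model space collapses to $\bH = H^2(\cD_*) \ominus \Theta H^2(\cD)$ and the model pair becomes \eqref{Mop}, so an "admissible pair" $((G_1, G_2), \Theta)$ is the correct inner analogue of an admissible triple.

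The first substantive step is to observe that in any regular factorization $\Theta = \Theta''\Theta'$ of an inner $\Theta$, both factors must themselves be inner. Indeed, since $\Theta(\zeta)$ is an isometry a.e., $\cD_{\Theta(\zeta)} = \{0\}$ a.e.; the requirement (from Remark \ref{R:regfact}) that the pointwise operator $Z(\zeta) \colon \cD_{\Theta(\zeta)} \to \cD_{\Theta''(\zeta)} \oplus \cD_{\Theta'(\zeta)}$ be unitary a.e.\ then forces the codomain to be $\{0\}$ a.e., so $\Theta'(\zeta)$ and $\Theta''(\zeta)$ are pointwise isometries. Conversely, if $\Theta'$ and $\Theta''$ are both inner then $Z$ trivially maps $\{0\}$ to $\{0\}$, so the factorization is automatically regular; hence in the inner case the word "regular" imposes no real constraint beyond $\Theta = \Theta''\Theta'$ with both factors inner.

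Next I would apply Theorem \ref{T:inv-cnu}. Since $\Theta'$ and $\Theta''$ are inner, $\Delta_{\Theta'} \equiv 0 \equiv \Delta_{\Theta''}$ and $\overline{\Delta_{\Theta'} L^2(\cD)} = \{0\}$; consequently the expressions $\Delta_{\Theta''} f$ and $g$ in \eqref{H'}--\eqref{H''} vanish, and those formulas collapse exactly to the simplified \eqref{H'}--\eqref{H''} in the present statement. The intertwining \eqref{ExtraCond} splits into two pieces: the top piece is $M_{G_i^* + z G_j} \Theta'' f = \Theta'' M_{G_i'^* + z G_j'} f$, which is \eqref{ExtraCond'}, and the bottom piece holds vacuously because both sides live in the zero space. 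The pair $(W_1', W_2')$ on $\overline{\Delta_{\Theta'} L^2(\cD)} = \{0\}$ is trivial, so condition \eqref{propW'} is vacuous; thus nothing beyond $G_1', G_2'$ and the intertwining \eqref{ExtraCond'} remains to be specified. Running Theorem \ref{T:inv-cnu} in both directions with these simplifications yields the biconditional. The "moreover" clause follows from the corresponding moreover in the general theorem (the characteristic triple of $({\mathbf T}_1, {\mathbf T}_2)|_{\bH'}$ coincides with the purely contractive part of $((G_1', G_2'), (W_1', W_2'), \Theta')$): here the $W$-component is absent and $\Theta'$ is inner (hence already purely contractive), so the purely contractive part is the pair $((G_1', G_2'), \Theta')$ itself.

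The only nontrivial point is the inheritance of innerness by the two factors, and that is a one-line consequence of the definition of regularity. Everything else is bookkeeping — systematically setting $\Delta$-operators and their associated $L^2$-spaces to zero in the statement of Theorem \ref{T:inv-cnu} — so I do not expect any genuine obstacle here beyond keeping the notation straight.
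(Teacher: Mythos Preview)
Your approach is exactly the one the paper takes: the final theorem is stated there without proof, introduced only by the remark that ``in case the purely contractive analytic function $(\cD,\cD_*,\Theta)$ is inner, the results above are much simpler,'' so the intended argument is precisely the specialization of Theorem~\ref{T:inv-cnu} and the subsequent theorem that you carry out. Your observation that regularity of $\Theta = \Theta''\Theta'$ with $\Theta$ inner forces both factors to be inner (and conversely that any inner factorization is automatically regular) is correct and is the only genuine content beyond bookkeeping.

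One small slip: in the last paragraph you write ``$\Theta'$ is inner (hence already purely contractive).'' This implication is false in general --- a constant unitary is inner but not purely contractive --- so the ``moreover'' clause should really read that $((G_1',G_2'),\Theta')$ has purely contractive part coinciding with the characteristic pair of $({\bf T}_1,{\bf T}_2)|_{\bH'}$, exactly as in the general theorem. The paper's own statement of the inner case omits this caveat as well, so your gap mirrors an imprecision in the paper rather than a flaw in your method.
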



\begin{thebibliography}{99}

%\bibitem{awy} A. A. Abouhajar, M. C. White and N. J. Young, {\em A
%Schwarz lemma for a domain related to $\mu$-synthesis}, J.
%Geom. Anal. 17 (2007), 717-750.
%%\bibitem{AMc-book} J. Agler and J. McCarthy, Pick interpolation and Hilbert function spaces. Graduate Studies in Mathematics, 44. American Mathematical Society, Providence, RI, 2002.

\bibitem{A-M-Dist-Var} J. Agler and J. McCarthy, {\em{Distinguished Varieties}}, Acta. Math. \textbf{194} (2005), 133-153.

%%\bibitem{awy-cor} A. A. Abouhajar, M. C. White and N. J. Young, {\em Corrections to 'A
%%Schwarz lemma for a domain related to $\mu$-synthesis'}, available
%%online at
%%http://www1.maths.leeds.ac.uk/~nicholas/abstracts/correction.pdf.

\bibitem{ando} T. And\^o, {\em{On a Pair of Commuting Contractions}}, Acta Sci. Math. (Szeged) \textbf{24} (1963), 88-90.

%\bibitem{Ando-triple}T. Ando, {\em{Unitary dilation for a triple of commuting contractions}}, Bull. Acad. Polon. Sci. Sér. Sci. Math. Astronom. Phys. 24 (1976), 851-853.
%\bibitem{Archer}J. R. Archer, {\em{Unitary dilations of commuting contractions}}, Ph.D. Thesis, University of Newcastle upon Tyne, 2004.
%%\bibitem{Arveson} W. B. Arveson, {\em Subalgebras of $C^*$-algebra}. II., Acta Math. 128 (1972), 271-308.
%%\bibitem{Arveson} W. B. Arveson, {\em Subalgebras of $C^*$-algebra}, Acta Math. 123 (1969), 141-224.
%\bibitem{BBM}B. Bagchi, T. Bhattacharyya, G. Misra, {\em{Some thoughts on Ando's theorem and Parrott's example}}, Linear Algebra Appl. 341 (2002), 357-367.

\bibitem{BB} J.A.\ Ball and V.\ Bolotnikov,  {\em de Branges Rovnyak spaces:  basics and theory}, in:
Operator Theory Vol.1  (ed. D.\ Alpay) pp.\ 631--679, Springer, Basel, 2015.


%\bibitem{BBF} J. A. Ball, V. Bolotnikov and Q. Fang, {\em{Multivariable backward-shift-invariant subspaces
%and observability operators}}, Multidim Syst Sign Process 18 (2007), 191-248.

\bibitem{BC}  J.A.\ Ball and N.\ Cohen, {\em de Branges-Rovnyak operator models and systems theory:  a survey},
in: Topics in Matrix and  Operator Theory, pp. 03--136. Oper.\ Th.\ Adv.\  Appl.\  \textbf{50},
Birkh\"auser, Basel, 1991.


\bibitem{BK} J.A.\ Ball and T.L.\ Kriete, {\em Operator-valed Nevanlinna-Pick kernels and the functional models for
contraction operators},   Integral Equations and Operator Theory \textbf{10} (1987) no.\ 1, 17--61.

\bibitem{BSprep} J.A.\ Ball and H.\ Sau,  {\em Functional models and invariant subspaces for commutative contractive Hilbert-space operator tuples},  in preparation.



%\bibitem{BLT}J. A. Ball, W. S. Li, D. Timotin and T. T. Trent, {\em{A commutant lifting theorem on the polydisc}}, Indiana Univ. Math. J. 48 (1999), no. 2, 653-675.
%\bibitem{B-D-F}H. Bercovici, R.G. Douglas and C. Foias, {\em{On the classification of multi-isometries}}, Acta Sci. Math.
%(Szeged) 72 (2006), 639-661.

\bibitem{BV}  J.A.\ Ball and V.\ Vinnikov, {\em Hardy spaces on a finite bordered Riemann surface, multivariable
operator model theory and Fourier analysis along a unimodular curve},
Systems, Approximation, Singular Integral Operators, and Related Topics (Bordeaux, 2000) 37--56,
Oper.\ Theory Adv.\ Appl.\ \textbf{129}, Birkh\"auser, Basel, 2001.


\bibitem{B-C-L} C. A. Berger, L. A. Coburn and A. Lebow, {\em{Representation and index theory for $\mathcal C^*$-algebras generated by
commuting isometries}}, J. Funct. Anal. \textbf{27} (1978), no. 1, 51-99.
\bibitem{sir's tetrablock paper} T. Bhattacharyya, {\em{The tetrablock as a spectral set}}, Indiana Univ. Math. J. \textbf{63} (2014), 1601-1629.

%%\bibitem{BDS-Polydisk} T. Bhattacharyya, B. K. Das and H. Sau, {\em{On certain Toeplitz operators and associated completely positive maps}}, arXiv:1712.02237 [math.FA].

%\bibitem{BLS} T. Bhattacharyya, S. Lata and H. Sau, {\em{Admissible fundamental operators}}, J. Math. Anal. Appl. \textbf{425} (2015), no. 2, 983-1003.

\bibitem{B-P-SR} T. Bhattacharyya, S. Pal and S. Shyam Roy, {\em{Dilations of $\Gamma$-contractions
by solving operator equations}}, Adv. Math. \textbf{230} (2012) 577-606.

%\bibitem{sir and me} T. Bhattacharyya and H. Sau, {\em{$\Gamma$-unitaries, dilation and a natural example}}, Publ. Res. Inst. Math. Sci. 53 (2017), 261-285.
%\bibitem{sir and me1} T. Bhattacharyya and H. Sau, Explicit and unique construction of tetrablock unitary dilation in a certain case,  Complex Anal. Oper. Theory, 10 (2016), 749-768.
%\bibitem{Cole-Wermer}B. J. Cole and J. Wermer, {\em{And\^o's theorem and sums of squares}}, Indiana Univ. Math. J., 48(1999), 767-791.
%%\bibitem{bhat-me realization} T. Bhattacharyya and H. Sau, {\em{Holomorphic functions on the symmetrized bidisk - realization, interpolation and extension}}, available at http://arxiv.org/pdf/1511.08962.pdf


\bibitem{dBR1} L. de Branges and J.\ Rovnyak, {\em Canonical models in quantum scattering theory}, in:
Perturbation Theory and its Applications in Quantum Mechanics (Proc.\ Adv.\ Sem.\ Math.\ Res.\ Center, U.S.\ Army,
Theoret.\ Chem.\ Inst., Univ.\ of Wisconsin, Madison, Wis., 1965),  pp. 295--392, Wiley, New York, 1966.

\bibitem{dBR2}  L. de Branges and J.\ Rovnyak, {\em Square Summable Power Series}, Holt, Rinehart and
Winston, New York-Toronto, Ont.-London, 1966


\bibitem{D-S} B. K. Das and J. Sarkar, {\em{Ando dilations, von Neumann inequality, and distinguished varieties}}, J. Funct.
Anal. \textbf{272} (2017), no. 5, 2114-2131.


\bibitem{D-S-S} B. K. Das, S. Sarkar and J. Sarkar, {\em{Factorizations of contractions}}, Adv. Math. \textbf{322} (2017), 186-200.

\bibitem{Doug-Dilation} R. G. Douglas, {\em{Structure theory for operators. I.}}, J. Reine Angew. Math. \textbf{232} (1968) 180-193.
\bibitem{Douglas} R. G. Douglas, {\em{On majorization, factorization and range inclusion of operators on Hilbert space}}, Proc. Amer. Math. Soc. \textbf{17} (1996), 413-415.

%\bibitem{DMP} R. G. Douglas, P. S. Muhly and C. Pearcy, \textit{Lifting commuting operators}, Michigan
%Math. J., 15 (1968), 385-395.
%\bibitem{FO-FR} C. Foias and A. Frazho, {\em{The Commutant Lifting Approach To Interpolation Problems}}, Operator Theory: Advances and Applications, Vol 44, Birkhauser Verlag, Basel, 1990.
%\bibitem{Gadidov}R. Gadidov, {\em{Choice sequences and Ando dilations}} An. Univ. Craiova Mat. Fiz.-Chim. 15 (1987), 14-18.
%\bibitem{Li-Ti}W. S. Li, D. Timotin, {\em{The central Ando dilation and related orthogonality properties}}, J. Funct. Anal. 154 (1998), 1-16.
%\bibitem{Nagy-Foias} B. Sz.-Nagy, C. Foias, H. Bercovici and L. Kerchy, {\em{Harmonic Analysis of
%Operators on Hilbert Space}}, Second edition, Revised and enlarged edition, Universitext,
%Springer, New York, 2010.

\bibitem{sz-nagy}B.  Sz.-Nagy, {\em{Sur les contractions de l'espace de Hilbert}}, Acta Sci. Math. \textbf{15} (1953), 87-92.

%\bibitem{Parrott} S. Parrott, {\em{Unitary dialtions for commuting contractions}}, Pacific J. math., 34(1970), 481-490.
%\bibitem{Popescu}G. Popescu, {\em{And\^o dilations and inequalities on noncommutative varieties}}, J. Funct. Anal., 272(2017), 3669-3711.
%\bibitem{sau} H. Sau, {\em {A note on tetrablock contractions}}, New York J. Math. \textbf{21} (2015) 1347-1369.
\bibitem{sauAndo} H. Sau, {\em{And\^{o} dilations for a pair of commuting contractions: two explicit constructions and functional models}}, arXiv:1710.11368 [math.FA].
%\bibitem{sfr} J.J. Sch\"affer, {\em{On unitary dilations of contractions}}, Proc. Amer. Math. Soc. 6 (1955), 322. MR 16,934c.
\bibitem{Nagy-Foias} Sz.-Nagy, B.; Foias, C.; Bercovici, H.; K$\acute{\text{e}}$rchy, L. {\em{Harmonic Analysis of Operators on Hilbert space}}, Revised and enlarged edition, Universitext, Springer, New York, 2010.
%\bibitem{vonN-Wold}J. von. Neumann, {\em{Allgemeine Eigenwerttheorie Hermitescher Funktionaloperatoren,}} Math. Ann. \textbf{102} (1930), 49-131.

\bibitem{NV1}  N.K.\ Nikolskii and V. Vasyunin, {\em Notes on two function models}, in: The Bieberbach Conjecture
(West Lafayette, Ind., 1985),  pp. 113--141,  Math.\ Surveys Monogr. \textbf{21}, Amer.\ Math.\ Soc., Providence,
RI, 1986.


\bibitem{NV2}  N.K.\ Nikolskii and V.\ Vasyunin. {\em A unified aproach to function models and the transcription
problem}, The Gohberg Anniversary Collection, Vol. II (Calgary, AB. 1988) pp.\ 405--434,
Oper.\  Theory Adv.\ Appl.\  \textbf{41}, Birkh\"auser, Basel, 1989.

%%\bibitem{von-N} J. von Neumann, {\em{Eine Spectraltheorie f$\ddot{\text u}$r allgemeine Operatoren eines unit$\ddot{\text a}$ren Raumes}}, Math. Nachr. 4 (1951), 258-281.
%\bibitem{Wold} H. Wold, {\em{A study in the analysis of stationary time series,}}, 2nd edition, Stokholm, 1954.
\end{thebibliography}
\end{document}